\documentclass[letterpaper,11pt,leqno]{article}
\usepackage{paper}
\usepackage[colorlinks=True, citecolor=blue, urlcolor=blue, linkcolor=blue, plainpages=false]{hyperref}
\usepackage{comment}
\newcommand{\blue}[1]{{\textcolor{blue}{#1}}{}}

\hypersetup{pdftitle={Minimalist LaTeX Template for Academic Papers}}

\begin{document}

\title{Childcare Waitlists}

\author{Yueyang Zhong, Jun Li, Senthil Veeraraghavan 
%
\thanks{Yueyang Zhong: London Business School (yzhong@london.edu). Jun Li: The University of Michigan Ross School of Business (junwli@umich.edu). Senthil Veeraraghavan: The Wharton School at the University of Pennsylvania (senthilv@wharton.upenn.edu). 
We thank Elizabeth Myers and Carrie Schultz (Bright Horizons), Gretchen Preston, Heidi McFadden, Harmony Smargon, and Morgan Peterson (Gretchen’s House), Christine Snyder (University of Michigan Child and Family Care), Deborah Stanley-McAulay and Angela Reese (Yale University Employee Engagement \& Workplace Culture), Priyank Arora (University of South Carolina), and Shiliang Cui (Georgetown University) for their helpful discussions and valuable insights.
}}

\date{First Version: October 1, 2026}   


\begin{titlepage}
\maketitle

Parents seeking childcare often face long waitlists. At the same time, vacancies at childcare centers can remain unfilled for extended periods.
We trace this paradox to two persistent features of childcare services. First, families consider multiple child care options due to enrollment uncertainties, 
resulting in \emph{phantom} listings that remain on waitlists even after families have found care elsewhere or no longer need a slot. Second, filling a slot is time-consuming: centers learn whether a family is actively seeking enrollment only after making an offer and waiting for a response. As families pursue alternatives while they wait and their children age, these delays and feedback effects amplify the mismatch between demand and available capacity.

We study two operational remedies. First, we examine regular verification that removes phantom listings before offers are made, improving the efficiency of enrollment and reducing vacancies. Second, we study mixed-age classrooms, in which some slots designated for older children can be flexibly used by younger children. This flexibility increases utilization, shortens waits, and reduces wasted offers. Its benefits extend beyond conventional pooling: as children move between age groups, classrooms can be filled through internal flows rather than costly external search, further reducing enrollment delays. Together, verification and mixed use offer a way to expand effective capacity in early childhood classrooms, where regulatory and operational constraints make adding physical capacity particularly costly. 

\bigskip
\textbf{Key Words:} Childcare operations; Early childhood education; Phantom demand; Waitlist management; Mixed-age classroom 

\end{titlepage}

\section{Introduction} \label{sec:intro}

Across much of the developed world, working families struggle to secure childcare. Demand is substantial. In the United States, three in five children under the age of five---approximately 13 million---spend 23 to 27 hours per week in nonparental care, and 66 percent of mothers with preschool-age children are employed, up from 8 percent in 1940~\citep{treasury2023}. Yet, high demand coexists with idle capacity. National surveys find that most U.S. centers turn families away for lack of space while also struggling to fill vacancies as they arise~\citep{brown2025}. Industry reports and state-level studies document families stranded on waitlists for months or years, while programs remain under-enrolled and licensed capacity goes unused~\citep{haspel2026paradox,weca2025}. The mismatch is not endemic to the United States. The Canada-based non-profit early childhood educational organization, \emph{Centres de la Petite Enfance}, reports contacting 100 to 150 families in Quebec to enroll only 20 to 25 children, even as parents wait a year or more~\citep{CityNews2025GhostParents}. In Europe, comparable shortages and long waits have prompted reforms to the allocation of scarce toddler slots~\citep{CarlssonThomsen2015ChildCareAllocation}. Across the post-industrial economies of the world, families face long waits while centers struggle to fill idle capacity, a phenomenon referred to as the \emph{childcare paradox}~\citep{haspel2026paradox}. 

The paradox arises because childcare waitlists are ledgers of past requests rather than current enrollment intent. Many families evaluate various options for childcare due to uncertainty in availability and preferences---enlisting at multiple childcare centers, hiring a nanny, arranging care with relatives, relocating, or changing employment plans. Thus, a listing at a childcare center may go unused. We refer to stale entries that go unenrolled as \emph{phantom listings}.\footnote{The term ``phantom'' was first introduced in the supply chain literature to describe a distortion in observed demand signals. When a product is rationed, customers may inflate their orders to secure larger allocations and later cancel these excess orders once supply constraints ease; \citet{lee1997bullwhip} refer to these as \emph{phantom orders}. As a result, the observed demand signal becomes increasingly disconnected from true demand and less informative as it propagates upstream through the supply chain.} Joining a waitlist is nearly costless, and withdrawal is seldom required. Hence, families have little incentive to update their status, especially as remaining on the list preserves a valuable backup option. Centers often observe the number of families on the list, but cannot distinguish active listings from phantom listings until they extend offers. This informational friction creates a gap between observed demand and enrollable demand. Each offer to a phantom listing takes time to resolve and consumes administrative effort, leaving a licensed slot vacant while an active family farther down the waitlist continues to wait. Thus, the very listings that make a center appear oversubscribed can be the reason for under-enrollment. 

This mismatch is evidently costly on all sides. For centers, the financial consequences can be immediate. In the United States, most for-profit childcare facilities operate on thin margins, and even one or two months below full enrollment can erase those margins~\citep[p. 15]{treasury2023}. On the other hand, families cannot reliably infer how long they will wait or when they should apply. Much research has indicated that such uncertainty has impacted female labor force participation and household fertility decisions. 
According to the US census bureau survey, an estimated 1.3 million people left the workforce to support a young child~\citep{Mader2022ChildCare}. As childcare services are quite decentralized, policymakers face challenges in addressing and resolving mismatches. Local authorities in the United Kingdom use waitlist data to assess unmet demand and determine where and for which age groups additional childcare provision is needed~\citep{welshgov2026childcare}.  Phantom listings often mask true unmet demand and thus misallocate expansion and public-investment decisions.

The goal of our paper is to develop an analytical understanding of how phantom listings distort the childcare enrollment process and generate the phantom paradox of long waits coexisting with persistent slot vacancies. We identify implementable operational policies to remedy the paradox---that is, to increase slot utilization while shortening families' waiting times.

We model the enrollment process for a single age-specific classroom with multiple licensed slots. Family requests arrive according to a Poisson process and join a waitlist. When a slot becomes available, the center contacts families on the waitlist until an offer is accepted, with each offer taking time to resolve. Because the center cannot observe whether a listed family still seeks care, offers to phantom families create a delay between slot availability and successful enrollment. We analyze this mechanism in two stages. The baseline model assumes that each listed family remains active with a constant probability, isolating the informational friction. We then allow this probability to decline with wait, capturing families' increasing likelihood of securing alternative care while they wait. 

We propose two operational remedies, based on theory and conversations with practitioners. The \emph{waitlist verification} process relies on eliciting families' current enrollment intentions and culling the waitlist accordingly. Families who have secured alternative care may still value their listing, so verifications can be imperfect. We characterize the optimal verification frequency and the minimum degree of accuracy required for verification to be beneficial. 
As a second policy we propose \emph{mixed-age classrooms} that convert selected age-dedicated slots into flexible capacity. Specifically, we direct some of the flexible capacity to be moved to more-expensive slots, and let the benefits of the policy accrue through learning, developmental milestones and age-based progression. 

\subsection{Our Main Findings} Our analysis yields three main research findings.  First, we show that the phantom paradox arises from the subtle interaction between enrollment uncertainties and the time-consuming offer resolution process, especially when patience is costly. Since enrollment is a ``high-touch'' process, and families take enrollment decisions seriously, centers need to make sequential offers. Each offer to a family on the waitlist can delay enrollment based on their response, and perpetuates the gap between waitlists and actual enrollments. The issue is persistent. Neither attracting more applicants nor adding moderate capacity can eliminate this informational friction. When families make other plans and turn into phantom listings due to impatience, the phantom paradox is further amplified: each successful enrollment requires even more offers; the enrollment rate slows, and slot utilization falls. Paradoxically, waiting times also decline because the center clears the waitlist increasingly by removing phantom families rather than enrolling children. 

We find that waitlist verification can shorten the waitlist and improve waitlist informativeness, by reducing wasted offers and idle capacity. The optimal verification frequency balances the marginal value of culling the phantom listings against the operational cost of verification. We characterize the optimal policy. 
Although the verification process can be costly, and often imperfect (families are often undecided on the best options for their child), verification can be worthwhile and can lead to significant cost savings and utilization improvement.
In our calibration, optimal verification reduces total monthly cost by approximately 13\%, increases slot utilization by about 3.5 percentage points, and lowers the prevalence of phantom listings relative to no verification. 

Finally, we offer an operational rationale for efficacy of mixed-age classrooms, often proposed due to their learning outcomes. In particular, investment towards lower low-age classrooms (compared to preschool classrooms) can increase overall slot utilization, shorten waiting times, and reduce phantom listings. The value of mixed-age classrooms extends beyond conventional pooling, which allows unmet infant demand to use otherwise idle toddler capacity without an extended search process. We find that converting even 2 slots in a classroom to be flexible mixed-age classroom can reduce operational costs by more than 15\%. 

\subsection{Related Literature} \label{sec:literature}

\noindent\textbf{Institutional Childcare Design.}
The childcare-assignment literature studies how scarce places should be allocated across families and providers. \citet{kennes2014daycare} formulate centralized public daycare allocation as a dynamic matching problem; \citet{reischmann2021deferred} design a deferred-acceptance mechanism that coordinates decentralized provider offers; and \citet{kuno2023strategic} studies strategic waiting and reapplication in a centralized daycare system. These papers take a submitted set of applicants, preferences, and priorities as the objects of allocation and ask who should receive which place and when. Most closely related in setting, \citet{brown2025} show why providers facing largely fixed classroom costs and stochastic demand may rationally maintain expected demand above capacity. We study a complementary operational mechanism. Conditional on prices, licensed capacity, and a provider's recorded waitlist, we analyze the vacancy-to-enrollment process when some applications no longer represent current demand. Time-consuming offers to these phantom applicants can leave staffed capacity idle even when the recorded list is long.

The institutional distinction is central to our contribution. Canonical school-choice mechanisms are centralized and organized around a common assignment cycle~\citep{gale1962,abdulkadiroglu2003}, while dynamic waitlist models design priority, lottery, or information policies for a roster whose membership is taken as current~\citep{arnosti2020housing,leshno2022dynamic,anunrojwong2023information}. In contrast, in the decentralized, provider-specific childcare setting that motivates our model, families apply independently to multiple centers, openings arise throughout the year rather than at a common clearing date, and no market-wide event refreshes every provider's waitlist. 

\medskip
\noindent\textbf{Abandonment and Hidden Customer Status.}
Our analytical model is related to service systems in which customers become unavailable before service. Classical abandonment models attach a patience time to each customer; when it expires, the customer leaves the system~\citep{zenios1999modeling,garnett2002designing}. For instance, \citep{su2004patient} consider a transplant waitlist model that allows refusals. 

In our setting, by contrast, a family's demand may lapse after it secures care elsewhere while its application remains recorded, and the center observes this change only when it makes contact. The closest analogues are ticket queues. In a ticket queue, a customer may leave without surrendering the ticket, so the provider discovers the absence only when the ticket is called \citep{jennings2016comparisons,kuzu2019wait}. Recent work considers processing absent tickets with some delay cost  \citep{kerner2026strategic} and capacity loss associated with disengagement \citep{castellanos2025silent}.
Ticket and contact-center interactions are generally short and repeatable. In contrast in childcare settings, time-scales are longer and invariably of economic and social consequence. A childcare offer may remain unresolved for days, leaving a regulated and staffed slot idle, whereas an accepted offer commits that slot for months. Furthermore, children age during the time waitlists remain unresolved, and list staleness is endogenous to waiting time. Our analysis needs to address hidden status and also its interaction with slowness of offer resolution. 

\medskip
\noindent\textbf{Verification and Schedule Management.}
A related operations literature examines unreliable participation in pending transactions and services. For instance, appointment systems use scheduling, reminders, and cancellation policies to mitigate no-shows \citep{green2008reducing,liu2010dynamic}, hiring-pipeline models optimize offers when candidate acceptance is uncertain \citep{epstein2024selection}, and models study the value of identifying absent customers before their turns \citep{hanukov2021benefits}. In much of the research, interventions are typically tied to one particular appointment, candidate, or imminent service opportunity. Childcare centers instead maintain long-lived registries while openings arise throughout the year. Our verification policy periodically audits the entire registry before vacancies occur, allows inaccuracy or concealing of status, and endogenizes the audit frequency. 

\medskip
\noindent\textbf{Process flexibility and Development Milestones.}
The mixed-age classroom policy is related to the expansive literature on process flexibility and resource pooling. Flexible capacity hedges against demand-capacity mismatch by allowing a common resource to serve multiple demand classes \citep{jordan1995principles,van1998investment}, and  managing flexible links \citep{bassamboo2012little}. Typically, demand classes are generally exogenous, and flexibility creates value by reallocating current capacity across them. In our setting, age classes are also connected by an endogenous, one-directional educational, intellectual and developmental transition. For instance, assigning a flexible slot to an infant not only serves current infant demand but can later generate a known-status move-up into the toddler classroom. 
Mixed-age flexibility therefore creates both conventional pooling value and a dynamic search-avoidance value. We thus extend the process-flexibility literature, with a tweak, to operations with progressive development in classes.

In sum, we examine childcare waitlists as a distinct operational design problem and show that they can be improved using better operations, information and capacity policies.

\section{A Model of Childcare Enrollment} \label{sec:model}

We model the enrollment process for a single age-specific classroom---infant, toddler, or preschool---at one childcare center. Centers manage each classroom's capacity separately because licensing rules, staffing requirements, and physical space constraints vary by age group.\footnote{For examples of state-specific rules governing age-based staff-to-child ratios, group sizes, and physical space, see Michigan Administrative Code R 400.8121 and R 400.8222 and Virginia Administrative Code 8VAC20-780-310 and 8VAC20-780-350 \citep{michigan2025childcare, virginia2026childcare}.} For example, an available infant slot cannot ordinarily be used to enroll a preschooler. We model one such classroom and its waitlist, tracing how an available slot becomes an enrollment.

\subsection{Licensed Slots, Family Requests, and the Offer Process} \label{subsec:primitives}

The enrollment process has three parts: licensed slots, family requests, and the offer process by which the center converts an available slot into a successful enrollment.

\paragraph{Licensed Slots.}
The classroom has $K$ licensed slots, each of which can be occupied by one enrolled child. An occupied slot becomes available when the enrolled child advances to the next classroom upon reaching the relevant age or developmental milestone, such as crawling, walking, or potty training, or when the family leaves the center for another reason, such as relocation or a change in parental employment. We use $\delta$ to represent how frequently either event makes one occupied slot available. A lower $\delta$ means that slots become available less often, and in other words, children remain enrolled longer, whereas a higher $\delta$ means that slots become available more often, and in other words, children remain enrolled for less time. Specifically, we assume that a child's enrollment duration is exponentially distributed
with rate $\delta>0$, 
so a child occupies a slot for $1/\delta$ time units on average. If all $K$ slots are occupied, slots become available at the aggregate rate $\mu := K\delta$, which is the fastest rate at which the classroom can generate openings.

\paragraph{Family Requests.}
Families typically contact childcare centers to request enrollment. New requests arrive at the center according to a Poisson process with rate $\lambda>0$. Each request is recorded on an uncapped waitlist. The center initiates an offer process whenever a slot is available. We consider families seeking the same type of slot.\footnote{In practice, families may need different childcare schedules, such as morning versus afternoon childcare or five-day versus three-day enrollment. The model in this paper considers one schedule and slot type.}

A family on the waitlist is \emph{active} if they still need childcare at the center and would accept an offer, and \emph{phantom} if they no longer seek childcare at the center and therefore would not accept one. We use the term phantom because their name remains on the waitlist even though they are no longer interested in enrolling. 
They may have enrolled at another center, hired a nanny, turned to a relative, relocated, or changed their work plans. Staying on the waitlist is often costless and, more importantly, preserves a valuable backup option. Families that no longer need childcare at the center therefore have little incentive to withdraw from the waitlist. Crucially, the center cannot observe a family's status---active or phantom---from its records and can only learn it by making contact.

Let $w \geq 0$ denote the time a family has spent on the waitlist. If the center contacts the family after $w$ units of time, they are active (that is, they accept the offer) with probability $p(w)$ and phantom (that is, they do not accept it) with probability $1-p(w)$. We assume that
\begin{equation} \label{eq:p_general}
    p(w) = p_0 \cdot e^{-\theta w}, \qquad p_0 \in (0,1],\; \theta \geq 0,
\end{equation}
where $p_0$ is the probability that a family would accept if contacted immediately after joining the waitlist. The acceptance probability decreases exponentially at rate $\theta$ because families are more likely to secure childcare through outside options as they wait longer. Even after securing outside childcare, they remain on the waitlist until the center contacts them.

\paragraph{Offer Process.} 

The offer process begins whenever a slot becomes available. The center selects a family from the waitlist and extends an offer. In practice, many centers operate sibling priority policies, affiliation or sponsor policies, community based policies, or need based priority policies alongside the order in which requests arrived. In this paper, we abstract from family preferences over slots and from the aforementioned priority categories, and assume a random-order-of-service (ROS) discipline throughout. The model and its main results, however, may extend to other work-conserving sequencing policies adopted by the center. 

Extending an offer does not necessarily lead to a successful enrollment. Families invariably take time to decide whether to accept, to decline, or to arrange a start date, and some never respond at all. In childcare centers, this decision typically takes days, so an offer remains unresolved for a period. We model this delay by assuming that the offer-resolution time is exponentially distributed with rate $\gamma>0$, so each offer takes $1/\gamma$ time units on average to reach a resolution.

If the family is active, they accept and the slot becomes occupied. If the family is phantom, the offer expires and is effectively wasted, in which case the center removes the family from the waitlist and turns to another one. The center incurs additional staff and administrative cost $s>0$ for a wasted offer. 

In Table~\ref{tab:notation}, we summarize the notation and its practical interpretation in childcare settings. 

\begin{table}[ht!]
  \centering
  \caption{Model notation and childcare interpretation.}
  \label{tab:notation}
  \small
  \begin{tabular}{@{}p{0.20\textwidth}p{0.75\textwidth}@{}}
    \toprule
    Notation & Interpretation in childcare settings \\
    \midrule
    \multicolumn{2}{@{}l}{\textit{Licensed slots and availability}}\\
    $K$ & Number of licensed slots in an age-specific classroom.  \\
    $\delta$ & Rate at which one enrolled child ages into the next classroom or leaves the center; hence, $1/\delta$ is the average time a child occupies a slot. \\
    $\mu=K\delta$ & Rate at which slots become available when all $K$ slots are occupied; equivalently, the classroom's maximum rate of slot availability.\\
    \midrule
    \multicolumn{2}{@{}l}{\textit{Family requests and status}}\\
    $\lambda$ & Average inflow of new family requests for childcare in a classroom. \\
    $w$ & Time a family has spent on the waitlist when the center contacts them. \\
    $p_0$ & Probability that a family would accept a slot if contacted immediately after joining the waitlist, before additional waiting creates further outside options. \\
    $\theta$ & Rate at which waiting families resolve their childcare need elsewhere while they remain on the waitlist. \\
    $p(w)=p_0e^{-\theta w}$ & Probability that a family contacted after waiting $w$ time units still needs childcare at the center and accepts the slot. \\
    $\alpha$ &  Steady-state average probability that a contacted family is active and accepts an offer.\\
    \midrule
    \multicolumn{2}{@{}l}{\textit{Offer process and waitlist maintenance}}\\
    $\gamma$ & Rate at which an offer is resolved through acceptance, refusal, or non-response; hence, $1/\gamma$ is the average elapsed time per offer before resolution. \\
    $s$ & Staff and administrative cost of an unsuccessful offer to a phantom family. \\
    \bottomrule
  \end{tabular}
\end{table}

\subsection{Enrollment Delays}\label{subsec:unique_feature}

Childcare enrollment has a distinct feature central to our analysis: a slot can remain open for an extended period without producing an enrollment. In many operations settings, such as call centers, newly available service capacity can often be put to use immediately. In childcare, however, a center must wait for each offer to be resolved (either accepted or declined) before the slot can be reassigned. Because offers made to phantom families on the waitlist do not lead to enrollment, they introduce delays between slot availability and successful enrollment. Consequently, despite a large number of families waiting, enrollment may require a sequence of offers, each adding further delay to the process. This delay leaves licensed slots vacant while active families further down the waitlist keep waiting.

\paragraph{Wasted Offers.}

To quantify the delay, consider a simple setting in which each contacted family accepts with the same probability $p$, independently of how long they have waited (i.e., $\theta=0$ in~\eqref{eq:p_general}). Filling one slot then requires $1/p$ offers on average. Of these offers, a fraction $(1-p)$ are made to phantom families and therefore do not result in enrollment. Hence, the expected number of wasted offers per successful enrollment is $(1-p)/p$, incurring a cost of $s \cdot (1-p)/p$ in wasted staff and administrative effort. 
In addition, each offer takes $1/\gamma$ time units on average to resolve. Thus, the average time from a slot becoming available to a child being enrolled is $1/(p\gamma)$.

Each slot therefore repeats a cycle with two stages. It is occupied for $1/\delta$ time units on average, and it then stands open for $1/(p\gamma)$ time units on average while the center works down the waitlist. Dividing the open stage by the length of the full cycle, the share of time that a slot stands open is
\begin{equation} \label{eq:open_share}
    \frac{1/(p\gamma)}{1/\delta + 1/(p\gamma)} \;=\; \frac{\delta}{p\gamma + \delta}. 
\end{equation}
We make two observations from~\eqref{eq:open_share}. First, the share is strictly positive whenever offers take time to resolve (i.e., $1/\gamma>0$), so children are enrolled more slowly than slots become available. Second, it is a property of one slot on its own and involves neither $K$ nor $\lambda$, so a center cannot shrink it by licensing a larger classroom or by attracting more requests.

Slots become available independently of one another, so a slot can open while the center is still running the offer process for a slot that opened earlier, and several offer processes can be under way at the same time. Applying~\eqref{eq:open_share} to each of the $K$ slots, the classroom has $K\delta/(p\gamma+\delta)$ open slots on average at any moment.

\paragraph{Waitlist Clearance.}

The waitlist is cleared as the status of the families on the waitlist is resolved through the offer process. During the process, the center learns the status of a family to be either phantom (in which case the family is removed from the waitlist) or active (in which case the child is enrolled). Each open slot has one offer outstanding, and each outstanding offer is resolved at rate $\gamma$. Thus, multiplying the average number of open slots by the rate at which each of them resolves offers, families leave the waitlist at the rate
\begin{equation} \label{eq:clearing_rate}
    \mu_{\mathrm{eff}} \;:=\; \gamma \cdot \frac{K\delta}{p\gamma + \delta} \;=\; \frac{K\gamma\delta}{p\gamma + \delta},
\end{equation}
which we call the \emph{effective clearance rate}. It counts every family the center takes off the list in the long run; in particular, a fraction $p$ of those removals are enrollments, and the remaining fraction $1-p$ are phantom families who have already arranged childcare elsewhere. The waitlist therefore shrinks far faster than children are enrolled, which is the reason the length of the waitlist says little about how many children the center actually enrolls. Children are enrolled at the rate $p\cdot\mu_{\mathrm{eff}} = \mu \cdot p\gamma/(p\gamma+\delta)$, strictly below the rate $\mu$ at which slots become available. This implies that the classroom enrolls fewer children than its licensed capacity and its turnover would allow.

Note that $\lambda$ does not affect the clearance process but plays a role in the size of the waitlist. When $\lambda < \mu_{\mathrm{eff}}$, the waitlist empties from time to time. We model the time from joining the waitlist to receiving an offer as an exponential random variable with rate $\mu_{\mathrm{eff}}-\lambda$. In a demand-rich childcare environment, $\lambda > \mu_{\mathrm{eff}}$, the waitlist often grows unboundedly. Families that face longer waiting times are more likely to resort to other outside options for their childcare need. In our model, this choice behavior corresponds to decreasing $p$ in $w$, an extension we discuss in \S\ref{subsec:impatience}. 

\noindent\textbf{An Illustrative Example.} We partner with a locally owned network of childcare centers in Michigan and use its operations to inform our parameter choices. A typical infant classroom has $K=10$ licensed slots, and children remain enrolled for approximately twelve months before moving up to the toddler room or leaving the center, so $\delta = 1/12$ per month. Families are given one week to respond to an offer before the center moves on to another family, so an offer is resolved in a quarter of a month on average and $\gamma = 4$ per month. Even with early offers, families may decline enrollment for various reasons. Families may prefer another center (and eventually withdraw), decline after a tour, or find that the fees, schedule, or start dates do not exactly meet their needs. Based on observations, we set the initial active probability to $p_0=0.38$. 

Filling one slot takes $1/p \approx 2.6$ offers, of which $(1-p)/p \approx 1.6$ are wasted on families who have already arranged childcare elsewhere. Because each offer takes a week to resolve, the classroom needs $1/(p\gamma) \approx 0.66$ months, less than three weeks, to turn an available slot into an enrollment. Equation~\eqref{eq:open_share} $\delta/(p\gamma+\delta) \approx 0.05$ implies that a slot is open $5$ percent of the time. Across the classroom, that amounts to $K\delta/(p\gamma+\delta) \approx 0.5$ of the ten licensed slots sitting vacant at any moment, each with an offer outstanding.

The waitlist clears at $\mu_{\mathrm{eff}} \approx 2.1$ families per month. Yet only $38$ percent of these removals result in enrollment, implying an enrollment rate of $p\cdot\mu_{\mathrm{eff}} \approx 0.79$ children per month (approximately $9.5$ per year), and the remaining $(1-p) \cdot \mu_{\mathrm{eff}} \approx 1.3$ monthly removals are phantom families.

\medskip

The example demonstrates the delay from a slot becoming available to a successful enrollment. Childcare centers incur significant costs as they work through long waitlists before finding a family that accepts an offer. In the next section, we show how even short delays in the offer process can generate and exacerbate a paradox commonly observed in childcare centers: families endure long waits even as childcare slots remain open.


\section{The Phantom Paradox in Childcare Services} \label{sec:paradox}

We begin with a simple version of the model in \S\ref{subsec:phantom_paradox}, in which the acceptance probability does not depend on how long a family has waited, so that $p(w) = p$ for all $w$, which corresponds to setting $\theta = 0$ in~\eqref{eq:p_general}. \S\ref{subsec:impatience} then restores the dependence on waiting time.


\subsection{Underenrolled Classrooms and Long Waitlists} \label{subsec:phantom_paradox}

\begin{lemma}\label{lemma:paradox}
    Suppose $p(w) = p$ for all $w$. 
    \begin{enumerate}[label=(\roman*)]
        \item \textbf{The classroom is underenrolled.} The steady-state fraction of occupied licensed slots is
        \begin{equation} \label{eq:umax}
            U \;=\; \min\left\{\frac{\lambda p}{K\delta},\; \frac{p\gamma}{p\gamma+\delta}\right\} < 1.
        \end{equation}
        A positive fraction of slots can be empty at any moment, even with strictly positive waitlist length. 
        \item \textbf{Families experience long waiting times.} The expected waiting time for a family from joining the waitlist to receiving an offer is
        \begin{equation} \label{eq:queue_length}
            \mathbb{W} \;=\; \frac{p\gamma+\delta}{(K\gamma \delta - p\gamma \lambda - \delta \lambda)^{+}},
        \end{equation}
        with the convention that $\mathbb{W} = \infty$ when the denominator vanishes. 
        Only a fraction $p$ of listed families are active, so the waitlist overstates the number of active families.
    \end{enumerate}
\end{lemma}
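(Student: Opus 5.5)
We prove both parts by treating the waitlist as a single-server queue with arrival rate $\lambda$ and service rate equal to the effective clearance rate $\mu_{\mathrm{eff}}$ from~\eqref{eq:clearing_rate}. Since $p(w)=p$ does not depend on waiting time, each offer succeeds independently with probability $p$. This is what makes the single-slot renewal cycle of \S\ref{subsec:unique_feature} available.

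\textbf{Part (i).} Fix one licensed slot and view its history as an alternating renewal process. In the saturated regime, where the waitlist is never empty, the slot is occupied for an $\mathrm{Exp}(\delta)$ period. It is then open for a geometric number of offers, with mean $1/p$, each taking an $\mathrm{Exp}(\gamma)$ time to resolve. Thinning shows the open period is $\mathrm{Exp}(p\gamma)$. The renewal-reward theorem then gives a long-run occupied fraction of $p\gamma/(p\gamma+\delta)$, which is one minus~\eqref{eq:open_share}.

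In the non-saturated regime, enrollments can only come from active arrivals. Because the waitlist is stable, every arriving family is eventually contacted, so active families are enrolled at long-run rate $\lambda p$. Little's law on the occupied slots (arrival rate $\lambda p$, mean sojourn $1/\delta$) gives mean occupancy $\lambda p/\delta$, that is, a fraction $\lambda p/(K\delta)$ of the $K$ slots.

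Occupancy is limited both by this demand rate and by the saturated supply rate, which gives the minimum in~\eqref{eq:umax}. The strict inequality $U<1$ follows from $\delta>0$ and $\gamma<\infty$. This already yields the ``positive fraction of slots empty'' claim, because in the saturated case the waitlist is nonempty while the open fraction $\delta/(p\gamma+\delta)$ is positive.

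\textbf{Part (ii).} Following the modeling convention stated just before the lemma, we treat the time from joining the waitlist to receiving an offer as $\mathrm{Exp}(\mu_{\mathrm{eff}}-\lambda)$, which is the M/M/1 sojourn law. This gives $\mathbb{W}=1/(\mu_{\mathrm{eff}}-\lambda)^+$. Substituting $\mu_{\mathrm{eff}}=K\gamma\delta/(p\gamma+\delta)$ and clearing denominators produces~\eqref{eq:queue_length}, with $\mathbb{W}=\infty$ exactly when $\lambda\ge\mu_{\mathrm{eff}}$.

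For the statement about overstatement, we use that status is i.i.d.\ with probability $p$ and independent of position under ROS. Each listed family is therefore active with probability $p$, so the expected number of active families is $p$ times the list length.

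\textbf{Main obstacle.} The hardest step is justifying the reduction of the $K$ coupled open-slot processes to a single-server queue with constant rate $\mu_{\mathrm{eff}}$. The true clearance rate is $\gamma$ times the current number of open slots, and this number is state dependent and only averages to $K\delta/(p\gamma+\delta)$. We first argue that the stability threshold $\lambda<\mu_{\mathrm{eff}}$ is exact: in the saturated regime the slots evolve as independent two-state chains, so the long-run departure rate is exactly $\mu_{\mathrm{eff}}$. We then adopt the paper's exponential approximation for the delay distribution, stating it explicitly as the modeling assumption rather than deriving an exact M/M/1 equivalence.
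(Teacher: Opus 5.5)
Your proposal is correct and follows the paper's proof essentially step for step. Both use renewal--reward on a single slot's occupied/open cycle in the saturated regime, Little's law with enrollment rate $\lambda p$ in the stable regime, the stated modeling convention that the wait is $\mathrm{Exp}(\mu_{\mathrm{eff}}-\lambda)$ for part (ii), and i.i.d.\ status for the active-share claim. The one step the paper makes explicit is which branch of the minimum applies: it checks $\lambda<\mu_{\mathrm{eff}} \Leftrightarrow \lambda p/(K\delta) < p\gamma/(p\gamma+\delta)$, and you should include this one-line algebra to turn your ``occupancy is limited by both'' remark into a proof.
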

Lemma~\ref{lemma:paradox}(i) describes the fraction of open slots in the presence of family childcare requests. 
When the waitlist empties from time to time, every listed family is eventually made an offer and a fraction $p$ of them get enrolled. Each enrollment holds one of the $K$ slots for $1/\delta$ time units, which gives $U = \lambda p/(K\delta)$. When there are many requests, the waitlist does not empty out. Despite a large number of waiting requests, slots remain open because of the delays in the offer process. As shown in~\eqref{eq:open_share}, slots remain open for a fraction $\delta/(p\gamma+\delta)$ of time while waiting for enrollment to be completed. For the rest of the time, that is, $p\gamma/(p\gamma+\delta)$, slots are occupied. In both cases, slots are open for an extended period, so the licensed slot utilization $U$ is strictly less than $1$. 

Lemma~\ref{lemma:paradox}(ii) describes the expected wait time. The wait is governed by three distinct effects: how often a slot opens up, at rate $K\delta$; the time each family takes to resolve an offer, $1/\gamma$; and the number of phantom families who decline before an active one accepts, which depends on $p$. As more families request childcare slots (as $\lambda$ rises), the waitlist lengthens and can grow unboundedly.

The open slots and long waiting times observed in Lemma~\ref{lemma:paradox} lead us to our main proposition, explaining the phantom paradox often observed in the childcare sector. 

\begin{proposition}[Phantom Paradox]\label{prop:paradox}
    Empty slots and long waits coexist. The expected number of empty slots is $K\,(1 - U) \;=\; \max\left\{K - \frac{\lambda p}{\delta},\; \frac{K\delta}{p\gamma+\delta}\right\} \;\geq\; \frac{K\delta}{p\gamma+\delta} \;>\; 0$, and the expected waiting time until receiving an offer satisfies $\mathbb{W} \geq 1/\mu_{\mathrm{eff}} > 0$. As requests arrive more frequently, the expected waiting time grows unboundedly while the number of empty slots never falls below $K\delta/(p\gamma+\delta)$.
\end{proposition}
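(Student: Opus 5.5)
The proposition follows directly from Lemma~\ref{lemma:paradox}, so the plan is to take its formulas for $U$ and $\mathbb{W}$ and do three things in turn.

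\textbf{Empty slots.} Multiply $1-U$ by $K$. Since $1-\min\{a,b\}=\max\{1-a,1-b\}$, equation~\eqref{eq:umax} gives
\[
K(1-U)=\max\Bigl\{K-\tfrac{\lambda p}{\delta},\; K-\tfrac{Kp\gamma}{p\gamma+\delta}\Bigr\}.
\]
The second entry simplifies to $K\delta/(p\gamma+\delta)$. A maximum is at least each of its entries, so $K(1-U)\ge K\delta/(p\gamma+\delta)$. This bound is strictly positive because $K,\delta>0$ and $p\gamma+\delta<\infty$. It also does not depend on $\lambda$, which settles the claim that the number of empty slots never falls below $K\delta/(p\gamma+\delta)$ however frequent requests become.

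\textbf{Waiting time.} First rewrite the denominator in~\eqref{eq:queue_length}:
\[
K\gamma\delta-\lambda(p\gamma+\delta)=(p\gamma+\delta)(\mu_{\mathrm{eff}}-\lambda),
\]
using $\mu_{\mathrm{eff}}=K\gamma\delta/(p\gamma+\delta)$ from~\eqref{eq:clearing_rate}. This gives $\mathbb{W}=1/(\mu_{\mathrm{eff}}-\lambda)^+$, which matches the exponential wait with rate $\mu_{\mathrm{eff}}-\lambda$ described in \S\ref{subsec:unique_feature}. Then split into two cases.
\begin{itemize}
\item If $\lambda<\mu_{\mathrm{eff}}$, then $0<\mu_{\mathrm{eff}}-\lambda\le\mu_{\mathrm{eff}}$, so $\mathbb{W}\ge 1/\mu_{\mathrm{eff}}>0$.
\item If $\lambda\ge\mu_{\mathrm{eff}}$, then $\mathbb{W}=\infty$ by the stated convention, and the bound holds trivially.
\end{itemize}
Unbounded growth follows from monotonicity. $\mathbb{W}$ is increasing in $\lambda$ on $[0,\mu_{\mathrm{eff}})$ and diverges as $\lambda\uparrow\mu_{\mathrm{eff}}$; for $\lambda\ge\mu_{\mathrm{eff}}$ it is infinite.

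\textbf{Coexistence.} The empty-slot bound holds for every $\lambda$, and the waiting-time bound holds alongside it. In the regime $\lambda\ge\mu_{\mathrm{eff}}$ the two statements are simultaneous. There waits are infinite, or at least long, while $K(1-U)=K\delta/(p\gamma+\delta)>0$. This case needs a short check that it really lands in the second branch of the max. That is immediate: $\lambda\ge\mu_{\mathrm{eff}}$ gives $\lambda p/(K\delta)\ge p\gamma/(p\gamma+\delta)$ after substituting $\mu_{\mathrm{eff}}$, so the min in~\eqref{eq:umax} is attained by the capacity term.

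\textbf{Main obstacle.} No step is hard given Lemma~\ref{lemma:paradox}. The only care needed is the algebraic identification of the denominator in~\eqref{eq:queue_length} with $(p\gamma+\delta)(\mu_{\mathrm{eff}}-\lambda)$, together with the $(\cdot)^+$ and $\infty$ convention at the boundary.
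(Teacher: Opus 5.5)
Your proposal is correct and follows essentially the same route as the paper. Both use the identity $1-\min=\max$ for the empty slots, the two-case bound $\mathbb{W}=1/(\mu_{\mathrm{eff}}-\lambda)^+\ge 1/\mu_{\mathrm{eff}}$, and monotonicity in $\lambda$ (with $\mu_{\mathrm{eff}}$ independent of $\lambda$) for the unbounded growth. Your explicit factoring of the denominator, and your check that the maximum is attained on the capacity branch when $\lambda\ge\mu_{\mathrm{eff}}$, are minor additions that the paper leaves implicit in the proof of Lemma~\ref{lemma:paradox}.
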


Proposition~\ref{prop:paradox} formalizes the simultaneous existence of open slots and waitlists, which has been widely described in the press and industry journals as the phantom paradox~\citep{haspel2026paradox, brown2025, weca2025}.  

To explain this paradox, we consider a center without phantom listings on the waitlist, which means every contacted family would accept the offer. There, the offer process is significantly shorter because of certainty in enrollment. For families, the waitlist length is truly informative of the expected time to receive an offer. Hence, a long waitlist can be interpreted as evidence of slot scarcity and a signal of true demand for childcare. 

The existence of phantom families severs the information link between the true demand and the waitlist. 
Furthermore, there is invariably a delay in confirming the status of families (active or phantom). Unfortunately, this delay compounds when there are multiple phantom families. As a result, slots remain open during the offer process.

As the demand for childcare grows, so does the number of phantom families on any waitlist.  The waitlist burgeons while the slot utilization increases only marginally, as the center has to go through multiple phantom families before a successful enrollment. Thus, an arbitrarily long waitlist can exist even when the classroom is not fully utilized. 

\noindent\textbf{Phantom Paradox: Numerical Illustration (Example Revisited).} We revisit the example in \S\ref{sec:model}. Recall that the effective clearance rate is $\mu_{\mathrm{eff}} \approx 2.1$ families per month. Suppose new requests arrive at rate $\lambda = 4.85$ per month (approximately weekly), which exceeds the clearance capacity more than twofold. The expected wait $\mathbb{W}$ therefore grows without bound. Slot utilization, however, is capped at $p\gamma/(p\gamma+\delta) \approx 0.95$: on average $9.5$ of the ten licensed slots are occupied and $0.5$ stand empty even as many families are on the waitlist ready for enrollment.
Our model explains how phantom entries on the waitlist can generate situations that are commonly described as a paradox: an unbounded waitlist coexists with idle classroom capacity. 


\begin{lemma}\label{lemma:comp_statics}
    Suppose $p(w) = p$ for all $w$ and $\mathbb{W} < \infty$. Then $\partial U/\partial\gamma = 0$. 
\end{lemma}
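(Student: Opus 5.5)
The plan is to reduce the statement to the closed form in Lemma~\ref{lemma:paradox}(i) and to show that, under the hypothesis $\mathbb{W}<\infty$, the minimum in \eqref{eq:umax} is attained by its first term, which does not involve $\gamma$.

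\textbf{Step 1: translate the hypothesis.} By \eqref{eq:queue_length}, $\mathbb{W}<\infty$ holds exactly when the denominator is strictly positive:
\[
K\gamma\delta - p\gamma\lambda - \delta\lambda > 0 .
\]
Equivalently, $\lambda(p\gamma+\delta) < K\gamma\delta$. Dividing by $p\gamma+\delta>0$ gives $\lambda < K\gamma\delta/(p\gamma+\delta) = \mu_{\mathrm{eff}}$, using \eqref{eq:clearing_rate}.

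\textbf{Step 2: identify the binding term.} Multiplying $\lambda<\mu_{\mathrm{eff}}$ by $p/(K\delta)>0$ yields
\[
\frac{\lambda p}{K\delta} < \frac{p\gamma}{p\gamma+\delta}.
\]
So the minimum in \eqref{eq:umax} equals $U=\lambda p/(K\delta)$. This is the stable regime the paper describes after Lemma~\ref{lemma:paradox}: the waitlist empties from time to time, every listed family is eventually contacted, and throughput is demand-limited.

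\textbf{Step 3: differentiate.} Fix $(\lambda,p,K,\delta)$ and any $\gamma$ with $\mathbb{W}<\infty$. The condition in Step 1 is strict and continuous in $\gamma$, so it holds on an open neighbourhood of $\gamma$. On that neighbourhood $U=\lambda p/(K\delta)$ is constant in $\gamma$, hence $\partial U/\partial\gamma=0$.

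The only point needing care is Step 3. The expression $\min\{\cdot,\cdot\}$ is not differentiable where its two arguments are equal. That boundary coincides exactly with $\mathbb{W}=\infty$ (a vanishing denominator), which the hypothesis excludes. The strict inequality therefore guarantees that the first branch is active on an open set, which justifies taking the derivative. Interpretively, faster offer resolution shortens waits but cannot raise utilization when all demand is already being served.
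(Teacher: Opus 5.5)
Your proposal is correct and follows the same route as the paper's proof: translate $\mathbb{W}<\infty$ into $\lambda<\mu_{\mathrm{eff}}$, conclude that the first branch $U=\lambda p/(K\delta)$ of \eqref{eq:umax} applies, and note that it does not depend on $\gamma$. Your open-neighbourhood remark justifying the derivative is a small extra rigor step that the paper leaves implicit.
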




Lemma~\ref{lemma:comp_statics} highlights a \emph{practical} insight. Enforcing quicker decisions on offers 
does not necessarily translate into improved slot utilization at the center, as long as there is a delay between slot availability and enrollment. In other words, the phantom paradox persists. The intuition is as follows: In the stable regime, every active arrival is eventually enrolled, so faster offer resolution only brings enrollment forward but does not increase the volume of active demand served. 



\subsection{When Longer Waits Reduce Acceptance} 
\label{subsec:impatience}

So far, we have assumed that the probability $p$ that a family accepts a slot is constant. 
In practice, however, families that have waited for a longer period are more likely to have considered other options and secured care elsewhere and thus become phantom listings. Hence, we now allow the probability that a family accepts an offered slot to decrease with waiting time ($\theta > 0$ in~\eqref{eq:p_general}). This extension endogenizes phantom prevalence. Waiting times and acceptance probability are now intertwined. Our analysis reveals that the interaction between waiting time and phantom prevalence \emph{amplifies} the phantom paradox identified in \S\ref{subsec:phantom_paradox}.

The preceding analysis extends to wait-dependent acceptance by replacing the constant probability $p$ with its steady-state average counterpart. We now calculate the corresponding unconditional acceptance probability, denoted by $\alpha$, when families have different acceptance probabilities and are randomly picked from the waitlist.
When a slot becomes available, the center makes an offer to a family who has waited for $w$ units of time on the waitlist. The family accepts with probability $p(w) = p_0 \cdot e^{-\theta w}$. If the offer is declined, the center contacts another family, drawn at random from those still listed under the ROS discipline, so the waiting time of a contacted family is a draw from the steady-state distribution of $w$. In the long run, the fraction of offers that are accepted is characterized by the \emph{average acceptance probability} 
\begin{equation} \label{eq:alpha_def}
    \alpha \;:=\; \mathbb{E}_w\bigl[p(w)\bigr] \;=\; p_0 \cdot \mathbb{E}_w\bigl[e^{-\theta w}\bigr].
\end{equation}
Consequently, each enrollment requires $1/\alpha$ offers on average, of which $(1-\alpha)/\alpha$ are unsuccessful offers to phantom families.\footnote{Note that accept/reject outcome is governed by the elapsed wait time incurred by the family that is reached. Hence, successive offers need not be independent or identically distributed. The offer count $1/\alpha$ is the long-run time-average across all enrollments; see the proof of Theorem~\ref{thm:fixed_point}.} Because each offer takes $1/\gamma$ time units on average to resolve, the average time from a slot becoming available to a child being enrolled is $1/(\alpha \gamma)$. For any given $\alpha$, a slot behaves in the long run as in the baseline model with $p=\alpha$. In particular, the steady-state fraction of open slots is now $\delta/(\alpha\gamma+\delta)$, and the effective clearance rate is $\mu_{\mathrm{eff}}(\alpha) = K\gamma\delta/(\alpha\gamma+\delta)$. As in \S\ref{subsec:primitives}, the offer process depletes the waitlist at the long-run mean rate $\mu_{\mathrm{eff}}(\alpha)$. We therefore assume that the waiting time from joining the waitlist to receiving an offer is exponential with rate $\mu_{\mathrm{eff}}(\alpha)-\lambda$, and has infinite mean otherwise.

Similar to Lemma~\ref{lemma:paradox}, the following result characterizes slot utilization and expected waiting time. 

\begin{lemma}\label{lemma:alpha-u-wait}
For a given average acceptance probability $\alpha$, the steady-state fraction of occupied licensed slots and the expected waiting time for a family from joining the waitlist to receiving an offer are  
\begin{equation} \label{eq:alpha_u}
    U(\alpha) \;=\; \min\left\{\frac{\lambda\alpha}{K\delta},\; \frac{\alpha\gamma}{\alpha\gamma+\delta}\right\}, 
\end{equation}
and
\begin{equation} \label{eq:alpha_wait}
    \mathbb{W}(\alpha) \;=\; \frac{\alpha\gamma+\delta}{(K\gamma\delta - \alpha\gamma\lambda - \delta\lambda)^{+}}.
\end{equation}
\end{lemma}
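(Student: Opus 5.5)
The plan is to reduce Lemma~\ref{lemma:alpha-u-wait} to Lemma~\ref{lemma:paradox} with $p$ replaced by $\alpha$. The key observation, already made in the text preceding the statement, is that for a fixed $\alpha$ the long-run behavior of each slot coincides with the baseline model at $p=\alpha$. So the first step is to justify that substitution. Fix $\alpha$ and use a renewal-reward argument on a single slot. Each cycle consists of an occupied phase of mean $1/\delta$ followed by an open phase. The open phase consists of a random number $N$ of offers, each of mean length $1/\gamma$ and independent of outcomes. Although successive offers are not i.i.d., the long-run fraction of accepted offers equals $\alpha$ by the definition~\eqref{eq:alpha_def}. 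Hence the long-run average of $N$ per enrollment is $1/\alpha$, and by Wald-type averaging the long-run mean open phase is $1/(\alpha\gamma)$. This yields the open share $\delta/(\alpha\gamma+\delta)$, the analogue of~\eqref{eq:open_share}, and the clearance rate $\mu_{\mathrm{eff}}(\alpha)=K\gamma\delta/(\alpha\gamma+\delta)$, the analogue of~\eqref{eq:clearing_rate}.

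Next I would derive $U(\alpha)$ by splitting into two regimes.

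\emph{Saturated regime} ($\lambda\ge\mu_{\mathrm{eff}}(\alpha)$). The waitlist never empties, so every slot cycles without idle-empty periods. The occupied fraction is then $1/\delta$ divided by $1/\delta+1/(\alpha\gamma)$, which gives $\alpha\gamma/(\alpha\gamma+\delta)$.

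\emph{Stable regime} ($\lambda<\mu_{\mathrm{eff}}(\alpha)$). Every arriving family is eventually contacted, and a long-run fraction $\alpha$ of them enroll. Enrollments therefore occur at rate $\lambda\alpha$, and each holds a slot for mean $1/\delta$. Little's law gives the mean number of occupied slots as $\lambda\alpha/\delta$, so the occupied fraction is $\lambda\alpha/(K\delta)$.

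To combine the regimes, note that $\lambda\alpha/(K\delta)\le \alpha\gamma/(\alpha\gamma+\delta)$ holds exactly when $\lambda\le\mu_{\mathrm{eff}}(\alpha)$. The two expressions therefore agree at the boundary, and the minimum formula~\eqref{eq:alpha_u} covers both cases.

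For $\mathbb{W}(\alpha)$, I would use the modeling assumption stated just before the lemma. The time to an offer is exponential with rate $\mu_{\mathrm{eff}}(\alpha)-\lambda$ when this rate is positive, and has infinite mean otherwise. The mean is $1/(\mu_{\mathrm{eff}}(\alpha)-\lambda)$. Multiplying numerator and denominator by $\alpha\gamma+\delta$ gives $(\alpha\gamma+\delta)/(K\gamma\delta-\alpha\gamma\lambda-\delta\lambda)$. The positive-part convention in~\eqref{eq:alpha_wait} encodes the infinite case.

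The main obstacle is the first step. One must justify that the mean open phase is $1/(\alpha\gamma)$ and that enrollments occur at rate $\lambda\alpha$ even though acceptance outcomes depend on the waiting times of the contacted families and are therefore correlated across offers. I would handle this with long-run time averages rather than i.i.d. assumptions. Count offers and acceptances over $[0,t]$. By definition of $\alpha$ as the steady-state contact-weighted mean of $p(w)$, the ratio of acceptances to offers tends to $\alpha$ almost surely. Offer durations are exponential$(\gamma)$ and independent of outcomes, so the total open time per enrollment tends to $1/(\alpha\gamma)$. The remaining steps then follow exactly as in the proof of Lemma~\ref{lemma:paradox}.
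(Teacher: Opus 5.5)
Your proposal follows essentially the same route as the paper. It uses the same two-regime split, with Little's law giving $\lambda\alpha/(K\delta)$ in the stable case and occupied-versus-open time accounting giving $\alpha\gamma/(\alpha\gamma+\delta)$ in the saturated case; the same $\min$ combination; and the same exponential-wait modeling assumption for $\mathbb{W}(\alpha)$.

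The paper's only refinement is to do the time-average bookkeeping jointly over all $K$ slots, via $N(t)=\gamma(Kt-H(t))+o(t)$ and $\alpha N(t)=\delta H(t)+o(t)$. That way it never needs each individual slot's realized acceptance frequency to equal $\alpha$, which your single-slot framing tacitly assumes. The paper also uses a martingale strong law, rather than appeal to the definition, to identify the realized acceptance frequency with $\mathbb{E}_w[p(w)]$.
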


Unlike in the constant-$p$ model, however, $\alpha$ is endogenous. Because $p(w)$ decreases with $w$, Equation~\eqref{eq:alpha_def} is a fixed-point equation: the steady-state distribution of $w$ is induced by $\alpha$, while $\alpha$ is itself determined by that distribution. 

\begin{theorem}\label{thm:fixed_point}
    The average acceptance probability $\alpha^*$ is the unique solution to~\eqref{eq:alpha_def}, equivalently, the unique solution to
    \begin{equation} \label{eq:alpha_fp}
        \alpha \;=\; \frac{p_0}{1 + \theta\,\mathbb{W}(\alpha)},
    \end{equation}
    and is strictly less than $p_0$.
\end{theorem}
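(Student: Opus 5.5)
The proof has three steps: reduce the fixed-point equation \eqref{eq:alpha_def} to the scalar form \eqref{eq:alpha_fp}, show that this scalar equation has exactly one root, and then bound that root. For the first step I use the modelling assumption stated just before Lemma~\ref{lemma:alpha-u-wait}. In the stable regime the waiting time $w$ until a family is contacted is exponential with rate $\mu_{\mathrm{eff}}(\alpha)-\lambda$. Its mean is therefore $\mathbb{W}(\alpha)$ from \eqref{eq:alpha_wait}, since $\mathbb{W}(\alpha)=1/(\mu_{\mathrm{eff}}(\alpha)-\lambda)$ after clearing the factor $\alpha\gamma+\delta$.

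For an exponential variable with mean $m$, the Laplace transform is $\mathbb{E}[e^{-\theta w}]=1/(1+\theta m)$. Substituting into \eqref{eq:alpha_def} gives $\alpha=p_0/(1+\theta\mathbb{W}(\alpha))$. If $\mu_{\mathrm{eff}}(\alpha)\le\lambda$, then $w=\infty$ almost surely, so $e^{-\theta w}=0$ for $\theta>0$. The convention $\mathbb{W}=\infty$ then gives right-hand side $0$, so the two formulations agree in both regimes. For the ergodic (long-run) interpretation of $\alpha$ as the fraction of accepted offers, I would invoke a renewal-reward argument over enrollment cycles. This justifies the footnote's remark that offers need not be i.i.d.

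For uniqueness I define $g(\alpha)=\alpha-p_0/(1+\theta\mathbb{W}(\alpha))$ on $[0,p_0]$. From \eqref{eq:alpha_wait}, the denominator $K\gamma\delta-\alpha\gamma\lambda-\delta\lambda$ is decreasing in $\alpha$ and the numerator $\alpha\gamma+\delta$ is increasing. So $\mathbb{W}(\alpha)$ is nondecreasing on $[0,p_0]$: finite and strictly increasing up to the threshold $\bar\alpha=(K\gamma\delta-\delta\lambda)/(\gamma\lambda)$, and infinite beyond it. Hence the map $h(\alpha)=p_0/(1+\theta\mathbb{W}(\alpha))$ is nonincreasing, so $g$ is strictly increasing. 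This alone gives at most one root.

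For existence, $g(0)=-p_0/(1+\theta\mathbb{W}(0))\le 0$, with equality only if $\mathbb{W}(0)=\infty$. In that case $\mathbb{W}(\alpha)=\infty$ for all $\alpha\ge 0$, and $\alpha^*=0$ is the root. Also $g(p_0)=p_0\theta\mathbb{W}(p_0)/(1+\theta\mathbb{W}(p_0))\ge 0$. The main obstacle is that $h$ jumps to $0$ at $\bar\alpha$, so a plain intermediate value theorem argument could fail across that discontinuity. I handle it by case analysis on whether $\bar\alpha$ lies in $(0,p_0)$:
\begin{itemize}
\item On $[0,\bar\alpha)$, the map $h$ is continuous and tends to $0$ as $\alpha\uparrow\bar\alpha$, because $\mathbb{W}\to\infty$ there.
\item Hence $g(\alpha)\to\bar\alpha>0$ as $\alpha\uparrow\bar\alpha$, while $g(0)<0$ when $\bar\alpha>0$.
\item The intermediate value theorem on $[0,\bar\alpha)$ therefore yields a root in the stable region.
\item If instead $\bar\alpha\ge p_0$, then $g$ is continuous on all of $[0,p_0]$ and the endpoint signs above suffice.
\end{itemize}

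Strict monotonicity of $g$ then gives uniqueness. Finally, $\alpha^*<p_0$ follows because $\theta>0$ and $\mathbb{W}(\alpha^*)\ge 1/\mu_{\mathrm{eff}}(\alpha^*)>0$, so $1+\theta\mathbb{W}(\alpha^*)>1$. In the degenerate case $\alpha^*=0$, we have $0<p_0$ directly.
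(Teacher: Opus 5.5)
Your argument is correct and follows the paper's proof. You reduce \eqref{eq:alpha_def} to $p_0/(1+\theta\mathbb{W}(\alpha))$ via the exponential Laplace transform, with value $0$ in the unstable regime, then note that $\alpha$ minus this map is strictly increasing and apply the intermediate value theorem on $[0,p_0]$, using $\theta>0$ for the strict bound. Two small points differ from the paper. First, there is no jump at $\bar\alpha$: as your own first bullet shows, the map tends to $0$ there, so it is continuous on $[0,p_0]$, and the paper applies the intermediate value theorem directly without a case split. Second, your renewal-reward aside over enrollment cycles would not work as stated, because those cycles are not i.i.d.\ here; this is why the paper instead uses a martingale strong law in the proof of Lemma~\ref{lemma:alpha-u-wait}.
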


Theorem~\ref{thm:fixed_point} shows that impatience strictly lowers the acceptance probability seen by the center. A family offered a slot immediately upon joining the waitlist accepts with probability $p_0$. As the wait lengthens, families are more likely to secure care elsewhere. Consequently, the equilibrium average acceptance probability is strictly below its initial level, namely, $\alpha^* < p_0$. This inequality underpins the amplified phantom paradox.

\begin{proposition}[Amplified Phantom Paradox] \label{prop:amplified_paradox}
    Relative to the constant-$p$ benchmark with $p=p_0$, wait-dependent acceptance shortens average waiting time but lowers slot utilization: 
    \begin{align*}
        \mathbb{W}(\alpha^*) \leq \mathbb{W}(p_0), \qquad U(\alpha^*) < U(p_0).
    \end{align*}
\end{proposition}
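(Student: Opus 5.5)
The plan is to treat both inequalities as monotonicity statements in $\alpha$, using the closed forms of Lemma~\ref{lemma:alpha-u-wait}, and then to invoke Theorem~\ref{thm:fixed_point}, which gives $\alpha^* < p_0$. Since the constant-$p$ benchmark with $p=p_0$ is exactly the $\alpha=p_0$ case of \eqref{eq:alpha_u}--\eqref{eq:alpha_wait}, both comparisons reduce to showing how $U(\alpha)$ and $\mathbb{W}(\alpha)$ vary on $(0,p_0]$.

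\textbf{Waiting time.} Write $\mathbb{W}(\alpha) = (\alpha\gamma+\delta)/(K\gamma\delta - \lambda(\alpha\gamma+\delta))^{+}$. On the region where the denominator is positive, set $x=\alpha\gamma+\delta$. Then $\mathbb{W} = x/(K\gamma\delta - \lambda x)$, whose derivative in $x$ is $K\gamma\delta/(K\gamma\delta-\lambda x)^2 > 0$, so $\mathbb{W}$ is strictly increasing in $\alpha$. The denominator is itself decreasing in $\alpha$, so the stable region $\{\alpha : K\gamma\delta > \lambda(\alpha\gamma+\delta)\}$ is a down-set in $\alpha$.

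This gives three cases:
\begin{enumerate}
\item If $\mathbb{W}(p_0)=\infty$, the inequality is trivial.
\item If $\mathbb{W}(p_0)<\infty$, then $\alpha^*<p_0$ lies in the stable region, and strict monotonicity gives $\mathbb{W}(\alpha^*) < \mathbb{W}(p_0)$.
\item The non-strict ``$\leq$'' covers the case where both waits are infinite, i.e.\ $\alpha^*$ still lies outside the stable region. Here I would check via \eqref{eq:alpha_fp} whether that case can occur. If $\mathbb{W}(\alpha)=\infty$, then \eqref{eq:alpha_fp} forces $\alpha=0$ when $\theta>0$, which is consistent only if $\mathbb{W}(0)=\infty$, i.e.\ $\lambda\delta\ge K\gamma\delta$. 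Whichever way this resolves, ``$\leq$'' holds.
\end{enumerate}

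\textbf{Utilization.} Both arguments of the minimum in \eqref{eq:alpha_u}, namely $\lambda\alpha/(K\delta)$ and $\alpha\gamma/(\alpha\gamma+\delta) = 1 - \delta/(\alpha\gamma+\delta)$, are strictly increasing in $\alpha$. A pointwise minimum of strictly increasing functions is strictly increasing, so $\alpha^*<p_0$ yields $U(\alpha^*)<U(p_0)$.

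\textbf{Main obstacle.} The delicate point is the degenerate case $\alpha^*=0$, which is possible only if the fixed point is pushed to zero when $\mathbb{W}=\infty$. I would first use Theorem~\ref{thm:fixed_point}'s uniqueness together with the observation that the right-hand side of \eqref{eq:alpha_fp} is positive whenever $\mathbb{W}(\alpha)<\infty$. This places $\alpha^*\in(0,p_0)$ in the generic case. The boundary case then needs no separate argument: $U(0)=0<U(p_0)$ and the waiting-time inequality is non-strict, so both conclusions hold there as well.
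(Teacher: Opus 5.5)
Your proposal is correct and follows the paper's proof essentially step for step. Utilization falls because both branches of the minimum in \eqref{eq:alpha_u} are strictly increasing in $\alpha$ and Theorem~\ref{thm:fixed_point} gives $\alpha^*<p_0$; the waiting-time comparison combines the positive derivative of \eqref{eq:alpha_wait} on the stable region with a case split on stability. The only difference is bookkeeping: you settle every case with $\mathbb{W}(p_0)=\infty$ at once, whereas the paper splits it further and uses Lemma~\ref{lemma:alpha-star} to obtain a strict inequality when $\mu_{\mathrm{eff}}(p_0)\le\lambda<K\gamma$, which the non-strict statement does not need.
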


Proposition~\ref{prop:amplified_paradox} shows that the average time families spend on the waitlist becomes shorter when the acceptance probability decreases with waiting time. While this is an improvement for families that manage to get enrolled, this improvement comes at the expense of many more phantom listings: those phantom families on the waitlist are no longer active and end up with (possibly worse) outside options. Due to the increased number of phantom listings, the slot utilization at the center worsens. Thus, the decline in waiting time for the enrolled families does not necessarily indicate improved waitlist management for the center. Since $\alpha^*<p_0$, each successful enrollment at the center now requires contacting more families on average ($1/\alpha^*>1/p_0$), increasing both the effort and the time required to fill an available slot. Hence, waitlist impatience reduces slot utilization, namely, $U(\alpha^*) < U(p_0)$. 

Endogenous impatience thus amplifies the phantom paradox: the center enrolls fewer children, leaves more slots vacant, and wastes more offers on phantom families. We illustrate this amplification using our example. 

\noindent\textbf{Amplified Phantom Paradox: Numerical Illustration (Example Revisited).}
We revisit the Michigan infant-classroom example with $K=10$, $\delta=1/12$, $\gamma=4$, $\lambda=4.85$, and $p_0=0.38$, and introduce wait-dependent acceptance by setting $\theta=0.075$ per month. Under this specification, a family's acceptance probability falls by half after approximately $\ln(2)/\theta=9$ months on the waitlist. Solving the fixed-point equation~\eqref{eq:alpha_def} yields an equilibrium average acceptance probability of $\alpha^*\approx0.15$, well below $p_0$. Each enrollment now requires about $6.7$ offers, the classroom enrolls $0.72$ children per month, and the waitlist holds about $100$ names, $85$ percent of them phantom, with a family waiting $21$ months on average before it is contacted. The enrollment rate matches that of our partner network's infant rooms, which enroll roughly $0.75$ children per month, and the offer count is of the same order as the eight or so offers they make per enrollment. Table~\ref{tab:waitdep_comparison} compares system performance under the constant-$p$ benchmark and wait-dependent acceptance. We note that the numerical observations mirror Proposition~\ref{prop:amplified_paradox}.

\begin{table}[ht]
\centering
\caption{Wait-dependent acceptance worsens system performance metrics. Parameters are $K=10$, $\delta=1/12$, $\gamma=4$, $\lambda=4.85$, $p_0=0.38$, and $\theta = 0.075$.}
\label{tab:waitdep_comparison}
\small
\begin{tabular}{@{}lrrr@{}}
    \toprule
    Metric 
    & Constant $p$ 
    & Wait-dependent acceptance 
    & Change \\
    \midrule
    Average acceptance probability 
        & 0.38 
        & 0.15 & $-61\%$ \\
    Offers per enrollment 
        & 2.6 
        & 6.7 & $+150\%$ \\
    Slot utilization $U$ 
        & 0.95 
        & 0.87 & $-8\%$ \\
    Enrollment rate $K\delta U$ (per month)
        & 0.79 
        & 0.72 & $-8\%$ \\
    Average number of empty slots $K(1-U)$ 
        & 0.5 
        & 1.3 & $+150\%$ \\
    \bottomrule
\end{tabular}
\end{table}


Before we explore potential ways to address the phantom paradox, we conclude the section with some insightful comparative statics. 
\begin{lemma}\label{lemma:alpha_cs}
    $\partial\alpha^*/\partial\lambda \leq 0$, $\partial\alpha^*/\partial\theta \leq 0$, and $\partial\alpha^*/\partial\gamma \geq 0$. Moreover, $\partial \mathbb{W}(\alpha^*)/\partial \gamma \leq 0$ and $\partial U(\alpha^*) / \partial \gamma \geq 0$.
\end{lemma}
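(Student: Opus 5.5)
Our plan is to apply implicit differentiation to the fixed-point characterization of Theorem~\ref{thm:fixed_point}. Define
\[
F(\alpha;\lambda,\theta,\gamma) \;:=\; \alpha\bigl(1+\theta\,\mathbb{W}(\alpha)\bigr) - p_0 ,
\]
so that $\alpha^*$ is the unique root of $F$ in $(0,p_0)$.

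We first isolate the monotonicity structure of $\mathbb{W}(\alpha)$ from Lemma~\ref{lemma:alpha-u-wait}. On the stable region $K\gamma\delta > \lambda(\alpha\gamma+\delta)$:
\begin{itemize}
\item the numerator $\alpha\gamma+\delta$ increases in $\alpha$;
\item the denominator $K\gamma\delta-\alpha\gamma\lambda-\delta\lambda$ decreases in $\alpha$;
\item hence $\mathbb{W}$ is strictly increasing in $\alpha$.
\end{itemize}
It follows that $\alpha\mapsto\alpha(1+\theta\mathbb{W}(\alpha))$ is strictly increasing: it is the product of two positive increasing factors. This gives $\partial F/\partial\alpha>0$, which also underlies uniqueness.

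The unstable region, where $\mathbb{W}=\infty$, needs separate treatment. There the right side of \eqref{eq:alpha_fp} is $0$, so the root must lie where $\mathbb{W}<\infty$. I will argue this using the monotone structure: the left side $\alpha$ increases, while $p_0/(1+\theta\mathbb{W}(\alpha))$ decreases weakly in $\alpha$. Comparative statics then follow from a monotone-crossing argument rather than literal derivatives at kinks. This handles $\theta=0$ and boundary cases as well.

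Next we sign the parameter effects on $\mathbb{W}$ at fixed $\alpha$.
\begin{itemize}
\item $\mathbb{W}$ is increasing in $\lambda$, since the denominator falls.
\item $\theta\mathbb{W}$ is increasing in $\theta$.
\item For $\gamma$, write $\mathbb{W}=(\alpha\gamma+\delta)/\bigl(\gamma(K\delta-\alpha\lambda)-\delta\lambda\bigr)$. Differentiating in $\gamma$, the numerator of the derivative is
\[
\alpha\bigl(\gamma(K\delta-\alpha\lambda)-\delta\lambda\bigr)-(\alpha\gamma+\delta)(K\delta-\alpha\lambda) \;=\; -\delta K\delta \;<\;0 .
\]
So $\mathbb{W}$ is decreasing in $\gamma$ for fixed $\alpha$.
\end{itemize}
In each case the curve $p_0/(1+\theta\mathbb{W}(\cdot))$ shifts down for $\lambda$ and $\theta$, and up for $\gamma$. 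Since it crosses the increasing line $\alpha$ exactly once, the crossing point moves accordingly. This gives $\partial\alpha^*/\partial\lambda\le0$, $\partial\alpha^*/\partial\theta\le0$ and $\partial\alpha^*/\partial\gamma\ge0$. Equivalently, $d\alpha^*/dx=-F_x/F_\alpha$.

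The last two claims need more care; this is the main obstacle.

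For $\mathbb{W}(\alpha^*)$, the total derivative in $\gamma$ has a direct negative effect and an indirect positive effect through $\alpha^*$, since $\alpha^*$ rises and $\mathbb{W}$ increases in $\alpha$. I would avoid the sign battle by using the fixed point itself: $\theta\mathbb{W}(\alpha^*)=p_0/\alpha^*-1$. Because $\alpha^*$ is nondecreasing in $\gamma$, the quantity $\mathbb{W}(\alpha^*)$ is nonincreasing in $\gamma$ whenever $\theta>0$. When $\theta=0$, $\alpha^*=p_0$ and the claim reduces to the direct effect already computed.

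For $U(\alpha^*)=\min\{\lambda\alpha^*/(K\delta),\ \alpha^*\gamma/(\alpha^*\gamma+\delta)\}$, both branches are nondecreasing in $\alpha$. The second branch is also increasing in $\gamma$ directly. Since $\alpha^*$ is nondecreasing in $\gamma$, each branch is nondecreasing in $\gamma$, and so is their minimum. This argument is robust to which branch is active, and it reconciles with Lemma~\ref{lemma:comp_statics} in the $\theta=0$ case.
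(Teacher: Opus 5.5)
Your proposal is correct and follows essentially the paper's route. Like the paper, you shift the fixed-point map $p_0/(1+\theta\mathbb{W}(\alpha))$ in each parameter and use its unique crossing with the increasing line $\alpha$; your direct computation $\partial\mathbb{W}/\partial\gamma<0$ is equivalent to the paper's $\partial\mu_{\mathrm{eff}}/\partial\gamma>0$. You then read off $\mathbb{W}(\alpha^*)$ from $\theta\mathbb{W}(\alpha^*)=p_0/\alpha^*-1$, and your min-of-branches argument for $U(\alpha^*)$ is a slightly more robust substitute for the paper's use of Lemma~\ref{lemma:alpha-star} to identify the active branch.

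One small slip: when $\lambda\geq K\gamma$ the root is $\alpha^*=0$ with $\mathbb{W}(\alpha^*)=\infty$, so it lies neither in $(0,p_0)$ nor in the stable region as you assert. Your monotone-crossing argument with the extended map, and the extended-valued identity, still cover that case.
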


Lemma~\ref{lemma:alpha_cs} shows how acceptance probabilities shift with market conditions. 
The average acceptance probability decreases when there are more family requests, outside options are easier to secure, or offers are resolved more slowly. Higher demand and slower offer resolution lengthen waiting times, while more accessible outside options increase the likelihood that families secure care elsewhere before receiving an offer. Compared to the constant-$p$ case, more accessible outside options can shorten waiting times but decrease slot utilization. 

\subsection{Policies to Remedy the Phantom Paradox 
} \label{sec:policies} 

Our analysis reveals how the informational friction can lead to the phantom paradox. A center learns whether a listed family still wants a slot only by extending an offer, each offer takes time to resolve, and every offer made to a phantom listing holds a slot open without filling it. \S\ref{subsec:phantom_paradox} shows that the resulting idle capacity cannot be eliminated simply by attracting more requests or by licensing more slots. \S\ref{subsec:impatience} reveals that impatience further widens the gap, as the center wastes more offers on a list containing a higher share of phantom families.  We consider remedies informed by our model and practitioner conversations: learn active status of families before extending time consuming offers, and allow idle slots produced by the paradox in one classroom to serve families waiting for another classroom.  We propose two operational policies along these lines to counteract the inefficiency caused by phantom listings. 


The first, \emph{waitlist verification}, falls under a broad collection of actions that update the waitlist before the center makes offers. 
The second set of policies reconfigures the capacity allocation within the center (without touching the waitlist). Specifically, we focus on the strategy of \emph{mixed-age classrooms}, which reconfigure \emph{capacity} across age groups so that one group's idle slots can relieve another group's waitlist.

Both policies are implementable in practice, with the first policy acting on the informational friction and the second acting on the idle capacity. Another appeal is that neither relies on potentially controversial forms of differential treatment, such as price discrimination or priority classes based on family characteristics.

We analyze both policies under the same objective, which combines three sources of inefficiency---cost of wasted offers, cost of idle capacity, and cost of policy implementation: 
\begin{align} \label{eq:section6_cost}
    \mathrm{TC} \;=\; \underbrace{s \times (\text{offers to phantom listings})}_{\text{wasted offers}} \;+\; \underbrace{r \times (\text{empty licensed slots})}_{\text{idle capacity}} 
    \;+\; \underbrace{(\text{operational cost})}_{\text{policy}},
\end{align}
where $r$ is the contribution a filled slot earns, so that every slot standing empty costs the center $r$ per unit time in forgone margin. 

\section{Remedial Policy: Waitlist Verification} \label{sec:reverify}

Our model so far captures the fact that a center learns a family's status (active/phantom) only by making an offer, and that it makes offers only when slots open. The status of a family remains invisible until an offer is made to them. Without the status information, changing the order of contact only reshuffles who is reached first. 

One direct remedy is to verify families' status proactively rather than waiting until a slot opens and an offer is made. The center periodically contacts every listed family, asks whether they still want a slot, and removes those who say they do not. Periodic verification is often seen in allocation systems with high centralization or subject to formal oversight, such as affordable housing and organ registries \citep{arnosti2020housing, zhang2010sound, zenios1999modeling}. In contrast, the childcare sector is much less regulated and fairly decentralized. 

The key advantage of the verification policy is that the center verifies the status of listed families in advance according to its own schedule, rather than contacting families after the slot opens. As a result, when the slot opens, the center can enroll the verified families without the slot remaining idle for too long. Thus, the verification policy, as we proposed, is broader. For example, our verification policy allows for advance planning when departure/graduation of a child is expected. Suppose a center knows a child will age up in March. Then, it can begin its verification process a few months in advance (e.g., in January) and allot the slot for a pre-verified enrollment in March when the slot opens. 

Waitlist verification has two complications. The first is that verification is costly. Every listed family must be contacted, whether or not they turn out to be phantom in the future. Every such contact incurs staff and administrative cost, so the center must choose how often to ask families.

The second complication, as observed by practitioners, is that the responses can be unreliable (in the sense that the responses from the contacted families may not reflect their true status). Families may find being on the waitlist still valuable even if they do not wish to enroll immediately. For instance, a family that has arranged temporary childcare elsewhere still values a place on the waitlist, in case that arrangement fails or preferences change. For a family with uncertainty when it comes to care, saying no ahead of time forfeits a position on the waitlist that is free to hold. 

We therefore assume that asking does not always reveal the true status. In each round, families that are active on the waitlist confirm they are active. Families that are phantom confirm that they no longer need a spot at the center with probability $\phi \in (0,1]$, and otherwise remain listed. 
The benchmark $\phi = 1$ represents perfect revelation, and small $\phi$ describes a waitlist that cannot be cleared by asking families ahead of offers.

The center runs a verification round every $T > 0$ time units and incurs a cost of $c_{\nu} > 0$ for each family contacted. Write $\nu := 1/T$ for the verification frequency, so a larger $\nu$ means more frequent rounds. Since only a fraction $\phi$ of contacted phantom families is revealed, 
verification removes a listed phantom family at a rate of $\phi\nu$. The verification process operates as a second channel along with the offer process for curating the waitlist. Since $\phi \leq 1$, this curation is only partially effective. 

Throughout this section, we assume $\lambda<K\gamma$, ensuring finite wait, and $\lambda p_0 > K\gamma\delta/(\gamma+\delta)$, ensuring sufficient demand even as verification becomes arbitrarily frequent. Both inequalities hold in the numerical illustration.

\subsection{The Equilibrium Acceptance Probability Under Verification}

Suppose the waitlist is verified at frequency $\nu>0$, the center wastes fewer offers for each slot that opens. Let $\alpha(\nu)$ be the corresponding average acceptance probability and $\mathbb{W}(\nu)$ be the corresponding expected waiting time until a listed family receives an offer.\footnote{$\mathbb{W}(\nu)$ differs from the offer-process-only waiting time in~\eqref{eq:alpha_wait}, as it incorporates both channels of waitlist curation: offers and verification.} Then, every listed family receives an offer at rate $1/\mathbb{W}(\nu)$. When verification is more frequent (larger $\nu$), the number of families remaining on the waitlist decreases (as more phantom families are removed). However, the remaining families expect a shorter wait and may accept an offer with higher probability. Therefore, the equilibrium acceptance probability $\alpha^*(\nu)$ is a result of these two forces---reduced number of families remaining on the waitlist and increased acceptance probability of the remaining families. 

\begin{theorem}\label{thm:alpha_nu}
    For a verification frequency $\nu \geq 0$ and $\phi \in (0,1]$, the average acceptance probability $\alpha^*(\nu)$ is the unique solution to 
    \begin{equation} \label{eq:alpha_nu}
        \alpha \;=\; \frac{p_0\,\bigl(1 + \phi\nu\,\mathbb{W}(\nu)\bigr)}{1 + (\theta + p_0 \phi \nu) \, \mathbb{W}(\nu)}, 
    \end{equation}
    where $\mathbb{W}(\nu) \in (0,\infty)$ is the unique solution to 
    \begin{align}\label{eq:capacity_const-alt}
        \frac{\lambda p_0}{1+\theta \mathbb{W}(\nu)} \left(\frac{1}{\delta} + \frac{1}{\gamma}\right) + \frac{\lambda (1-p_0+\theta \mathbb{W}(\nu))}{(1+\theta \mathbb{W}(\nu))(1+\phi\nu\mathbb{W}(\nu))} \cdot \frac{1}{\gamma} = K. 
    \end{align}
    Furthermore, $\mathbb{W}(\nu)$ is strictly decreasing in $\nu$, and $\alpha^*(\nu)$ is strictly increasing in $\nu$, with $\alpha^*(0) = \delta(K\gamma-\lambda)/\lambda\gamma$ and $\lim_{\nu\to\infty}\alpha^*(\nu) = 1$.  
\end{theorem}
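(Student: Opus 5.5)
The plan is to derive both \eqref{eq:alpha_nu} and \eqref{eq:capacity_const-alt} from a flow-balance argument that follows a single arriving family, and then to read off uniqueness and monotonicity from the structure of the capacity equation. Following the convention of \S\ref{subsec:impatience}, I treat the time until a listed family receives an offer as exponential with rate $1/\mathbb{W}(\nu)$. Because $p(w)=p_0e^{-\theta w}$, an arriving family is active with probability $p_0$, and an active family turns phantom at rate $\theta$. A phantom family is removed by verification at rate $\phi\nu$.

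\textbf{Step 1 (deriving the two equations).} The derivation uses competing exponential clocks. A family is still active when its offer arrives with probability $p_0\cdot\frac{1/\mathbb{W}}{1/\mathbb{W}+\theta}=\frac{p_0}{1+\theta\mathbb{W}}$. Otherwise it is phantom, either from arrival or after turning phantom while waiting. A phantom family receives a (wasted) offer before verification removes it with probability $\frac{1}{1+\phi\nu\mathbb{W}}$. Multiplying by $\lambda$ gives the rate of active offers
\[
A=\frac{\lambda p_0}{1+\theta\mathbb{W}}
\]
and the rate of phantom offers
\[
\frac{\lambda(1-p_0+\theta\mathbb{W})}{(1+\theta\mathbb{W})(1+\phi\nu\mathbb{W})}.
\]
Under the standing assumption $\lambda p_0>K\gamma\delta/(\gamma+\delta)$ the waitlist never empties, so every slot is always either occupied or holding an outstanding offer. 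Little's law then says that slot-time consumed per unit time equals $K$. Each enrollment uses $1/\gamma+1/\delta$ of slot-time, and each phantom offer uses $1/\gamma$. This is exactly \eqref{eq:capacity_const-alt}. The average acceptance probability is the ratio $\alpha=A/O$, where $O$ is the total offer rate. After simplification this ratio is \eqref{eq:alpha_nu}.

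\textbf{Step 2 (uniqueness of $\mathbb{W}(\nu)$).} Set $f(\mathbb{W})=p_0/(1+\theta\mathbb{W})$, which is decreasing in $\mathbb{W}$. The left side of \eqref{eq:capacity_const-alt} becomes
\[
L(\mathbb{W},\nu)=\lambda f\Bigl(\tfrac1\delta+\tfrac1\gamma\Bigr)+\frac{\lambda}{\gamma}\cdot\frac{1-f}{1+\phi\nu\mathbb{W}}.
\]
Differentiating in $\mathbb{W}$ gives $f'\cdot\lambda\bigl[\tfrac1\delta+\tfrac1\gamma-\tfrac{1}{\gamma(1+\phi\nu\mathbb{W})}\bigr]$ plus a term that is nonpositive. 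Since $f'\le 0$ and the bracket is positive, $L$ is strictly decreasing in $\mathbb{W}$ (for $\theta>0$ or $\nu>0$).

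Now check the endpoints. At $\mathbb{W}=0$ we get $L=\lambda p_0/\delta+\lambda/\gamma$, which exceeds $K$ by the demand assumption. As $\mathbb{W}\to\infty$, $L\to 0$ if $\nu>0$, and $L\to\lambda/\gamma<K$ if $\nu=0$, using $\lambda<K\gamma$. Hence there is a unique root $\mathbb{W}(\nu)\in(0,\infty)$. Moreover $L$ is strictly decreasing in $\nu$ for fixed $\mathbb{W}$, so the root $\mathbb{W}(\nu)$ strictly decreases in $\nu$. Uniqueness of $\alpha^*(\nu)$ then follows from \eqref{eq:alpha_nu}.

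\textbf{Step 3 (monotonicity and limits of $\alpha^*$).} Write the capacity constraint as $A/\delta+O/\gamma=K$. This gives $O=\gamma(K-A/\delta)$, so
\[
\alpha=\frac{A}{\gamma(K-A/\delta)},
\]
which is strictly increasing in $A$. Since $A=\lambda f(\mathbb{W}(\nu))$ increases as $\mathbb{W}(\nu)$ decreases, $\alpha^*(\nu)$ is strictly increasing in $\nu$.

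At $\nu=0$ every listed family is eventually offered, so $O=\lambda$. Then $A=\lambda\alpha$, and the constraint reads $\lambda\alpha/\delta+\lambda/\gamma=K$, giving $\alpha^*(0)=\delta(K\gamma-\lambda)/(\lambda\gamma)$.

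As $\nu\to\infty$, $\mathbb{W}(\nu)$ decreases to a limit $\mathbb{W}_\infty$. This limit is strictly positive, because it solves $\lambda f(\mathbb{W}_\infty)(1/\delta+1/\gamma)=K$, which has a positive root by the demand assumption. Consequently $\phi\nu\mathbb{W}(\nu)\to\infty$, the phantom-offer rate tends to $0$, and $\alpha^*\to1$.

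The main obstacle is Step 1: justifying that the offer and verification channels can be treated as competing exponential clocks with a common rate $1/\mathbb{W}(\nu)$, and that all $K$ slots are continuously busy, so that Little's law applies with equality. I would handle this the same way the paper handles the fixed point in Theorem~\ref{thm:fixed_point}, namely as a long-run mean-rate approximation under ROS, and invoke the demand assumption to rule out an empty waitlist.
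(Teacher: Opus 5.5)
Your proposal is correct and is essentially the paper's proof: your competing-clock rates of active and phantom offers are exactly the paper's $Q_a(\nu)/\mathbb{W}(\nu)$ and $Q_p(\nu)/\mathbb{W}(\nu)$ from its active/phantom balance equations, and the monotonicity of the capacity function in $\mathbb{W}$ and in $\nu$, the representation $\alpha=A/\bigl(\gamma(K-A/\delta)\bigr)$, and the $\nu=0$ identity $O=\lambda$ all mirror the paper's steps. The one slip is in the limit: you justify $\mathbb{W}_\infty>0$ via the limiting equation, but that equation already presupposes $\phi\nu\mathbb{W}(\nu)\to\infty$. Argue instead, as the paper does, that the phantom term in \eqref{eq:capacity_const-alt} is positive, so $\frac{\lambda p_0}{1+\theta\mathbb{W}(\nu)}\bigl(\frac{1}{\delta}+\frac{1}{\gamma}\bigr)<K$ for every $\nu$, which bounds $\mathbb{W}(\nu)$ below, uniformly in $\nu$, by the positive root of that limiting equation.
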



The verification policy improves the average acceptance probability $\alpha^*(\nu)$, mainly by actively pruning the phantom families and changing the composition of the waitlist. We characterize the composition of the waitlist for any verification policy in Lemma~\ref{lemma:reverify_composition}. 
Let $Q_a(\nu)$ and $Q_p(\nu)$ denote the equilibrium number of active and phantom families on the waitlist. 
Since every listed family is equally likely to be reached by an offer, whatever its status, the average acceptance probability equals the chance that an offer reaches an active family on the waitlist, which can be written as $\alpha^*(\nu) = Q_a(\nu)/(Q_a(\nu) + Q_p(\nu))$. 

\begin{lemma}[Waitlist Composition and Slot Utilization] \label{lemma:reverify_composition}
    Fix a verification frequency $\nu \geq 0$ and $\phi \in (0,1]$. The expected numbers of active and phantom families on the waitlist are
    \begin{equation} \label{eq:AP_nu}
       Q_a(\nu) = \frac{\lambda p_0  \mathbb{W}(\nu)}{1+\theta \mathbb{W}(\nu)}, \qquad  Q_p(\nu) = \frac{\lambda \mathbb{W}(\nu) (1-p_0 + \theta \mathbb{W}(\nu))}{(1+\theta \mathbb{W}(\nu))(1+\phi\nu \mathbb{W}(\nu))},
    \end{equation}
    and the steady-state fraction of occupied licensed slots is 
    \begin{align}\label{eq:U}
        U(\nu) = \frac{\lambda p_0}{K\delta} \frac{1}{1 + \theta \, \mathbb{W}(\nu)}. 
    \end{align}
    Both $Q_a(\nu)$ and $Q_p(\nu)$ strictly decrease in verification frequency $\nu$, with $\lim_{\nu\to\infty} Q_p(\nu) = 0$, while $U(\nu)$ strictly increases in $\nu$. 
\end{lemma}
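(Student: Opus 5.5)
The plan is to derive the composition by a flow-balance (Little's law) argument on each family's time on the list, and then to read off monotonicity from Theorem~\ref{thm:alpha_nu}. Under ROS every listed family, whatever its status, receives an offer at rate $1/\mathbb{W}(\nu)$. A newly arrived family is active with probability $p_0$. An active family turns phantom at rate $\theta$, and a phantom family is removed by verification at rate $\phi\nu$. Both families are removed by an offer at rate $1/\mathbb{W}(\nu)$.

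\textbf{Active families.} An active family is removed (either by an offer or by turning phantom) at total rate $1/\mathbb{W}+\theta$. Its expected time as an active listing is therefore $\mathbb{W}/(1+\theta\mathbb{W})$. Little's law gives $Q_a=\lambda p_0\mathbb{W}/(1+\theta\mathbb{W})$. The probability that such a family is reached by an offer while still active is $1/(1+\theta\mathbb{W})$, so the enrollment rate is $\lambda p_0/(1+\theta\mathbb{W})$. Each enrollment occupies a slot for $1/\delta$, which gives $U(\nu)=\lambda p_0/(K\delta(1+\theta\mathbb{W}))$.

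\textbf{Phantom families.} A family becomes phantom either on arrival, with probability $1-p_0$, or by ageing before an offer, with probability $p_0\theta\mathbb{W}/(1+\theta\mathbb{W})$. The total inflow of phantom families is therefore
\[
\lambda\,\frac{1-p_0+\theta\mathbb{W}}{1+\theta\mathbb{W}}.
\]
A phantom family leaves at rate $1/\mathbb{W}+\phi\nu$, so its mean sojourn is $\mathbb{W}/(1+\phi\nu\mathbb{W})$. By memorylessness this holds regardless of when the family became phantom. Multiplying inflow by mean sojourn gives $Q_p$. As a consistency check, these expressions reproduce the capacity identity \eqref{eq:capacity_const-alt}. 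Its left-hand side counts busy slots plus slots with an outstanding offer: enrollments occupy $1/\delta+1/\gamma$ each, and phantom offers occupy $1/\gamma$ each. The check also confirms $\alpha^*=Q_a/(Q_a+Q_p)$ against \eqref{eq:alpha_nu}.

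\textbf{Monotonicity.} This follows from $\mathbb{W}(\nu)$ strictly decreasing, which is given by Theorem~\ref{thm:alpha_nu}. For $U(\nu)$: because $\theta\ge 0$ and $\mathbb{W}$ decreases, $U$ is nonincreasing in $\mathbb{W}$, hence nondecreasing in $\nu$. Strictness needs $\theta>0$; the $\theta=0$ case should be noted or excluded, since then $U$ is constant. For $Q_a$: the map $\mathbb{W}\mapsto\mathbb{W}/(1+\theta\mathbb{W})$ is strictly increasing, so $Q_a$ strictly decreases in $\nu$.

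\textbf{Main obstacle: $Q_p$.} Here $\nu$ enters both directly, through $1+\phi\nu\mathbb{W}$, and through $\mathbb{W}$. The product $\nu\mathbb{W}(\nu)$ need not be monotone. The plan is to use \eqref{eq:capacity_const-alt} to eliminate the troublesome factor. That identity gives
\[
\frac{\lambda(1-p_0+\theta\mathbb{W})}{(1+\theta\mathbb{W})(1+\phi\nu\mathbb{W})}=\gamma\Bigl[K-\frac{\lambda p_0}{1+\theta\mathbb{W}}\Bigl(\frac1\delta+\frac1\gamma\Bigr)\Bigr],
\]
so
\[
Q_p=\gamma\,\mathbb{W}\Bigl[K-\frac{\lambda p_0(1/\delta+1/\gamma)}{1+\theta\mathbb{W}}\Bigr].
\]
This is a function of $\mathbb{W}$ alone. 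It is a product of $\mathbb{W}$ and a bracket that is increasing in $\mathbb{W}$ and positive, the latter because the phantom term in \eqref{eq:capacity_const-alt} is positive. Hence $Q_p$ is strictly increasing in $\mathbb{W}$, and therefore strictly decreasing in $\nu$.

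\textbf{Limit of $Q_p$.} As $\nu\to\infty$, $\alpha^*\to1$ and $Q_a$ stays bounded. Then $Q_p=Q_a(1-\alpha^*)/\alpha^*\to0$. Alternatively, $\mathbb{W}(\nu)$ converges to a positive limit, which the standing assumption $\lambda p_0>K\gamma\delta/(\gamma+\delta)$ guarantees, so $\phi\nu\mathbb{W}\to\infty$ and the factor $1/(1+\phi\nu\mathbb{W})$ drives $Q_p$ to zero.
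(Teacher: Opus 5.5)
Your proposal is correct. Your derivation of $Q_a$, $Q_p$ and $U$ (Little's law on sojourn times) is the same computation as the paper's rate-balance equations $\lambda p_0=(\theta+1/\mathbb{W})Q_a$ and $\lambda(1-p_0)+\theta Q_a=(\phi\nu+1/\mathbb{W})Q_p$. The monotonicity arguments for $Q_a$ and $U$ are the paper's too.

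You diverge at the step you call the main obstacle, and the paper never meets it. Dividing the numerator and denominator of $Q_p$ by $\mathbb{W}$ gives $Q_p=\bigl(\lambda(1-p_0)+\theta Q_a\bigr)/\bigl(\phi\nu+1/\mathbb{W}(\nu)\bigr)$. The numerator is nonincreasing because $Q_a$ is. The denominator is strictly increasing because both $\phi\nu$ and $1/\mathbb{W}$ are. Strict monotonicity therefore follows without the capacity constraint and without ever forming the product $\nu\mathbb{W}(\nu)$.

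Your route instead uses \eqref{eq:capacity_const-alt} to write $Q_p=\gamma\mathbb{W}\bigl[K-(1+\gamma/\delta)e\bigr]$ with $e=\lambda p_0/(1+\theta\mathbb{W})$. This is valid: positivity of the bracket at each equilibrium point is enough to make the product strictly increasing over the range of $\mathbb{W}(\nu)$. It costs an extra identity, but it shows that $Q_p$ depends on $\nu$ only through $\mathbb{W}$, equivalently through throughput $e$. That is exactly the reduction the paper later uses in \eqref{eq:Qp-e} for Proposition~\ref{prop:optimal_review}.

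For the limit, rely on your second argument or on the paper's bound $Q_p\le\lambda/(\phi\nu)$, which follows from $\theta Q_a\le\lambda p_0$. The paper's own proof of $\lim_{\nu\to\infty}\alpha^*(\nu)=1$ in Theorem~\ref{thm:alpha_nu} goes through $Q_p\to 0$, so citing that limit here is circular in spirit.

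Your $\theta=0$ caveat is right but moot. Under the standing assumption $\lambda p_0>K\gamma\delta/(\gamma+\delta)$ with $\theta=0$, the active term of \eqref{eq:capacity_const-alt} alone exceeds $K$. No finite $\mathbb{W}(\nu)$ then exists, so $\theta>0$ is implicitly required already by Theorem~\ref{thm:alpha_nu}.
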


Lemma~\ref{lemma:reverify_composition} captures the essence of the verification policy: verification makes waitlists shorter and more informative. Verification removes only the phantom families from the waitlist.  Consequently, the expected number of phantom families $Q_p$ falls. 
Surprisingly, the expected number of active families $Q_a$ also falls, although they remain on the waitlist. This is because the verification process makes the enrollment process faster. Recall that, without the verification process, the center had to contact many families (that are phantom) before enrolling an active family. Due to verification, the center is able to reach an active family sooner (due to a larger proportion of active families on the waitlist), which reduces the expected wait times among families who eventually enroll. As a result of the faster enrollment process, the slot utilization, shown in~\eqref{eq:U}, also improves.

Earlier, we noted that long waitlists reduce enrollment commitment among families, leading to amplification of the phantom paradox in \S\ref{subsec:impatience}. This is because families who wait long without hearing back have to make alternative arrangements. 
The verification policy addresses this problem by shortening the waitlists. 
In practice, verification policy has the additional advantage of improving service through continued communication with the active families.  

Another lever to improve waitlist management is to improve the accuracy of the verification, i.e., a larger $\phi$. Note that a larger $\phi$ (that is, more verification accuracy) has an equivalent effect to a larger $\nu$ (that is, higher verification frequency). 


\subsection{The Optimal Verification Process}

The center incurs three sources of costs---wasted-offer cost, idle-capacity cost, and verification process cost. 
All three costs change with the verification frequency $\nu$. Offers still reach phantom families, though less frequently, at rate $Q_p(\nu)/\mathbb{W}(\nu)$, each costing $s$. Each empty slot costs the center $r$ per unit time, and verification reduces the number of such slots. And each round contacts every listed family at cost $c_{\nu}$, regardless of the accuracy. The center chooses a verification frequency $\nu \geq 0$ to minimize
\begin{equation} \label{eq:reverify_cost}
    \mathrm{TC}(\nu) \;=\; \underbrace{\frac{s\,Q_p(\nu)}{\mathbb{W}(\nu)}}_{\text{wasted offers}} \;+\; \underbrace{r K (1-U(\nu))}_{\text{idle capacity}} + \underbrace{c_\nu\,\nu\,\bigl(Q_a(\nu) + Q_p(\nu)\bigr)}_{\text{verification}}.
\end{equation}

\begin{proposition}[Optimal Verification Frequency] \label{prop:optimal_review}
    An optimal verification frequency $\nu^*$ exists and is unique.
    \begin{enumerate}[label=(\roman*)]
        \item Verification is beneficial, that is, $\nu^*>0$, if and only if the verification process is sufficiently accurate, namely, 
            \begin{equation} \label{eq:review_threshold}
                \phi \;>\; \phi_{\min} \;:=\; \frac{c_\nu}{V\,(1-\alpha^*(0))},
            \end{equation}
            where $V \;:=\; s + (r+s\delta) / \gamma$ is the value of removing one phantom family from the waitlist (that is, the wasted offer it saves, plus the occupancy it releases, each additional occupied slot earning $r$ and averting $\delta$ wasted offers per unit time). Otherwise, $\nu^*=0$.
        \item The optimal verification frequency $\nu^*$ is higher when the wasted-offer cost $s$ increases, the idle-slot cost $r$ increases, and the per-contact cost $c_\nu$ decreases. These comparative statics are strict while $\nu^* > 0$. 
    \end{enumerate}
\end{proposition}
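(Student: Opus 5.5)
The plan is to reparametrize the problem by the equilibrium waiting time $\mathbb{W}(\nu)$, which Theorem~\ref{thm:alpha_nu} shows is strictly decreasing in $\nu$. The capacity equation~\eqref{eq:capacity_const-alt} is a flow-balance identity, and it lets me collapse two of the three cost terms into one. Write $E(\nu) := \lambda p_0/(1+\theta\mathbb{W}(\nu))$ for the enrollment rate, equal to $K\delta U(\nu)$ by~\eqref{eq:U}. Write $P_o(\nu) := Q_p(\nu)/\mathbb{W}(\nu)$ for the rate of offers to phantom families.

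By~\eqref{eq:AP_nu}, $P_o(\nu)$ is exactly $\lambda(1-p_0+\theta\mathbb{W})/\bigl((1+\theta\mathbb{W})(1+\phi\nu\mathbb{W})\bigr)$. So~\eqref{eq:capacity_const-alt} reads $E(1/\delta+1/\gamma)+P_o/\gamma=K$, that is, $P_o = \gamma K - E(\gamma/\delta+1)$. Substituting into~\eqref{eq:reverify_cost} gives
\begin{equation*}
\mathrm{TC}(\nu) = (s\gamma + r)K - \gamma V\,\frac{E(\nu)}{\delta} + c_\nu\,\nu\,\bigl(Q_a(\nu)+Q_p(\nu)\bigr).
\end{equation*}
This follows from the identity $s(\gamma/\delta+1) + r/\delta = \gamma V/\delta$, with $V = s + (r+s\delta)/\gamma$. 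The cost therefore becomes a single benefit term, linear in the enrollment rate with coefficient proportional to $V$, plus the verification cost. This is exactly where the constant $V$ in~\eqref{eq:review_threshold} should come from.

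For part~(i) I would first compute the right derivative at $\nu=0$. Implicit differentiation of~\eqref{eq:capacity_const-alt} at $\nu=0$ gives $\mathbb{W}'(0)$ in closed form. The perturbation term is $-\phi\mathbb{W}(0)\,Q_p(0)/(\gamma\,\mathbb{W}(0))$ on the left-hand side, balanced by the $\mathbb{W}$-dependence of $E$ and $P_o$. Since $dE/d\nu$ can be read off the balance, $-\gamma V\,E'(0)/\delta$ should reduce to $-\phi\,V\,Q_p(0)$. Intuitively, each phantom removed at rate $\phi Q_p(0)$ saves $V$.

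The marginal verification cost at zero is $c_\nu\bigl(Q_a(0)+Q_p(0)\bigr)$. Using $Q_p(0) = (1-\alpha^*(0))\bigl(Q_a(0)+Q_p(0)\bigr)$, we get $\mathrm{TC}'(0^+)<0$ if and only if $\phi V(1-\alpha^*(0))>c_\nu$, which is~\eqref{eq:review_threshold}. If the derivative algebra does not collapse this cleanly, I would instead use the phantom flow conservation $\lambda - E = P_o + \phi\nu Q_p$ to organize it.

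The main obstacle is existence and uniqueness of $\nu^*$, and hence the ``only if'' direction. For this I would show that $\mathrm{TC}$ is strictly quasiconvex on $[0,\infty)$, working in the variable $x := \phi\nu\mathbb{W}$ or directly in $\mathbb{W}$. The benefit part $-\gamma V E/\delta$ is bounded and its marginal gain shrinks, since $Q_p(\nu)\to 0$ by Lemma~\ref{lemma:reverify_composition}. The verification cost, by contrast, grows at least like $c_\nu\,\nu\,Q_a(\nu)$, with $Q_a$ bounded below: by Theorem~\ref{thm:alpha_nu} the waitlist does not vanish because $\alpha^*\to 1$ and demand is sufficient. Existence then follows from continuity and $\mathrm{TC}(\nu)\to\infty$. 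For uniqueness I would first try to show that $\mathrm{TC}'(\nu)/\bigl(Q_a+Q_p\bigr)$ equals $c_\nu$ minus $\phi V$ times a strictly decreasing function (roughly the phantom share $1-\alpha^*(\nu)$, decreasing by Theorem~\ref{thm:alpha_nu}), plus a nonnegative correction. This yields a single sign crossing, and it also gives the ``only if'' in~(i): if $\mathrm{TC}'(0^+)\ge 0$, then $\mathrm{TC}'\ge0$ everywhere.

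For part~(ii), note that $s$ and $r$ enter only through $V$, and $\mathrm{TC}$ depends on $(V,c_\nu)$ via $-\gamma V E(\nu)/\delta + c_\nu\nu(Q_a+Q_p)$. The function $E(\nu)$ is increasing (Lemma~\ref{lemma:reverify_composition}), and $\nu(Q_a+Q_p)$ should be increasing in $\nu$. Then $\mathrm{TC}$ has strictly decreasing differences in $(\nu, V)$ and strictly increasing differences in $(\nu,c_\nu)$. Topkis' theorem together with the uniqueness above gives monotonicity of $\nu^*$. At an interior optimum, the implicit function theorem applied to the first-order condition, using the single-crossing property, gives strictness while $\nu^*>0$.
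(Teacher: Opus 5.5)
\textbf{Gap: uniqueness of $\nu^*$ is not established.} Three parts of your proposal are correct and match the paper: collapsing the wasted-offer and idle-capacity terms into $(s\gamma+r)K-\gamma V E/\delta$, the right derivative $\mathrm{TC}'(0^+)=Q(0)\bigl[c_\nu-\phi V(1-\alpha^*(0))\bigr]$ with $Q:=Q_a+Q_p$, and the coercivity argument for existence.

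The trouble is at the step you flag as the main obstacle. Uniqueness carries the ``only if'' half of (i) and the single-crossing/Topkis step in (ii), and it is only sketched; the sketch does not work. Your two flow identities give $E=\delta(K-\lambda/\gamma)+(\delta/\gamma)\phi\nu Q_p$. Differentiating then yields
\[
\frac{\mathrm{TC}'(\nu)}{Q}=c_\nu-\phi V\bigl(1-\alpha^*(\nu)\bigr)+\Bigl[-\phi V\,\frac{\nu Q_p'(\nu)}{Q}+c_\nu\,\frac{\nu Q'(\nu)}{Q}\Bigr].
\]
The bracket has a nonnegative piece and a nonpositive piece; the latter because the waitlist shrinks, so $Q'<0$. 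Your ``nonnegative correction'' is therefore not available. Even if it were, a lower bound by an increasing function gives the ``only if'' direction but not a single sign change of $\mathrm{TC}'$ when $\mathrm{TC}'(0^+)<0$. Uniqueness would still be open. The monotonicity of $\nu(Q_a+Q_p)$ that you use in (ii) is also asserted rather than shown.

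\textbf{The fix: make the problem one-dimensional in $E$.} You already have the two identities needed. From $\lambda-E=P_o+\phi\nu Q_p$ and $P_o=\gamma K-(1+\gamma/\delta)E$ you get $\phi\nu Q_p=\lambda-K\gamma+\gamma E/\delta$. The capacity balance gives $Q/\mathbb{W}=K\gamma-\gamma E/\delta$. Hence
\[
\phi\nu\,(Q_a+Q_p)=\phi\nu Q_p\cdot\frac{Q/\mathbb{W}}{Q_p/\mathbb{W}}=\frac{\bigl(\lambda-K\gamma+\gamma E/\delta\bigr)\bigl(K\gamma-\gamma E/\delta\bigr)}{K\gamma-(1+\gamma/\delta)E}.
\]
Set $a:=K\gamma\delta/(\gamma+\delta)$, $b:=\gamma/(\gamma+\delta)$ and $D:=K\gamma-(1+\gamma/\delta)E>0$. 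The right-hand side equals $a(\lambda-a)/D+b(\lambda-2a)-b^2D$. Since $\lambda>a$ and $D$ is affine and decreasing in $E$, this is strictly increasing and strictly convex in $E$, and it diverges as $E\uparrow a$. So $\mathrm{TC}$ is strictly convex in $E$ on $[\delta(K-\lambda/\gamma),a)$.

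Because $E(\nu)$ maps $[0,\infty)$ continuously and strictly increasingly onto that interval, everything follows:
\begin{itemize}
\item existence and uniqueness of the minimizer;
\item the threshold in (i), from the sign of the right derivative at the left endpoint;
\item the comparative statics in (ii), from the fixed-sign cross-partials of $d\mathrm{TC}/dE$ in $V$ and $c_\nu$.
\end{itemize}
This is the paper's argument.
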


Proposition~\ref{prop:optimal_review}(i) highlights that, when the verification process is sufficiently accurate---in the sense that families reveal their status candidly---it provides two benefits. It improves the informativeness of the waitlist, leading to fewer wasted offers and less slot idle time. The improvement comes at the cost of verification. At the optimal verification frequency, the value of removing one phantom family from the waitlist is balanced against the cost of one verification.  

Proposition~\ref{prop:optimal_review}(ii) further implies that the center should verify more frequently when vacant slots entail greater forgone revenue. Moreover, when the cost of an idle slot largely exceeds that of a wasted offer, the center should also verify more frequently as the cost imposed by phantom families increases or the cost of contacting families decreases.

\paragraph{Numerical Illustration.}

We return to the same classroom example in \S\ref{sec:paradox}, with $K = 10$, $\delta = 1/12$, $\gamma = 4$, $\lambda = 4.85$, $p_0 = 0.38$, $\theta = 0.075$, and $s = \$50$. We set the per-contact cost at $c_\nu = \$3$, reflecting an automated message and the staff time to process the replies, and value a filled slot at $r = \$1{,}200$ per month, the tuition an infant place earns net of the child's variable cost. Table~\ref{tab:reverify} reports the equilibrium as the verification frequency changes. The least costly of the tabulated frequencies is highlighted in blue.

\begin{table}[ht]
\centering
\caption{Outcomes under various verification frequencies $\nu$ when fully accurate ($\phi = 1$).
Parameters: $K = 10$, $\delta = 1/12$, $\gamma = 4$, $\lambda = 4.85$, $p_0 = 0.38$, $\theta = 0.075$, $s = \$50$, $c_\nu = \$3$, $r = \$1{,}200$.  All costs are monthly.}
\label{tab:reverify}
\small
\begin{tabular}{@{}l r r r r r r r r@{}}
    \toprule
    & & Waitlist & Slot & Wasted & Idle & Verifi- & Total & \\
    Verification freq. $\nu$ & $\alpha^*$ & length & Util. & offers & cap. & cation & cost & Savings \\
    \midrule
    None ($\nu=0$)                 & 0.15 & 98 & 0.87 & \$206 & \$1{,}455 & \phantom{00}\$0 & \$1{,}661 & --- \\
    Annual ($\nu = 1/12$)          & 0.33 & 43 & 0.94 & \phantom{0}\$81 & \phantom{0,}\$722 & \phantom{0}\$11 & \phantom{0,}\$814 & 51.0\% \\
    Twice yearly ($\nu = 1/6$)     & 0.44 & 32 & 0.95 & \phantom{0}\$52 & \phantom{0,}\$548 & \phantom{0}\$16 & \phantom{0,}\$616 & 62.9\% \\
    Quarterly ($\nu = 1/3$)        & 0.57 & 24 & 0.96 & \phantom{0}\$30 & \phantom{0,}\$421 & \phantom{0}\$24 & \phantom{0,}\$475 & 71.4\% \\
    Every two mon. ($\nu = 1/2$) & 0.66 & 21 & 0.97 & \phantom{0}\$21 & \phantom{0,}\$369 & \phantom{0}\$32 & \phantom{0,}\$422 & 74.6\% \\
    \blue{Monthly ($\nu = 1$)} & \blue{0.78} & \blue{18} & \blue{0.97} & \blue{\phantom{0}\$11} & \blue{\phantom{0,}\$311} & \blue{\phantom{0}\$53} & \blue{\phantom{0,}\$375} & \blue{77.4\%} \\
    Fortnightly ($\nu = 2$)        & 0.88 & 16 & 0.98 & \phantom{00}\$6 & \phantom{0,}\$279 & \phantom{0}\$94 & \phantom{0,}\$379 & 77.2\% \\
    Weekly ($\nu = 4$)             & 0.93 & 15 & 0.98 & \phantom{00}\$3 & \phantom{0,}\$262 & \$176 & \phantom{0,}\$442 & 73.4\% \\
    \bottomrule
\end{tabular}
\end{table}

Among the tabulated frequencies, the best policy is to verify the waitlist monthly, which cuts total cost by $77\%$, compared to no verification; the optimal frequency of Proposition~\ref{prop:optimal_review}, $\nu^* \approx 1.3$ rounds a month, saves slightly more ($78\%$). The cost reduction comes from two sources, as discussed above: wasted-offer costs fall by $\$195$ a month and idle-capacity costs fall by $\$1{,}144$ a month. On the other hand, monthly verification incurs a cost of $\$53$ per month. Even a single round a year recovers most of the gain, because the unverified list is $85$ percent phantom and any round removes them; the returns to further rounds then diminish.


With monthly verification, slot utilization improves from $0.88$ to $0.97$, an $11\%$ increase over the no-verification baseline, and the average number of empty slots falls from $1.2$ to $0.3$. Verification also reduces the share of phantom families from $85\%$ to $22\%$ and the waitlist from about $98$ names to $18$, so that a family receives an offer after $6.5$ months on average rather than $20.2$. Verifying waitlists too often (e.g., weekly) is counterproductive: the value of removing the few remaining phantom families is no longer commensurate with the cost of contacting every listed family.

\noindent\textbf{Effect of Accuracy.} 
The verification accuracy level changes the magnitude of the outcomes but does not change the nature of trade-offs embedded in the verification policy. 
At monthly rounds, the saving falls from $77\%$ under full accuracy to $76\%$ at $\phi = 0.75$, $73\%$ at $\phi = 0.5$, and $65\%$ at $\phi = 0.25$. Verification remains worthwhile even when three in four phantom families choose to stay on the waitlist even when they no longer need childcare at the center; at $\phi = 0.05$ monthly rounds still save $32\%$. As long as the accuracy is above a certain threshold $\phi_{\min}$, identified in Proposition~\ref{prop:optimal_review}(i), the waitlist verification process is beneficial; in this calibration the threshold is about $0.01$. 

The verification policy cleans the waitlist but does not affect the center's capacity. We now discuss another policy recommendation that reconfigures capacity based on regulations and children's developmental milestones. 


\section{Remedial Policy: Mixed-Age Classrooms} \label{sec:pooling}

Most childcare centers partition capacity by age because staffing requirements, group-size limits, physical-space rules, and children's developmental needs differ across age groups. Typically, centers operate separate classrooms for infants, toddlers, and preschoolers. 

Among the age groups, the need for childcare support is acute among families with infants~\citep{nces2022childcarereasons}. 
Unfortunately, waiting times for infant care are precisely the longest~\citep{hanson2024earlychildhood}. This is not only due to the high volume of infant care demand, but also due to shortages in capacity. In particular, regulatory and developmental requirements make infant care substantially more costly to provide and operationally more intensive than care for older children~\citep{hotz2011,gorry2017regulation,jessenhoward2018infanttoddlerdeserts}. Regulatory requirements---including staff-to-child ratios, group-size ceilings, and physical space requirements---are substantially more stringent for infant care than for care in older-age classrooms, such as toddler and preschool classrooms. For instance, Michigan requires infant and toddler classrooms (typically three to thirty months, with some exemptions) to have at least one adult caregiver per four children. In contrast, classrooms for older children have relaxed staff-to-child ratios. Specifically, for children from thirty months to three years, the ratio is 1:8, and for children above three years, the ratio is even smaller at 1:10~\citep{michigan2025childcare}. 
Thus, it is economically rational for centers to invest in older-age classrooms. In practice, many childcare centers avoid serving infants and toddlers and concentrate on older children, for whom tuition revenues are more likely to cover the required staffing and facility costs~\citep{jessenhoward2018infanttoddlerdeserts}.

Our proposed mixed-age classroom policy recommends shifting capacity toward the more expensive younger-age classrooms. This recommendation pushes capacity in a counterintuitive direction. However, we demonstrate that the benefits from improved utilization and shorter waitlists can be substantive.  

Mixed-age grouping is used in some early childhood education settings, including the Head Start programs and Montessori early education centers~\citep{OfficeofHeadStart2025HeadStartPrograms,demangeon2023montessori}, but evidence on its developmental effects is mixed and varies by children's age and peer composition~\citep{bell2013agecomposition,ansari2016agecomposition,foster2020agecomposition,vanlombergen2025mixedage}. Our model provides analytical support for the operational benefits of implementing such mixed-age grouping in early childhood.


The operational benefits of the mixed-age classroom policy are subtle and rely on the child's developmental progress with age.
The first benefit is due to operational pooling---our policy creates additional capacity for infant demand, which shortens infant waitlists and raises the infant classroom acceptance probability. However, the benefits of the mixed-age classroom policy extend beyond capacity pooling effects. 
The second benefit stems from expediting the enrollment process.  Specifically, infants in the center can directly move up to the toddler classroom. This process saves the center staffing costs associated with searching for a new family to fill a toddler slot. The third practical benefit of mixed-age classrooms is that the center is aware of a child's developmental stage and can transition the child into the next level when appropriate.

\subsection{Modeling Growth Milestones and Class Transitions}

Consider a center that runs two age-based classrooms, infants ($I$) and toddlers ($T$). Class $j \in \{I,T\}$ has $K_j$ licensed slots, and each receives age-based requests at rate $\lambda_j$.  Each occupied classroom slot becomes available at rate $\delta_j$. The turnover rates for the two classes differ because children's developmental milestones differ. Further, different states have different age cutoffs (see, e.g., \cite{michigan2025childcare, virginia2026childcare}). For simplicity, we assume that the two classes share the offer-resolution rate $\gamma$, the initial acceptance probability $p_0$, and the decay rate $\theta$. Each class runs the offer process on its own waitlist, so class $j$ has its own average acceptance probability $\alpha_j^*$.

A slot opens for two reasons: when an enrolled child reaches the developmental milestone and moves up to the next classroom, or when a family leaves the center for an unrelated reason such as relocation. 
We denote by $\sigma \in [0,1]$ the probability that an infant moves up to the toddler classroom. Then, $1-\sigma$ denotes the probability of discontinuing enrollment at the focal center upon completion of the infant stage. 
Since the child is already enrolled, the move-up process is straightforward without the need for an extended offer process. 

We denote the rate of moving up by $g$ (in practice, a child moving up to the next class is often referred to as \textit{graduation}). 
In steady state, infant enrollments and infant departures balance, so the move-up rate can be written as 
\begin{equation} \label{eq:grad_flow}
    g \;=\; \sigma\,\lambda_I\,\alpha_I^*. 
\end{equation}

The toddler room receives enrollments from two streams: children moving up from the infant classroom, and children enrolled from the external waitlist.

\begin{lemma}[Move-Up Process] \label{lemma:transition}
    \quad
    \begin{enumerate}
        \item [(i)] For the infant room, the average acceptance probability $\alpha_I^*$ is given by Theorem~\ref{thm:fixed_point}. 
        
        \item [(ii)] For the toddler room, (a) a fraction $g/(g + \lambda_T\alpha_T^*)$ of openings are filled by children moving up with acceptance probability one; (b) for the remaining openings, the average acceptance probability among external offers  $\alpha_T^*$ is given by Theorem~\ref{thm:fixed_point}, with expected waiting time
        \begin{equation} \label{eq:wait_T}
            \mathbb{W}_T(\alpha) \;=\; \frac{\alpha\gamma + \delta_T}{\left(\gamma\,(K_T\delta_T - g) - \alpha\gamma\lambda_T - \delta_T \lambda_T\right)^{+}}.
        \end{equation}
        (c) The steady-state fraction of occupied licensed slots is 
        \begin{align}\label{eq:U_T}
            U_T = \frac{g + \lambda_T \alpha_T^*}{K_T \delta_T}.
        \end{align}
        As the move-up rate into the toddler classroom $g$ increases, its average acceptance probability $\alpha_T^*$ strictly decreases and slot utilization $U_T$ strictly increases.
    \end{enumerate}
\end{lemma}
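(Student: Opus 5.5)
Part (i) is immediate. The infant room receives no internal flow, so its waitlist and offer process are exactly those of the single-classroom model of \S\ref{subsec:impatience}. Its average acceptance probability is therefore the unique fixed point of Theorem~\ref{thm:fixed_point} with parameters $(K_I,\delta_I,\lambda_I,\gamma,p_0,\theta)$. In steady state, enrollments balance departures, which gives the move-up rate \eqref{eq:grad_flow}.

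For part (ii), I would treat move-ups as a second enrollment stream that pre-empts part of the toddler capacity. A toddler opening is filled either by an internal child, with acceptance probability one and no offer delay, or through the external offer process. In the long run, openings equal enrollments. Toddler enrollments occur at rate $g$ from move-ups and at rate $\lambda_T\alpha_T^*$ from the external list, since every external arrival is eventually contacted when the list is stable, and is active at contact with average probability $\alpha_T^*$. Claim (a) is then the ratio of these two rates. Because each enrolled child holds a slot for mean time $1/\delta_T$, Little's law gives (c): the occupied slots number $(g+\lambda_T\alpha_T^*)/\delta_T$, which I divide by $K_T$.

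For (b), I would redo the clearance computation of \S\ref{subsec:unique_feature} with reduced external capacity. Move-ups absorb openings at rate $g$, which leaves external openings at rate $K_T\delta_T-g$. Each external opening cycles through an offer phase of mean length $1/(\alpha\gamma)$. The mean number of slots under external offer is then $(K_T\delta_T-g)/(\alpha\gamma)$ weighted appropriately, and I would match the resulting clearance rate to the form of \eqref{eq:clearing_rate}. Concretely, I would write the capacity identity
\[
(g+\lambda_T\alpha)\tfrac{1}{\delta_T}+\lambda_T\tfrac{1}{\gamma}=K_T
\]
at the boundary of stability. Here each external listing consumes $1/\gamma$ of offer time and each enrollment consumes $1/\delta_T$ of occupancy. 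As in Lemma~\ref{lemma:alpha-u-wait}, I would then take the waiting time to be exponential with rate equal to the effective clearance rate minus $\lambda_T$. Algebra yields \eqref{eq:wait_T}, and it reduces to \eqref{eq:alpha_wait} when $g=0$. The fixed-point argument of Theorem~\ref{thm:fixed_point} then applies verbatim with $\mathbb{W}_T$ in place of $\mathbb{W}$. The right-hand side $p_0/(1+\theta\mathbb{W}_T(\alpha))$ is decreasing in $\alpha$ wherever it is finite, since $\mathbb{W}_T$ increases in $\alpha$. So uniqueness carries over.

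For the monotonicity claim, I would note that $\mathbb{W}_T(\alpha)$ is strictly increasing in $g$ for each fixed $\alpha$, because the denominator shrinks. The map $\alpha\mapsto p_0/(1+\theta\mathbb{W}_T(\alpha;g))$ is therefore pointwise strictly decreasing in $g$. Since it is decreasing in $\alpha$, its intersection with the identity moves strictly left, so $\alpha_T^*$ strictly decreases. For $U_T$, I would substitute the fixed-point equation into $g+\lambda_T\alpha_T^*$ and show that its derivative in $g$ is positive. By implicit differentiation, $d\alpha_T^*/dg = -\,\partial_g F/(1-\partial_\alpha F)$, where $F$ is the fixed-point map. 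The goal is to bound $\lambda_T\,|d\alpha_T^*/dg|<1$.

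\textbf{This last bound is the step I expect to be hardest.} My first attempt would use the capacity identity. In the unstable regime, where $\mathbb{W}_T$ is large, $U_T$ equals $(g/\delta_T + \text{external occupancy})/K_T$ with the external occupancy fraction bounded by $\alpha\gamma/(\alpha\gamma+\delta_T)$. Each unit of $g$ converts an externally searched opening, which is idle for $1/(\alpha\gamma)$, into an instantly filled one. Total idle time therefore falls, and $U_T$ rises strictly. I would formalize this by writing $K_T(1-U_T)$ as (external openings)$\times 1/(\alpha_T^*\gamma)$. I would then show that this product decreases in $g$, using the fixed-point relation to control the offsetting drop in $\alpha_T^*$.
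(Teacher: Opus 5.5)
Parts (i), (a), (c), and the uniqueness claim in (b) follow the paper. Your pointwise comparison of the maps $\alpha\mapsto p_0/(1+\theta\mathbb{W}_T(\alpha;g))$ is a valid alternative to the paper's argument for the decrease of $\alpha_T^*$. It is strict only while $g<g_c:=\delta_T(K_T-\lambda_T/\gamma)$; beyond that point $\alpha_T^*\equiv0$.

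One small repair in (b). The identity that actually yields $\mu_T(\alpha)$ is the saturated-list one, $\mu/\gamma+(g+\alpha\mu)/\delta_T=K_T$, for a generic clearance rate $\mu$. Your version with $\lambda_T$ is only its specialization at the stability boundary.

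The genuine gap is the strict increase of $U_T$. You leave it open, and the route you sketch would not close it. The nontrivial regime is $g<g_c$, where the external list is \emph{stable}. In the unstable regime you invoke, the fixed point forces $\alpha_T^*=0$, so $U_T=g/(K_T\delta_T)$ is trivially increasing.

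In the stable regime, $K_T(1-U_T)$ is not (external openings)$\times 1/(\alpha_T^*\gamma)$. With external openings at rate $\lambda_T\alpha_T^*$, that product is the constant $\lambda_T/\gamma$, so it would make $U_T$ flat in $g$. Writing $\mu:=\mu_T(\alpha_T^*)>\lambda_T$, the correct accounting is
\[
K_T(1-U_T)=\frac{\lambda_T}{\gamma}+(\mu-\lambda_T)\Bigl(\frac{1}{\gamma}+\frac{\alpha_T^*}{\delta_T}\Bigr).
\]
All of the $g$-dependence lives in the second term, which counts slots idle while the external list is empty. Your formalization drops exactly that term.

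Your implicit-differentiation idea does close the gap, in one line, once you notice the following. Define $T_T(\mu):=(\mu-\lambda_T)/(\mu-\lambda_T+\theta)$, so the fixed-point map is $F(\alpha;g)=p_0T_T(\mu_T(\alpha;g))$. Then $g$ and $\alpha$ enter only through $\mu_T(\alpha;g)=\gamma(K_T\delta_T-g)/(\alpha\gamma+\delta_T)$, and $\partial_\alpha\mu_T=\mu_T\,\partial_g\mu_T$. Hence $\partial_\alpha F=\mu\,\partial_gF$ at the fixed point.

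Set $h:=-\partial_gF>0$. The correct formula is $d\alpha_T^*/dg=\partial_gF/(1-\partial_\alpha F)$; your sign is off. It gives $d\alpha_T^*/dg=-h/(1+h\mu)$, and therefore
\[
\frac{d}{dg}\bigl(g+\lambda_T\alpha_T^*\bigr)=\frac{1+h(\mu-\lambda_T)}{1+h\mu}>0
\]
by stability. In particular, $\lambda_T\lvert d\alpha_T^*/dg\rvert<\lambda_T/\mu<1$, which is the bound you were after.

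The paper reaches the same conclusion without derivatives by parametrizing with $\mu$. Both
\[
g=K_T\delta_T-(\delta_T/\gamma)\mu-p_0\mu T_T(\mu)
\quad\text{and}\quad
g+\lambda_T\alpha_T^*=K_T\delta_T-(\delta_T/\gamma)\mu-p_0(\mu-\lambda_T)T_T(\mu)
\]
are strictly decreasing in $\mu$. So as $g$ grows, $\mu$ falls and total enrollment rises.

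In either version you must still handle the regime $g\ge g_c$, where $\alpha_T^*=0$. You also need continuity of $\alpha_T^*$ at $g_c$, so that the two strictly increasing pieces of $U_T$ join without a downward jump.
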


Lemma~\ref{lemma:transition} (i) describes the acceptance probability of an infant classroom of $K_I$ slots, following Theorem~\ref{thm:fixed_point}. 
Lemma~\ref{lemma:transition} (ii) describes the older-age classroom, namely, the toddler classroom in this case. Children moving up occupy $g/\delta_T$ toddler slots on average, so external families face, in effect, a classroom of $K_T - g/\delta_T$ slots. As a result, \eqref{eq:wait_T} is \eqref{eq:alpha_wait} with $K$ replaced by $K_T - g/\delta_T$. The utilization in \eqref{eq:U_T} counts both streams of enrollment.
The move-up process (with rate $g$) improves the toddler slot utilization $U_T$. However, the improvement comes partly at the expense of families with toddlers on the external waitlist, who have to wait longer.


\subsection{Mixed-Age Equilibrium Through Flexible Slots}

We consider a center that converts $f$ toddler slots into \emph{flexible} slots, with the remaining toddler slots and all infant slots staying dedicated. A flexible slot is a slot that is available to admit new infants, infants moving up, and new toddlers. There is a natural upper bound $\hat{f}$ on how many slots can be made flexible, which depends on the rate of infants moving up $g$ and the original number of toddler slots $K_T$. Using Lemma~\ref{lemma:transition}, $\hat{f}$ is the unique solution of
\begin{equation} \label{eq:lendable}
    \hat{f} + \frac{g(\hat{f}) + \lambda_T\,\alpha_T^*(\hat{f})}{\delta_T} = K_T.
\end{equation}
Within this range, the flexible slots are available to infant families whenever the infant classroom needs additional capacity, so it operates as if it had $K_I + f$ slots, while the toddler room retains $K_T$ slots. In what follows, we focus on $f>0$, but the trade-offs apply to all values of $f$. When $f<0$, the only difference is that infant slots are made flexible to admit toddlers. 

Throughout this section, we assume $\lambda_I<K_I\gamma$, $g(0)<K_T\delta_T$ (equivalently, $\hat{f}>0$), and a finite expected wait for external toddlers at $f=\hat{f}$, ensuring finite infant waits and toddler capacity to lend.

\begin{lemma}[Capacity Relief and the Move-Up Flow] \label{lemma:pooling_alpha}
    Let $f \in [0,\hat{f}]$. 
    \begin{enumerate}[label=(\roman*)]
        \item \textbf{(Capacity Relief)} The infant room is characterized by Lemma~\ref{lemma:transition} (i) with $K_I$ replaced by $K_I+f$. As $f$ increases, its average acceptance probability $\alpha_I^*(f)$ strictly increases, expected wait $\mathbb{W}_I(f)$ strictly decreases, and the expected number of occupied infant slots $(K_I+f)U_I(f)$ strictly increases. 
        \item \textbf{(Move-Up Flow)} The toddler room is characterized by Lemma~\ref{lemma:transition} (ii) with $g$ replaced by $g(f)$. If $\sigma>0$, as $f$ increases, its average acceptance probability $\alpha_T^*(f)$ strictly decreases, and the expected number of occupied toddler slots $K_T U_T(f)$ strictly increases. If $\sigma=0$, both are constant. 
    \end{enumerate}
\end{lemma}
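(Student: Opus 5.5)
The plan is to reduce both parts to monotonicity of the fixed point in Theorem~\ref{thm:fixed_point}. The key tool is the map $F(\alpha;\kappa)=p_0/(1+\theta\,\mathbb{W}(\alpha;\kappa))$, where $\kappa$ is the effective capacity parameter entering the denominator of \eqref{eq:alpha_wait}. For the infant room, $\kappa = K_I+f$ and the denominator is $(K_I+f)\gamma\delta_I - \alpha\gamma\lambda_I - \delta_I\lambda_I$. For the toddler room, \eqref{eq:wait_T} gives the denominator $\gamma(K_T\delta_T-g) - \alpha\gamma\lambda_T - \delta_T\lambda_T$.

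In both cases $F$ is continuous, strictly decreasing in $\alpha$ (the numerator $\alpha\gamma+\delta$ rises and the denominator falls), and strictly increasing in the capacity term (for $\theta>0$ and in the finite-wait region). The fixed point $\alpha^*$ is therefore the unique crossing of the increasing identity map with a decreasing curve. Raising the capacity shifts the curve up, so $\alpha^*$ strictly increases. Lowering it, for instance by raising $g$, shifts the curve down, so $\alpha^*$ strictly decreases. This is the same argument that gives $\partial\alpha^*/\partial\lambda\le0$ in Lemma~\ref{lemma:alpha_cs}, and I would reuse it, possibly via the implicit function theorem on $G(\alpha,\kappa)=\alpha(1+\theta\mathbb{W})-p_0$.

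\textbf{Part (i).} Taking $\kappa=K_I+f$ gives $\alpha_I^*(f)$ strictly increasing. For the wait, I would use \eqref{eq:alpha_fp}: $\mathbb{W}_I=(p_0/\alpha_I^*-1)/\theta$, so the wait strictly decreases as $\alpha_I^*$ rises. Occupied slots equal $(K_I+f)U_I$. In the finite-wait regime every active arrival is enrolled, so this equals $\lambda_I\alpha_I^*/\delta_I$ by Lemma~\ref{lemma:alpha-u-wait}, since the min is attained by the demand term when $\mathbb{W}<\infty$. It therefore strictly increases with $\alpha_I^*$.

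\textbf{Part (ii).} By \eqref{eq:grad_flow}, $g(f)=\sigma\lambda_I\alpha_I^*(f)$ is strictly increasing in $f$ when $\sigma>0$ and identically zero when $\sigma=0$. Lemma~\ref{lemma:transition}(ii) already gives $\alpha_T^*$ strictly decreasing in $g$ and $U_T$ strictly increasing in $g$. Composing these, and noting that $K_T$ is fixed, yields the toddler claims. The $\sigma=0$ case is immediate because nothing in the toddler equations depends on $f$.

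\textbf{Main obstacle.} The hard step is the strict increase of $K_TU_T=(g+\lambda_T\alpha_T^*)/\delta_T$: as $g$ rises, $\alpha_T^*$ falls, so the two terms move in opposite directions. If I cannot simply cite Lemma~\ref{lemma:transition}(ii), I would differentiate the fixed point $\alpha=p_0/(1+\theta\mathbb{W}_T(\alpha,g))$ implicitly and aim to show $\lambda_T|d\alpha_T^*/dg|<1$. The reason this should hold is that one extra unit of move-up flow removes exactly one unit of rate capacity from external families, and the resulting drop in the external enrollment rate is damped by the feedback through $\mathbb{W}_T$, so it is less than one for one. I would also check that the assumption of finite toddler wait at $\hat f$ keeps the whole range $f\in[0,\hat f]$ in the stable regime, so that the positive-part in \eqref{eq:wait_T} never binds.
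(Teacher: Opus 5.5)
Your proposal is correct. Part (ii) matches the paper: you compose $g(f)=\sigma\lambda_I\alpha_I^*(f)$ with the monotonicity in $g$ from Lemma~\ref{lemma:transition}(ii). You may cite that lemma directly, so the fallback implicit differentiation is unnecessary, though its target $\lambda_T|d\alpha_T^*/dg|<1$ is true. The stable-regime check you flag takes one line: $g$ is nondecreasing in $f$ and the external wait is finite at $\hat f$, so $g(f)\le g(\hat f)<\delta_T(K_T-\lambda_T/\gamma)$ on $[0,\hat f]$.

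Part (i) takes a different route from the paper. You get strict monotonicity of $\alpha_I^*$ by comparing fixed-point maps, the device from the proof of Lemma~\ref{lemma:alpha_cs}. Strictness comes from the strict upward shift of $F(\cdot;K_I+f)$ at the old fixed point, where the wait is finite by Lemma~\ref{lemma:alpha-star}. The paper instead reparametrizes by the equilibrium clearance rate. With $T_I(\mu)=(\mu-\lambda_I)/(\mu-\lambda_I+\theta)$, the fixed point $\alpha_I^*=p_0T_I(\mu_I)$ and the clearance identity combine into $K_I+f=\mu_I/\gamma+p_0\mu_IT_I(\mu_I)/\delta_I$. This is continuous and strictly increasing in $\mu_I$, so $\mu_I(f)$, $\alpha_I^*(f)$ and $\mathbb{W}_I(f)=1/(\mu_I(f)-\lambda_I)$ are all continuous and monotone in $f$.

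Your comparison argument is shorter and avoids the inversion, but it yields only monotonicity. The paper's inversion also delivers continuity of $\alpha_I^*(f)$ and $g(f)$. The paper uses this in the same proof to show, via the intermediate value theorem, that \eqref{eq:lendable} has a unique root $\hat f$, and reuses the inversion idea for the concavity argument in Proposition~\ref{prop:optimal_pooling}. For the lemma as stated, with $\hat f$ taken as given, your argument suffices.
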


Lemma~\ref{lemma:pooling_alpha} (i) pertains to the capacity relief achieved by flexibility. Each flexible slot added to the infant classroom shortens the expected wait for infant enrollments, which raises $\alpha_I^*(f)$. The increased acceptance probability effectively reduces the phantom listings. 

Lemma~\ref{lemma:pooling_alpha} (ii) pertains to the effect of move-up flow to the upper class (namely, the toddler classroom in this case). Slot utilization increases because infants move up and fill slots immediately. As a result, a larger share of slot openings is filled by children moving up, leading to a decrease in external enrollment.

Note that the move-up model adds benefits distinct from pure pooling benefits. Specifically, our policy of making toddler slots flexible for infants ($f>0$) strengthens the move-up effect, increasing the equilibrium enrollment rate. To see this effect, consider the case of moving some infant slot capacity to toddlers ($f<0$). The pooling effects persist, but the move-up effects are weaker. This asymmetry arises from the developmental process unique to early childhood education. 


\subsection{The Optimal Number of Flexible Slots}

Recall the center incurs three sources of costs---wasted-offer cost, idle-capacity cost, and cost of flexibility---as in \S\ref{sec:policies}, and all three change with the number of flexible slots $f$. Absent verification, every listed family is offered a slot exactly once, so class $j$ wastes $\lambda_j(1-\alpha_j^*)$ offers per unit time, each costing $s$. Note that there are no wasted-offer costs associated with infants moving up to older-age classrooms. Empty slots cost $r$ each per unit time. Flexibility carries a staffing-and-layout cost $C(f)$, increasing and strictly convex with $C(0) = 0$. In childcare settings, creating flexible slots requires operational adjustment. For instance, converting a toddler slot to an infant slot requires a crib, diapering and bottle-preparation facilities, and floor space that keeps a non-mobile infant clear of walking toddlers.  In addition, infant slots may require more staff.\footnote{In some states like Michigan, infant and toddler classrooms require the same staff-to-child ratio. In others, infant classrooms have a higher staff-to-child ratio than toddler classrooms. 
Because a mixed group must satisfy the staff-to-child ratio for its youngest child, the separation between age groups may be beneficial to avoid incurring additional staffing costs. 
} 
The center chooses $f$ to minimize
\begin{equation} \label{eq:pooling_objective}
    \mathrm{TC}(f) = \underbrace{s\Bigl[\lambda_I\bigl(1 - \alpha_I^*(f)\bigr) + \lambda_T\bigl(1 - \alpha_T^*(f)\bigr)\Bigr]}_{\text{wasted offers}} + \underbrace{r\left[K_I + K_T - \frac{\lambda_I\alpha_I^*(f)}{\delta_I} - \frac{g(f) + \lambda_T\alpha_T^*(f)}{\delta_T}\right]}_{\text{idle capacity}} + \underbrace{C(f)}_{\text{flexibility}}.
\end{equation}

\begin{proposition}[Optimal Number of Flexible Slots] \label{prop:optimal_pooling}
    $TC(f)$ is strictly convex on $f \in [0,\hat{f}]$, where it has a unique minimizer. A global minimizer $f^*$ of $TC(f)$ exists.
    \begin{enumerate}[label=(\roman*)]
        \item $f^* > 0$ requires the first flexible toddler slot pays for itself, namely, 
        \begin{align}\label{eq:pooling_threshold}
            \lambda_I\,\alpha_I^{*\prime}(0)\,V_I \;+\; \lambda_T\,\alpha_T^{*\prime}(0)\,V_T \;>\; C'(0),
        \end{align}
        where $V_I \;:=\; s \;+\; r/\delta_I \;+\; \sigma\,r/\delta_T$ and $V_T \;:=\; s \;+\; r/\delta_T$ are the values of enrolling one infant and one toddler, respectively. Otherwise, $f^*<0$.\footnote{Negative values of $f$ lend infant slots to the toddler classroom. Infants retain access to all $K_I$ slots, while the toddler classroom behaves as one with $K_T - f$ slots. The infant classroom can lend at most its mean number of idle slots, namely, $K_I - \lambda_I \alpha_I^*(0) / \delta_I$.}
        \item When $f^*>0$, the optimal conversion toward infants $f^*$ is higher when the wasted-offer cost $s$ increases, the idle-slot cost $r$ increases, and the marginal flexibility cost $C'$ decreases. 
    \end{enumerate}
\end{proposition}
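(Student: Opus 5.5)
The plan is to reduce $\mathrm{TC}(f)$ to an explicit function of the two equilibrium acceptance probabilities and then exploit their monotonicity and curvature. Substitute $g(f)=\sigma\lambda_I\alpha_I^*(f)$ from \eqref{eq:grad_flow} into \eqref{eq:pooling_objective}. Collecting terms gives
\begin{equation*}
\mathrm{TC}(f) = \text{const} - \lambda_I\,\alpha_I^*(f)\,V_I - \lambda_T\,\alpha_T^*(f)\,V_T + C(f),
\end{equation*}
with $V_I = s + r/\delta_I + \sigma r/\delta_T$ and $V_T = s + r/\delta_T$. Differentiating at $f=0$ produces the threshold \eqref{eq:pooling_threshold} immediately, as the condition $\mathrm{TC}'(0)<0$. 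This is the reason $V_I$ includes the term $\sigma r/\delta_T$: each infant enrollment later fills a toddler slot for $1/\delta_T$ time with probability $\sigma$.

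For strict convexity on $[0,\hat f]$, I would use the fixed-point characterization of Theorem~\ref{thm:fixed_point}.

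\emph{Infant room.} With $K=K_I+f$ this reads $\alpha(1+\theta\mathbb{W}_I(\alpha,f))=p_0$, where $\mathbb{W}_I=(\alpha\gamma+\delta_I)/(\gamma\delta_I(K_I+f)-\lambda_I(\alpha\gamma+\delta_I))$. The cleanest route is to change variables to $x:=\alpha\gamma+\delta_I$. The equation then becomes linear in $K_I+f$ once one solves for $f$ as a function of $\alpha$. I would show that $f(\alpha)$ is increasing and convex in $\alpha$, which by inverse-function calculus gives $\alpha_I^*(f)$ increasing and concave. The monotonicity part is Lemma~\ref{lemma:pooling_alpha}(i).

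\emph{Toddler room.} Here $K_T\delta_T-g(f)$ plays the role of effective capacity. Since $g$ is a positive multiple of the concave increasing $\alpha_I^*$, the effective capacity is convex and decreasing in $f$. By the same inverse-function computation, $\alpha_T^*$ is increasing and concave in its effective capacity. Composing a concave increasing function with a convex decreasing argument does not by itself give concavity, so I would compute $\alpha_T^{*\prime\prime}$ directly via implicit differentiation and bound the cross term using $\alpha_T^*\le p_0$ and the finite-wait assumption at $\hat f$.

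Combining these, $-\lambda_I V_I\alpha_I^*$ is convex, the toddler term is convex, and $C$ is strictly convex, so $\mathrm{TC}$ is strictly convex on $[0,\hat f]$. Continuity on this compact interval then gives a unique minimizer there. For global existence, I would extend the same representation to $f<0$ using the lending model of the footnote, which is again compact since lending is bounded by the mean number of idle infant slots. Existence of a global minimizer then follows from continuity on a compact domain.

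For part (i), if \eqref{eq:pooling_threshold} fails, then $\mathrm{TC}'(0)\ge 0$, and convexity on $[0,\hat f]$ rules out any positive minimizer. Hence $f^*\le 0$; the statement writes $f^*<0$, and I would treat the boundary case of equality separately. For part (ii), the first-order condition $\lambda_I\alpha_I^{*\prime}V_I+\lambda_T\alpha_T^{*\prime}V_T=C'(f^*)$ holds at an interior optimum. Increasing $s$ or $r$ scales up the left side, provided the marginal benefit $\lambda_I\alpha_I^{*\prime}+\lambda_T\alpha_T^{*\prime}$ and its $r$-analogue are positive near $f^*$; I would argue this from the first-order condition itself, since $C'>0$. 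Lowering $C'$ lowers the right side. Because $\mathrm{TC}$ is strictly convex, the implicit function theorem then moves $f^*$ up. I expect the main obstacle to be the concavity of $\alpha_T^*(f)$, since $\alpha_T^*$ decreases in $f$ with a sign-indefinite second derivative. If the direct bound fails, I would fall back to showing that $\alpha_T^*(f)$ is concave in $g$ and affine enough in $\alpha_I^*$ that the combined term $\lambda_I V_I\alpha_I^*+\lambda_T V_T\alpha_T^*$ is concave. To do this, I would write it as a function of $\alpha_I^*$ alone and use $V_I\ge\sigma V_T$-type comparisons.
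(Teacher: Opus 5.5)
\textbf{The convexity step is the genuine gap.} Your representation $\mathrm{TC}=\text{const}-\lambda_I V_I\alpha_I^*-\lambda_T V_T\alpha_T^*+C$ is correct, and so are the inverse-function concavity of each room's acceptance in its own capacity and the threshold as $\mathrm{TC}'(0+)<0$. But your main route needs $\alpha_T^*(f)$ to be concave in $f$, and that is false in general, so no bound using $\alpha_T^*\le p_0$ and finite waits can deliver it.

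To see why, write $\alpha_T^*(f)=a_T(\kappa(f))$ with $\kappa=K_T-c\,\alpha_I^*(f)$ and $c=\sigma\lambda_I/\delta_T$. Then $\alpha_T^{*\prime\prime}=a_T''\kappa'^2+a_T'\kappa''$ and $\kappa''/\kappa'^2=-\alpha_I^{*\prime\prime}/\bigl(c\,(\alpha_I^{*\prime})^2\bigr)$. This ratio blows up when the infant room is lightly loaded ($\alpha_I^*$ near $p_0$), while $|a_T''|/a_T'$ can stay bounded. So $\alpha_T^*$ can be locally convex.

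The missing idea is to regroup so that the only toddler quantity left is total toddler enrollment $E(f):=g(f)+\lambda_T\alpha_T^*(f)$. Substituting $\lambda_T\alpha_T^*=E-g$ gives
\[
\mathrm{TC}(f)=\text{const}-(V_I-\sigma V_T)\,\lambda_I\alpha_I^*(f)-V_T\,E(f)+C(f),\qquad V_I-\sigma V_T=s(1-\sigma)+\frac{r}{\delta_I}\geq 0 .
\]
As a function of $g$, $E=g+\lambda_T\alpha_T^*(g)$ is concave, because $\alpha_T^*$ is concave in the affine capacity $K_T-g/\delta_T$. It is also nondecreasing by Lemma~\ref{lemma:transition}(ii), since toddler utilization rises with move-ups; that is, $-\lambda_T\,d\alpha_T^*/dg\leq 1$. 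A concave nondecreasing function of the concave $g(f)$ is concave, so both terms above are convex and $C$ makes $\mathrm{TC}$ strictly convex. Your fallback heads this way, but concavity of the composite in $\alpha_I^*$ is automatic. What matters is its monotonicity in $\alpha_I^*$, and $V_I\geq\sigma V_T$ alone does not give that. The bound $-\lambda_T\,d\alpha_T^*/dg\leq 1$ is the indispensable input, and you never invoke it.

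\textbf{Part (ii) needs the same fact, and your argument there is invalid as written.} The first-order condition with $C'>0$ only says a $(V_I,V_T)$-weighted combination of $\lambda_I\alpha_I^{*\prime}>0$ and $\lambda_T\alpha_T^{*\prime}<0$ is positive. That says nothing about the combinations with weights $(1,1)$ and $(1/\delta_I+\sigma/\delta_T,\,1/\delta_T)$ that govern the $s$- and $r$-derivatives. The correct reason is that these combinations equal $(1-\sigma)\lambda_I\alpha_I^{*\prime}+E'$ and $\lambda_I\alpha_I^{*\prime}/\delta_I+E'/\delta_T$, both positive because $E'\geq 0$. Your implicit-function step also needs $\mathrm{TC}''(f^*)>0$ and an interior optimum. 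The paper avoids both by a monotone-shift argument: adding a strictly decreasing function of $f$ to a strictly convex objective cannot move its unique minimizer on $[0,\hat f]$ left. Likewise, adding $C_2-C_1$, which is nondecreasing when $C_2'\geq C_1'$, cannot move it right.

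\textbf{For ``otherwise $f^*<0$'' your argument yields only $f^*\leq 0$.} The issue is not just the equality case. Whenever the threshold fails, the minimizer on $[0,\hat f]$ is $0$, and you must still show that lending infant slots beats it. The paper works on $[-\check f,0]$, where $\check f=K_I-\lambda_I\alpha_I^*(0)/\delta_I$. There the infant side is frozen, while toddler capacity $K_T-f$ grows as $f$ decreases, so $E$ strictly increases. With $V_T>0$ and the paper's added assumption that $C$ is nondecreasing on the feasible interval, $\mathrm{TC}$ is strictly increasing on $[-\check f,0]$. Hence $\mathrm{TC}(-\check f)<\mathrm{TC}(0)$ and $f^*=-\check f$.
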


Proposition~\ref{prop:optimal_pooling}(i) prescribes the optimal direction of flexibility. Flexibility toward the infant classroom raises the infant room's acceptance rate (by $\lambda_I\alpha_I^{*\prime}$) and yields three key benefits: it (1) reduces wasted offers, saving $s$ for each; (2) accelerates infant enrollment, earning $r/\delta_I$; and (3) accelerates toddler enrollment through the move-up process at rate $\sigma$, earning $\sigma r/\delta_T$. 
On the other hand, if the marginal cost of converting a toddler slot to infants is prohibitively high, then it is better to shift more capacity from the infant classroom to toddlers. 

Flexibility (of either direction) is always valuable.\footnote{Converting the first infant slot into a toddler slot reduces operating costs (e.g., relaxed staff-to-child ratio), as toddler slots are less costly.}
Proposition~\ref{prop:optimal_pooling}(ii) further shows that the optimal conversion toward infants uses more flexible slots when the wasted-offer cost is higher and idle capacity is more costly. Furthermore, if flexibility is less costly, the center chooses more of it.

\paragraph{Numerical Illustration.}

The infant classroom is the one studied in~\S\ref{sec:paradox}: $K_I = 10$ slots facing $\lambda_I = 4.85$ requests per month, with $\gamma = 4$, $p_0 = 0.38$, $\theta = 0.075$, $\delta_I = 1/12$, $s = \$50$, and $r = \$1{,}200$ per slot-month. Following the age bands used by the state of Michigan, under which a child is a toddler from 12 to 30 months of age, a toddler stays for $1/\delta_T = 18$ months on average. Because nearly every infant the center enrolls later needs a toddler place for eighteen months, the toddler room must be larger than the infant room; we consider $K_T = 20$ toddler slots, operated as two groups within Michigan's group-size ceiling. The toddler room draws demand from two sources: external families, who request a slot at rate  $\lambda_T=0.5$ per month, and the center's own infant room, from which a share $\sigma=0.9$ of departing infants move up. We take a flexibility cost of $C(f) = 500f + 500f^2$ per month, which prices the first conversion at the cost of equipping and relicensing one slot for infant care and lets the penalty steepen as flexibility grows. Table~\ref{tab:pooling} summarizes the results, with the optimal configuration highlighted in blue.\footnote{Three flexible slots exceed what the toddler program can lend ($\hat f \approx 2.4$): the toddlers it holds plus three infants would exceed its twenty slots. The row of $f=3$ is reported to show that a third slot would not pay for itself even if it could be lent.}

\begin{table}[ht]
\centering
\caption{Outcomes for various flexible slots $f$. $K_I = 10$, $K_T = 20$, $\lambda_I = 4.85$, $\lambda_T = 0.5$, $\delta_I = 1/12$, $\delta_T = 1/18$, $\gamma = 4$, $p_0 = 0.38$, $\theta = 0.075$, $\sigma = 0.9$, $s = \$50$, $r = \$1{,}200$, $C(f) = 500f + 500f^2$. All costs and frequencies are monthly.}
\label{tab:pooling}
\small
\setlength{\tabcolsep}{3.5pt}
\begin{tabular}{@{}l r r r r r r r r r@{}}
    \toprule
    & & Active & Infant & & Active & Toddler & Slot & Total & \\
    Flexible slots & $\alpha_I^*$ & infant & waitlist & $\alpha_T^*$ & toddler & waitlist & util. & cost & Saving \\
    & & $\lambda_I\alpha_I^*$ & length & & $g + \lambda_T\alpha_T^*$ & length & & & \\
    \midrule
    None ($f = 0$) & 0.15 & 0.72 & 100 & 0.35 & 0.82 & 0.6 & 0.78 & \$7{,}979 & --- \\
    One ($f = 1$) & 0.17 & 0.80 & 83 & 0.34 & 0.89 & 0.8 & 0.86 & \$6{,}338 & 20.6\% \\
    \blue{Two ($f = f^* = 2$)} & \blue{0.18} & \blue{0.88} & \blue{70} & \blue{0.32} & \blue{0.96} & \blue{1.2} & \blue{0.93} & \blue{\$5{,}774} & \blue{27.6\%} \\
    Three ($f = 3$) & 0.20 & 0.96 & 59 & 0.30 & 1.02 & 1.8 & 0.996 & \$6{,}352 & 20.4\% \\
    \bottomrule
\end{tabular}
\end{table}

The optimal policy makes two of the toddler room's twenty slots age-flexible, reducing total monthly cost by $27.6\%$, from \$7{,}979 to \$5{,}774. This configuration raises the infant room's acceptance probability from $15\%$ to $18\%$ and its active demand---the rate at which infants actually enroll---from $0.72$ to $0.88$ children per month. The infant waitlist shrinks from $100$ to $70$ families, and center-wide slot utilization rises from $78\%$ to $93\%$. 
A conventional pooling calculation (that ignores the move-up effect) would indicate one flexible slot as the optimal flexibility level, missing out on additional $7\%$ savings ($27.6\%$ versus $20.6\%$).

\section{Concluding Remarks} \label{sec:conclusion}

Childcare is a central part of the economic infrastructure that supports working families. Yet childcare systems continue to face three closely related challenges: access, affordability, and quality. This paper focuses on access. We model childcare waitlists and propose policies to reduce the informational frictions underlying a widely observed paradox: families face long waitlists even as available childcare capacity goes unused.

We identify two implementable policy improvements. The first, waitlist verification, is a ``high-touch'' service policy that requires centers to balance the staffing cost of contacting families against the costs of wasted offers and idle capacity. Our analysis provides an operational rationale for periodic reconfirmation schedules, such as monthly or quarterly verification, that allow centers to update their waitlists as families' needs and decisions change. The second policy uses mixed-age classrooms to increase flexibility in capacity allocation. Their value extends beyond conventional pooling: centers can use information about children's progression and developmental stages to manage capacity across classrooms more effectively.

Our work also suggests several directions for future research. One concerns centralization. Families frequently apply to multiple providers, and enrollment at one center can leave a family on the waitlists of other centers, creating phantom listings throughout the system. A shared platform or centralized system operated by a public agency could track applications across providers, remove obsolete listings, and coordinate assignment mechanisms, much as centralized enrollment systems do in public education.

A second direction concerns richer enrollment controls. Refundable deposits or renewal fees could encourage families to keep their waitlist status current, although any efficiency gains must be weighed against the possibility of reducing access for families facing liquidity constraints~\citep{zhan2023precharging,hermes2025application}. Centers could also consider making multiple offers for a single vacancy, trading faster placement against the risk that more than one family accepts~\citep{epstein2024selection}. More generally, offers could be coordinated with predictable departures, age transitions, and internal move-ups. Because admission dates and birth months affect classroom tenure and the timing of future vacancies, jointly scheduling transitions and offers may substantially reduce idle capacity~\citep{ofarrelly2013transitions}.

Our model abstracts from pricing, but the design and timing of public subsidies and employer-sponsored family benefits can affect both families' application decisions and centers' enrollment outcomes. An important question is whether financial support should be shifted toward the childcare years, when families often face particularly acute financial and employment constraints, rather than concentrated later in children's education~\citep{ho2020efficient,arora2021improving,costasfernandez2026familyfriendly}. More broadly, incorporating prices, subsidies, and liquidity constraints into models of childcare access could reveal important interactions between financial support and capacity utilization.

Finally, there is often an interaction between access and quality of care. 
Increasing capacity utilization is valuable only if it can be achieved without compromising the quality and continuity of care. Future work could examine how staffing levels, caregiver assignments, caregiver continuity, classroom composition, and children's developmental outcomes interact with admission and capacity decisions~\citep{hotz2011,le2015caregiving,choi2018continuity}. This would allow operational policies aimed at improving access to be evaluated not only by how quickly children are placed, but also by the quality of care they receive.

These questions point to a broader operations research agenda for childcare. Many of the underlying problems involve the allocation of scarce capacity under uncertain and changing demand, yet the institutional details of childcare create features that differ from those of conventional service systems. To help ground this research in families' experiences, we maintain a public database of survey responses from working families in academia about their childcare experiences at \url{https://childcare-om.vercel.app/}. 

We hope this paper takes a step toward a more systematic understanding of childcare operations. Childcare offers fertile ground for operations research and operations management, with questions spanning queueing and matching, capacity and workforce design, information disclosure, contracting, and field experimentation. It also provides an opportunity to translate operations research into tangible social impact by helping public and private investment produce more accessible, affordable, and high-quality care for children while improving the economic security and well-being of families.


\bibliography{bibliography}
\appendix

\newpage
\section{Proofs from Section~\ref{sec:paradox}} 

\subsection{Proof of Lemma~\ref{lemma:paradox}}

Throughout the proof, $p(w)=p$, so every offer is accepted with probability $p$, irrespective of the recipient's elapsed wait, and distinct offers are resolved independently. For a given slot, with a family always available to contact, an enrollment epoch regenerates the slot. We use the renewal-reward theorem to evaluate its long-run averages as ratios of expectations over a single i.i.d.\ cycle. 

Fix a slot and let a cycle begin at an enrollment. The slot is occupied for a mean $1/\delta$ time units and then stands open until the next enrollment. While it is open, the center keeps exactly one offer outstanding. Each offer is resolved after a mean $1/\gamma$ time units and is accepted with probability $p$, so the number of offers made during the open stage is geometric with mean $1/p$. The open stage therefore lasts a mean $1/(p\gamma)$ time units and removes a mean $1/p$ families from the waitlist, exactly one of whom enrolls. Successive cycles are independent and identically distributed, with finite mean length $1/\delta+1/(p\gamma)$. By the renewal-reward theorem, rewarding the open stage gives the long-run fraction of time that a slot stands open: 
\begin{align*}
    \frac{1/(p\gamma)}{1/\delta+1/(p\gamma)}=\frac{\delta}{p\gamma+\delta},
\end{align*}
which is~\eqref{eq:open_share}. Rewarding each family removed gives the long-run rate at which one slot clears the waitlist: 
\begin{align*}
    \frac{1/p}{1/\delta+1/(p\gamma)}=\frac{\gamma\delta}{p\gamma+\delta}.
\end{align*}
Slots become available independently of one another and each runs its own offer process, so summing the above display over the $K$ slots gives the effective clearance rate
\begin{align*}
    \mu_{\mathrm{eff}}=\frac{K\gamma\delta}{p\gamma+\delta},
\end{align*}
as in~\eqref{eq:clearing_rate}, of which a fraction $p$ are enrollments. Because this calculation assumes that a family is available to contact whenever the slot is open, so $\mu_{\mathrm{eff}}$ is the largest rate at which the center can clear its waitlist.

\noindent\emph{(i)} We consider two cases. 

\textbf{Case 1:} $\lambda<\mu_{\mathrm{eff}}$. The waitlist is stable, so its long-run clearance rate equals the arrival rate $\lambda$. Hence, every listed family is eventually made an offer. Each cleared family accepts with probability $p$, giving an enrollment rate of $\lambda p$. Each enrollment occupies one slot for a mean $1/\delta$ time units. Little's law applied to the occupied slots therefore gives a mean of $\lambda p/\delta$ occupied slots and
\begin{align*}
    U=\frac{\lambda p}{K\delta}.
\end{align*}

\textbf{Case 2:} $\lambda\geq\mu_{\mathrm{eff}}$. Requests arrive at least as fast as the center can remove families, so the long-run fraction of time with an empty waitlist is zero and the cycle structure above applies unmodified. Rewarding the occupied stage, the renewal-reward theorem gives
\begin{align*}
    U=\frac{1/\delta}{1/\delta+1/(p\gamma)}=\frac{p\gamma}{p\gamma+\delta}.
\end{align*}

Since
\begin{align*}
    \lambda<\mu_{\mathrm{eff}}
    \quad\Longleftrightarrow\quad
    \frac{\lambda p}{K\delta}<\frac{p\gamma}{p\gamma+\delta},
\end{align*}
the applicable branch in each case is the minimum in~\eqref{eq:umax}, and the two branches agree at $\lambda=\mu_{\mathrm{eff}}$. Finally,
\begin{align*}
    U\leq\frac{p\gamma}{p\gamma+\delta}=1-\frac{\delta}{p\gamma+\delta}<1,
\end{align*}
where the last inequality follows from $\delta>0$. 

\noindent\emph{(ii)}  Based on our model introduced in~\S\ref{subsec:primitives}, we assume that the time from joining the waitlist to receiving an offer is an exponential random variable with rate $\mu_{\mathrm{eff}}-\lambda$ when $\lambda<\mu_{\mathrm{eff}}$; otherwise, its mean is infinity. 

\textbf{Case 1:} $\lambda<\mu_{\mathrm{eff}}$. The exponentially distributed $w$ with rate $\mu_{\mathrm{eff}}-\lambda$ has a finite mean of 
\begin{align*}
    \mathbb{W}=\frac{1}{\mu_{\mathrm{eff}}-\lambda}
    =\frac{1}{\frac{K\gamma\delta}{p\gamma+\delta}-\lambda}
    =\frac{p\gamma+\delta}{K\gamma\delta-p\gamma\lambda-\delta\lambda}.
\end{align*}

\textbf{Case 2:} $\lambda\geq\mu_{\mathrm{eff}}$. We set $\mathbb{W}=\infty$.

Both cases give~\eqref{eq:queue_length}, with Case 2 encoded by the positive part in~\eqref{eq:queue_length}. 

Finally, with constant acceptance probability $p$, the expected active share of the waitlist is $p$.
\qed

\subsection{Proof of Proposition~\ref{prop:paradox}}

Equation~\eqref{eq:umax} and the identity $1-\min\{a,b\}=\max\{1-a,1-b\}$ give
\begin{align*}
\begin{aligned}
    K(1-U)
    &=\max\left\{K-\frac{\lambda p}{\delta},\;K\left(1-\frac{p\gamma}{p\gamma+\delta}\right)\right\}\\
    &=\max\left\{K-\frac{\lambda p}{\delta},\;\frac{K\delta}{p\gamma+\delta}\right\}
    \geq\frac{K\delta}{p\gamma+\delta}>0,
\end{aligned}
\end{align*}
where the last inequality follows from $K\geq1$, $\delta>0$, and $p\gamma+\delta>0$.

\textbf{Case 1:} $\lambda<\mu_{\mathrm{eff}}$. Recalling $\mu_{\mathrm{eff}} = K\gamma\delta / (p\gamma +\delta)$, Lemma~\ref{lemma:paradox}(ii) gives
\begin{align*}
    \mathbb{W} = \frac{1}{\mu_{\mathrm{eff}}-\lambda} > \frac{1}{\mu_{\mathrm{eff}}} > 0,
\end{align*}
where the first inequality is strict because $\lambda>0$. 

\textbf{Case 2:} $\lambda\geq\mu_{\mathrm{eff}}$. Lemma~\ref{lemma:paradox}(ii) gives $\mathbb{W}=\infty$ and the bound holds trivially.

Below we prove the second half of the statement. When $\lambda<\mu_{\mathrm{eff}}$, $\mathbb{W}<\infty$. Note that the effective clearance rate $\mu_{\mathrm{eff}}$ does not depend on $\lambda$, so
\begin{align*}
    \frac{\partial\mathbb{W}}{\partial\lambda} = \frac{1}{(\mu_{\mathrm{eff}}-\lambda)^2} > 0,
\end{align*}
implying that $\mathbb{W}$ monotonically increases in $\lambda$. In particular, $\mathbb{W}\to\infty$ as $\lambda\uparrow\mu_{\mathrm{eff}}$, after which it remains infinite, because $\mathbb{W} = \infty$ when $\lambda\geq\mu_{\mathrm{eff}}$. 

From Lemma~\ref{lemma:paradox}(i), the number of empty slots $K(1-U)$ is given by 
\begin{align*}
    \max\left\{K - \frac{\lambda p}{\delta}, \frac{K\delta}{p\gamma+\delta}\right\}.
\end{align*}
Note that $K-\lambda p/\delta$ is strictly decreasing in $\lambda$ and $K\delta/(p\gamma+\delta)$ does not depend on $\lambda$, so their maximum never falls below $K\delta/(p\gamma+\delta)$.
\qed

\subsection{Proof of Lemma~\ref{lemma:comp_statics}}

From Lemma~\ref{lemma:paradox}(ii), 
the condition $\mathbb{W}<\infty$ is equivalent to $\lambda<\mu_{\mathrm{eff}}$. 
When $\lambda<\mu_{\mathrm{eff}}$, the first branch of~\eqref{eq:umax}, from Lemma~\ref{lemma:paradox}(i), applies, so $U=\lambda p/(K\delta)$ and $\partial U/\partial\gamma=0$.
\qed

\subsection{Proof of Lemma~\ref{lemma:alpha-u-wait}}

Throughout this proof, $\alpha$ denotes the long-run fraction of resolved offers that are accepted, taken as given. Unlike in Lemma~\ref{lemma:paradox}, acceptance probabilities depend on elapsed waits, and successive slot cycles need not be independent. We therefore use long-run offer counts rather than an i.i.d.\ renewal-cycle argument. 

Enumerate resolved offers across the center, and let $W_m$ be the recipient's elapsed wait at contact and $Y_m$ the acceptance indicator for offer $m$. Under the model's  conditional acceptance specification,  
$Y_m - p_0 \cdot e^{-\theta W_m}$ has conditional mean zero given the history immediately before its outcome is revealed. These differences are bounded, so the martingale strong law gives
\begin{align*}
    \frac{1}{n}\sum_{m=1}^{n}Y_m - \frac{1}{n}\sum_{m=1}^{n}p_0 \cdot e^{-\theta W_m}  \to 0,  \quad\text{almost surely}.
\end{align*}
Since the first average converges to $\alpha$, the average acceptance frequency equals the limiting average acceptance probability, as in~\eqref{eq:alpha_def}. For $\alpha > 0$, the corresponding long-run numbers of offers and wasted offers per enrollment are $1/\alpha$ and $(1-\alpha)/\alpha$, respectively. 

We first calculate the effective clearance rate, assuming a family is always available whenever a slot can issue an offer. Let $N(t)$ be the number of offers resolved by time $t$, and let $H(t)$ be the total occupied-slot time accumulated across the $K$ slots. Under the aforementioned assumption, every unoccupied slot holds an outstanding offer. Offer resolutions therefore have cumulative intensity $\gamma (Kt - H(t))$, while departures have cumulative intensity $\delta H(t)$. The counting-process martingale strong law gives errors of order $o(t)$ almost surely. Moreover, accepted offers and departures differ by at most $K$, and the number of accepted offers is $\alpha N(t) + o(N(t))$. Since the offer-resolution intensity is bounded by $K\gamma$, $N(t) = O(t)$ almost surely. Consequently, 
\begin{align*}
    N(t) = \gamma (Kt - H(t)) + o(t),
    \qquad 
    \alpha N(t) = \delta H(t) + o(t).
\end{align*}
Solving these identities yields
\begin{align*}
    \frac{N(t)}{t} \to \frac{K\gamma\delta}{\alpha\gamma+\delta}, 
    \qquad 
    \frac{H(t)}{Kt} \to \frac{\alpha \gamma}{\alpha\gamma + \delta}. 
\end{align*}
Thus, the long-run fraction of time that a slot stands open is $\delta/(\alpha\gamma+\delta)$, reproducing~\eqref{eq:open_share} with $p$ replaced by $\alpha$, and the effective clearance rate is 
\begin{align*}
    \mu_{\mathrm{eff}}(\alpha) =\frac{K\gamma\delta}{\alpha\gamma+\delta}.
\end{align*}    
This establishes~\eqref{eq:clearing_rate} with $p$ replaced by $\alpha$, without assuming that slots have identical realized acceptance frequencies.

\noindent\textit{(i)} We first derive the expression for $U(\alpha)$. We consider two cases. 

\textbf{Case 1:} $\lambda<\mu_{\mathrm{eff}}(\alpha)$. The waitlist is positive recurrent, so flow balance gives an offer-resolution rate $\lambda$. A fraction $\alpha$ of resolved offers are accepted, yielding an enrollment rate of $\lambda\alpha$. Since each enrollment occupies one slot for a mean $1/\delta$ time units, Little's law applied to the occupied slots gives a mean of $\lambda\alpha/\delta$ occupied slots and therefore
\begin{align*}
    U(\alpha)=\frac{\lambda\alpha}{K\delta}.
\end{align*}

\textbf{Case 2:} $\lambda\geq\mu_{\mathrm{eff}}(\alpha)$. Requests then arrive at least as fast as the center removes families, so the long-run fraction of time with an empty waitlist is zero. From the analysis above, the long-run fraction of time that the slot stands occupied gives 
\begin{align*}
    U(\alpha)=\frac{\alpha\gamma}{\alpha\gamma+\delta}.
\end{align*}

For $\alpha>0$, 
\begin{align*}
    \lambda<\mu_{\mathrm{eff}}(\alpha)
    \quad\Longleftrightarrow\quad
    \frac{\lambda\alpha}{K\delta}<\frac{\alpha\gamma}{\alpha\gamma+\delta}. 
\end{align*}
Thus, the applicable branch in each case is the minimum in~\eqref{eq:alpha_u}, and the two branches agree at $\lambda=\mu_{\mathrm{eff}}(\alpha)$. For $\alpha=0$, both branches vanish, consistently with the aggregate accounting above. Here, $\alpha=0$ means that the acceptance fraction vanishes; it does not require that no offers are ever accepted.

\noindent\textit{(ii)} We next derive the expression for $\mathbb{W}(\alpha)$.  As in the proof of Lemma~\ref{lemma:paradox}(ii), based on the model introduced in~\S\ref{subsec:impatience},  the time from joining the waitlist to receiving an offer is assumed to be exponential with rate $\mu_{\mathrm{eff}}(\alpha)-\lambda$ when $\lambda<\mu_{\mathrm{eff}}(\alpha)$, and with infinite mean otherwise.

\textbf{Case 1:} $\lambda<\mu_{\mathrm{eff}}(\alpha)$. The exponentially distributed $w$ with rate $\mu_{\mathrm{eff}}(\alpha)-\lambda$ has a finite mean of 
\begin{align*}
    \mathbb{W}=\frac{1}{\mu_{\mathrm{eff}(\alpha)}-\lambda}
    =\frac{1}{\frac{K\gamma\delta}{\alpha\gamma+\delta}-\lambda}
    =\frac{\alpha\gamma+\delta}{K\gamma\delta-\alpha\gamma\lambda-\delta\lambda}.
\end{align*}

\textbf{Case 2:} $\lambda\geq\mu_{\mathrm{eff}}(\alpha)$. We set $\mathbb{W}=\infty$.

Both cases give~\eqref{eq:alpha_wait}, with Case 2 encoded by the positive part in~\eqref{eq:alpha_wait}. 
\qed

\subsection{Proof of Theorem~\ref{thm:fixed_point}}

Let $\Phi$ denote the right-hand side of~\eqref{eq:alpha_def}. When $\lambda < \mu_{\mathrm{eff}}(\alpha)$, the model assumes that the waiting time of a family from joining the waitlist to receiving an offer is exponentially distributed with rate $\mu_{\mathrm{eff}}(\alpha) - \lambda$, and therefore, 
\begin{align*}
    \Phi(\alpha) = p_0 \int_{0}^{\infty} e^{-\theta w} (\mu_{\mathrm{eff}}(\alpha) - \lambda) e^{-(\mu_{\mathrm{eff}}(\alpha) - \lambda)w} = \frac{p_0\bigl(\mu_{\mathrm{eff}}(\alpha)-\lambda\bigr)} {\mu_{\mathrm{eff}}(\alpha)-\lambda+\theta}.
\end{align*}
When $\lambda\geq\mu_{\mathrm{eff}}(\alpha)$, waiting time explodes, so we extend $\Phi$ by setting $\Phi(\alpha)=0$. Thus, using $\mathbb{W}(\alpha) = 1/(\mu_{\mathrm{eff}}(\alpha)-\lambda)^+$ from~\eqref{eq:alpha_wait}, $\Phi$ can be equivalently written as 
\begin{align*}
    \Phi(\alpha) =\frac{p_0(\mu_{\mathrm{eff}}(\alpha)-\lambda)^{+}}{(\mu_{\mathrm{eff}}(\alpha)-\lambda)^{+} + \theta} =\frac{p_0}{1+\theta\mathbb{W}(\alpha)},
\end{align*}
where the last expression is interpreted as zero when $\mathbb{W}(\alpha)=\infty$. This establishes~\eqref{eq:alpha_fp}. 

Since
\begin{align*}
    \mu_{\mathrm{eff}}(\alpha) =\frac{K\gamma\delta}{\alpha\gamma+\delta}
\end{align*}
is continuous and strictly decreasing in $\alpha$, the expression for $\Phi$ as a function of $\mu_{\mathrm{eff}}$ implies that $\Phi$ is continuous and nonincreasing, including at the stability boundary. Moreover, $\theta>0$ implies
\begin{align*}
    0 \leq \Phi(\alpha) < p_0.
\end{align*}

Consequently, $\alpha-\Phi(\alpha)$ is continuous and strictly increasing on $[0,p_0]$, with
\begin{align*}
    0-\Phi(0)\leq0, 
    \qquad
    p_0-\Phi(p_0)>0.
\end{align*}
The intermediate value theorem gives a unique zero $\alpha^*\in[0,p_0)$. Finally, the identity $\Phi(\alpha)=p_0/(1+\theta\mathbb{W}(\alpha))$ shows that~\eqref{eq:alpha_def} and~\eqref{eq:alpha_fp} have the same solutions. Hence $\alpha^*$ is also the unique solution of~\eqref{eq:alpha_fp}. 
\qed

\subsection{Auxiliary Results}

The following auxiliary result will be used in multiple subsequent proofs.

\begin{lemma}\label{lemma:alpha-star}
\quad
\begin{enumerate}
    \item [(i)] If $\lambda<K\gamma$, then
    \begin{align*}
        0<\alpha^*<\bar{\alpha} :=\frac{\delta(K\gamma-\lambda)}{\lambda\gamma},
        \qquad
        \lambda<\mu_{\mathrm{eff}}(\alpha^*),
        \qquad
        \mathbb{W}(\alpha^*)<\infty.
    \end{align*}

    \item [(ii)] If $\lambda\geq K\gamma$, then
    \begin{align*}
        \alpha^*=0,
        \qquad
        \lambda\geq\mu_{\mathrm{eff}}(\alpha^*),
        \qquad
        \mathbb{W}(\alpha^*)=\infty.
    \end{align*}
\end{enumerate}
\end{lemma}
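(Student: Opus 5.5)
The plan is to work through the fixed-point map $\Phi(\alpha)=p_0/(1+\theta\mathbb{W}(\alpha))$ from the proof of Theorem~\ref{thm:fixed_point}, and to locate $\alpha^*$ relative to the stability threshold $\bar\alpha$. The first step is to characterize stability in terms of $\alpha$. By \eqref{eq:alpha_wait}, $\mathbb{W}(\alpha)<\infty$ if and only if $K\gamma\delta-\alpha\gamma\lambda-\delta\lambda>0$, that is, if and only if $\alpha<\bar\alpha=\delta(K\gamma-\lambda)/(\lambda\gamma)$; equivalently, $\lambda<\mu_{\mathrm{eff}}(\alpha)$. Hence on $[0,\bar\alpha)$ (when $\bar\alpha>0$) we have $\Phi(\alpha)>0$, while for $\alpha\ge\bar\alpha$ we have $\Phi(\alpha)=0$ by the extension adopted in that proof.

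For (ii), suppose $\lambda\ge K\gamma$. Then $\bar\alpha\le 0$, so every $\alpha\ge0$ satisfies $\lambda\ge\mu_{\mathrm{eff}}(\alpha)$. This holds in particular at $\alpha=0$, since $\mu_{\mathrm{eff}}(0)=K\gamma$. Therefore $\Phi\equiv0$ on $[0,p_0]$, the unique fixed point is $\alpha^*=0$, and $\mathbb{W}(\alpha^*)=\infty$.

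For (i), suppose $\lambda<K\gamma$, so that $\bar\alpha>0$ and $\Phi(0)=p_0(K\gamma-\lambda)/(K\gamma-\lambda+\theta)>0$. Consequently $\alpha=0$ is not a fixed point, and $\alpha^*>0$. Next, rule out $\alpha^*\ge\bar\alpha$. On that region $\Phi(\alpha)=0$, so a fixed point there would force $\alpha^*=0<\bar\alpha$, a contradiction. Hence $0<\alpha^*<\bar\alpha$, which by the first step gives $\lambda<\mu_{\mathrm{eff}}(\alpha^*)$ and $\mathbb{W}(\alpha^*)<\infty$. Uniqueness and existence are inherited from Theorem~\ref{thm:fixed_point}. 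I would also note that $\bar\alpha$ may exceed $p_0$; this is harmless, since Theorem~\ref{thm:fixed_point} already gives $\alpha^*<p_0$, so $\alpha^*<\min\{p_0,\bar\alpha\}$.

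The main subtlety is the boundary case $\theta=0$. There $\Phi$ is not continuous at $\bar\alpha$ ($\Phi=p_0$ below $\bar\alpha$ and $0$ above under the extension), and Theorem~\ref{thm:fixed_point}'s strict inequality used $\theta>0$. I would state that the lemma is applied in the regime $\theta>0$ of \S\ref{subsec:impatience}. If $\theta=0$ must be covered, the fixed point is $\alpha^*=p_0$, and then $\alpha^*<\bar\alpha$ becomes an explicit stability assumption. Any remaining care is in checking the equivalence $\alpha<\bar\alpha\iff\lambda<\mu_{\mathrm{eff}}(\alpha)$ by cross-multiplying positive denominators.
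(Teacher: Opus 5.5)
Your proposal is correct and takes essentially the same route as the paper. Both arguments split $\Phi$ into the cases $\Phi>0$ in the stable region and $\Phi=0$ otherwise, obtain $\alpha^*>0$ from $\Phi(0)>0$ and $\alpha^*<\bar\alpha$ from $\alpha^*=\Phi(\alpha^*)>0$ (you argue this by contrapositive), and get $\Phi\equiv 0$ when $\lambda\ge K\gamma$; your side remark about $\theta=0$ matches the paper, whose proof also explicitly uses $\theta>0$.
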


\noindent\textit{Proof of Lemma~\ref{lemma:alpha-star}.}
Let $\Phi$ denote the right-hand side of~\eqref{eq:alpha_def}, or equivalently, the right-hand side of~\eqref{eq:alpha_fp}. We know that $\mathbb{W}(\alpha) < \infty$ if $\lambda<\mu_{\mathrm{eff}}(\alpha)$, and $\mathbb{W} = \infty$ if $\lambda \geq \mu_{\mathrm{eff}}(\alpha)$. Since $\Phi(\alpha) = p_0/(1+\theta\mathbb{W}(\alpha))$ with $p_0 > 0$ and $\theta > 0$, it follows that 
\begin{align}\label{eq:Phi-two-cases}
    \Phi(\alpha)>0
    \ \mbox{ if } \
    \lambda<\mu_{\mathrm{eff}}(\alpha), 
    \quad
    \Phi(\alpha)=0
    \ \mbox{ if } \
    \lambda\geq\mu_{\mathrm{eff}}(\alpha). 
\end{align}

\textit{(i)} $\lambda<K\gamma$. Since $\mu_{\mathrm{eff}}(0)=K\gamma>\lambda$, we have $\Phi(0)>0$, so zero cannot be a fixed point and thus $\alpha^*>0$. The fixed-point equation $\Phi(\alpha^*) = \alpha^*$ then gives $\Phi(\alpha^*)>0$, which, by~\eqref{eq:Phi-two-cases}, implies
\begin{align*}
    \lambda<\mu_{\mathrm{eff}}(\alpha^*)
    =\frac{K\gamma\delta}{\alpha^*\gamma+\delta}.
\end{align*}
Rearranging gives 
\begin{align*}
    \alpha^* < \frac{\delta(K\gamma-\lambda)}{\lambda\gamma} =: \bar{\alpha},
\end{align*}
and Lemma~\ref{lemma:alpha-u-wait}(ii) yields $\mathbb{W}(\alpha^*)<\infty$.

\textit{(ii)} $\lambda\geq K\gamma$. For every $\alpha\geq0$,
\begin{align*}
    \mu_{\mathrm{eff}}(\alpha)
    =\frac{K\gamma\delta}{\alpha\gamma+\delta}
    \leq K\gamma\leq\lambda.
\end{align*}
Hence, by~\eqref{eq:Phi-two-cases}, $\Phi\equiv0$. The fixed-point equation $\Phi(\alpha^*) = \alpha^*$ then gives $\alpha^*=0$. Consequently, $\mu_{\mathrm{eff}}(\alpha^*)=\mu_{\mathrm{eff}}(0) = K\gamma\leq\lambda$ and $\mathbb{W}(\alpha^*)=\infty$.
\qed

\subsection{Proof of Proposition~\ref{prop:amplified_paradox}}

From Theorem~\ref{thm:fixed_point}, $\alpha^*<p_0$. Moreover, note that both functions inside the minimum in~\eqref{eq:alpha_u} are strictly increasing in $\alpha$, so their minimum, namely, the slot utilization, is strictly increasing in $\alpha$. Hence, $\alpha^* < p_0$ implies $U(\alpha^*)<U(p_0)$. 

We now compare the expected waiting times by considering three cases.

\textbf{Case 1:} $\lambda<\mu_{\mathrm{eff}}(p_0)$. Both $\alpha^*$ and $p_0$ lie in the finite-wait regime, where differentiating~\eqref{eq:alpha_wait} gives
\begin{align*}
    \frac{\partial\mathbb{W}(\alpha)}{\partial\alpha} 
    =\frac{K\gamma^2\delta}{\left(K\gamma\delta-\lambda(\alpha\gamma+\delta)\right)^2}>0,
\end{align*}
where the denominator is positive because $\mu_{\mathrm{eff}}(\alpha) > \mu_{\mathrm{eff}}(p_0) > \lambda$. Hence, $\alpha^*<p_0$ implies $\mathbb{W}(\alpha^*)<\mathbb{W}(p_0)$.

\textbf{Case 2:} $\mu_{\mathrm{eff}}(p_0)\leq\lambda<K\gamma$. Equation~\eqref{eq:alpha_wait} gives $\mathbb{W}(p_0)=\infty$, whereas Lemma~\ref{lemma:alpha-star} gives $\mathbb{W}(\alpha^*)<\infty$. Hence, $\mathbb{W}(\alpha^*)<\mathbb{W}(p_0)$.

\textbf{Case 3:} $\lambda\geq K\gamma$. Lemma~\ref{lemma:alpha-star} gives $\alpha^*=0$ and $\mathbb{W}(\alpha^*)=\infty$. Moreover, since $\mu_{\mathrm{eff}}(p_0)<K\gamma\leq\lambda$, the constant-$p_0$ waitlist is unstable as well, so $\mathbb{W}(p_0)=\infty$. Therefore, $\mathbb{W}(p_0)=\mathbb{W}(\alpha^*)=\infty$. 
\qed

\subsection{Proof of Lemma~\ref{lemma:alpha_cs}}

For $x\in\{\lambda,\theta,\gamma\}$, write $\Phi(\alpha;x)$ for the right-hand side of~\eqref{eq:alpha_def} and $\alpha^*(x)$ for its unique fixed point. By the proof of Theorem~\ref{thm:fixed_point}, 
\begin{align*}
    \Phi(\alpha;x) = p_0\frac{(\mu_{\mathrm{eff}}(\alpha)-\lambda)^{+}}{(\mu_{\mathrm{eff}}(\alpha)-\lambda)^{+}+\theta},
    \qquad
    \mu_{\mathrm{eff}}(\alpha) =\frac{K\gamma\delta}{\alpha\gamma+\delta}.
\end{align*}


\noindent\textit{(i)} Monotonicity of $\alpha^*$.
At fixed $\alpha$, the preceding expression shows that $\Phi$ is nonincreasing in $\lambda$ and $\theta$. It is nondecreasing in $\gamma$, since
\begin{align*}
    \frac{\partial\mu_{\mathrm{eff}}(\alpha)}{\partial\gamma} = \frac{K\delta^2}{(\alpha\gamma+\delta)^2}>0.
\end{align*}

To transfer these comparisons to the fixed point, recall that $\alpha-\Phi(\alpha;x)$ is strictly increasing in $\alpha$, with unique zero $\alpha^*(x)$. If $\Phi(\alpha;x')\leq\Phi(\alpha;x)$ for every $\alpha$, then
\begin{align*}
    \alpha^*(x)-\Phi(\alpha^*(x);x') \geq \alpha^*(x)-\Phi(\alpha^*(x);x) =0,
\end{align*}
and hence $\alpha^*(x')\leq\alpha^*(x)$. Reversing the inequalities gives the corresponding comparison when $\Phi$ increases. Thus, $\alpha^*$ is nonincreasing in $\lambda$ and $\theta$ and nondecreasing in $\gamma$. Wherever the derivatives exist,
\begin{align*}
    \frac{\partial\alpha^*}{\partial\lambda}\leq0,
    \qquad
    \frac{\partial\alpha^*}{\partial\theta}\leq0,
    \qquad
    \frac{\partial\alpha^*}{\partial\gamma}\geq0.
\end{align*}

\noindent\textit{(ii)} Monotonicity of waiting time and utilization.
We consider two cases. 

\textbf{Case 1:} $\lambda<K\gamma$. Lemma~\ref{lemma:alpha-star} gives $\alpha^*>0$ and $\mathbb{W}(\alpha^*)<\infty$. The fixed-point identity~\eqref{eq:alpha_fp} and the utilization formula~\eqref{eq:alpha_u} then yield
\begin{align*}
    \mathbb{W}(\alpha^*)
    =\frac{1}{\theta}\left(\frac{p_0}{\alpha^*}-1\right),
    \qquad
    U(\alpha^*)=\frac{\lambda\alpha^*}{K\delta}.
\end{align*}
Since $\alpha^*$ is nondecreasing in $\gamma$ by part~(i), the expected waiting time is nonincreasing and utilization is nondecreasing in $\gamma$.

\textbf{Case 2:} $\lambda\geq K\gamma$. Lemma~\ref{lemma:alpha-star} and~\eqref{eq:alpha_u} give
\begin{align*}
    \alpha^*=0,\qquad
    \mathbb{W}(\alpha^*)=\infty,\qquad
    U(\alpha^*)=0.
\end{align*}
These values extend both monotonicity conclusions across $\gamma=\lambda/K$, with waiting time interpreted as an extended-real-valued function. Consequently,
\begin{align*}
    \frac{\partial\mathbb{W}(\alpha^*)}{\partial\gamma}\leq0,
    \qquad
    \frac{\partial U(\alpha^*)}{\partial\gamma}\geq0
\end{align*}
where the respective quantities are finite and differentiable.
\qed

\section{Proofs from Section~\ref{sec:reverify}}

\subsection{Proof of Theorem~\ref{thm:alpha_nu}}

For simplicity, write $x:=\phi\nu$. 

Active and phantom families join at rates $\lambda p_0$ and $\lambda(1-p_0)$, respectively, and each active family becomes phantom at rate $\theta$. Every listed family receives an offer at rate $1/\mathbb{W}(\nu)$, and verification removes each phantom at rate $x$. Thus, the active and phantom flow balances read
\begin{align*}
    \lambda p_0=\left(\theta+\frac{1}{\mathbb{W}(\nu)}\right)Q_a(\nu),
    \qquad
    \lambda(1-p_0)+\theta Q_a(\nu)=\left(x+\frac{1}{\mathbb{W}(\nu)}\right)Q_p(\nu).
\end{align*}
Solving the above two equations yields
\begin{align*}
    Q_a(\nu)=\frac{\lambda p_0 \mathbb{W}(\nu)}{1+\theta \mathbb{W}(\nu)},
    \qquad
    Q_p(\nu)=\frac{\lambda \mathbb{W}(\nu)(1-p_0+\theta \mathbb{W}(\nu))}{(1+\theta \mathbb{W}(\nu))(1+x \mathbb{W}(\nu))}.
\end{align*}
Note that capacity constraint reads
\begin{align}\label{eq:capacity_const}
    \left(\frac{Q_a(\nu)}{\delta} + \frac{Q_a(\nu) + Q_p(\nu)}{\gamma} \right) \frac{1}{\mathbb{W}} = K.
\end{align}
Substituting for $Q_a(\nu)$ and $Q_p(\nu)$ using their expressions derived in the above then implies 
\begin{align*}
    \frac{\lambda p_0}{1+\theta \mathbb{W}(\nu)} \left(\frac{1}{\delta} + \frac{1}{\gamma}\right) + \frac{\lambda (1-p_0+\theta \mathbb{W}(\nu))}{(1+\theta \mathbb{W}(\nu))(1+x\mathbb{W}(\nu))} \cdot \frac{1}{\gamma} = K, 
\end{align*}
which establishes~\eqref{eq:capacity_const-alt}.

Write
\begin{align*}
    \Psi(w,x) := \frac{\lambda p_0}{1+\theta w} \left(\frac{1}{\delta} + \frac{1}{\gamma}\right) + \frac{\lambda (1-p_0+\theta w)}{(1+\theta w)(1+xw)} \cdot \frac{1}{\gamma} - K.
\end{align*}
Then, $\Psi(\mathbb{W}(\nu),x) = 0$. 
Differentiating $\Psi(w,x)$ with respect to $w$ yields
\begin{align*}
    \frac{\partial \Psi(w,x)}{\partial w} = -\frac{\lambda p_0\theta}{\delta(1+\theta w)^2} - \frac{\lambda p_0\theta xw}{\gamma(1+\theta w)^2(1+xw)} - \frac{\lambda x(1-p_0+\theta w)}{\gamma(1+\theta w)(1+xw)^2} < 0.
\end{align*}
Thus, $\Psi(w,x)$ is strictly decreasing in $w$. 
Evaluating $\Psi$ at boundary values, for $x \geq 0$, 
\begin{align*}
    \Psi(0+,x) = \lambda p_0 \left(\frac{1}{\delta} + \frac{1}{\gamma}\right) + \lambda (1-p_0) \cdot \frac{1}{\gamma} - K = \frac{\lambda p_0}{\delta} + \frac{\lambda}{\gamma} - K > 0,
\end{align*}
where the strict inequality follows from $\lambda p_0 > K\gamma\delta/(\gamma+\delta)$ and $\lambda \geq \lambda p_0$.  When $x=0$, $\Psi(\infty,0)=\lambda/\gamma-K<0$; when $x>0$,
$\Psi(\infty,x)=-K<0$. Hence, for any given $x$, there exists a unique positive root of $\Psi(w,x)=0$, that is, $\mathbb{W}(\nu)$. 
Additionally, for $w > 0$, 
\begin{align*}
     \frac{\partial \Psi(w,x)}{\partial x} = -\frac{\lambda (1-p_0+\theta w)w}{(1+\theta w)(1+xw)^2} \cdot \frac{1}{\gamma} < 0. 
\end{align*}
Then, the implicit-function theorem implies 
\begin{align*}
    \frac{d\mathbb{W}(\nu)}{d\nu} = -\phi\,\frac{\partial \Psi(w,x)/\partial x}{\partial \Psi(w,x)/\partial w}<0.    
\end{align*}
implying that $\mathbb{W}(\nu)$ is strictly decreasing in $\nu$.

Because offers reach all listed families at the same rate and are accepted precisely by active families, the acceptance probability equals the active share:
\begin{align*}
    \alpha^*(\nu)
    =\frac{Q_a(\nu)}{Q_a(\nu)+Q_p(\nu)}
    =\frac{p_0(1+x\mathbb{W}(\nu))}{1+(\theta+p_0x)\mathbb{W}(\nu)},
\end{align*}
which establishes~\eqref{eq:alpha_nu}. 
Since $\mathbb{W}(\nu)$ strictly decreases in $\nu$, we have
\begin{align*}
    \frac{Q_a(\nu)}{\mathbb{W}(\nu)} = \frac{\lambda p_0}{1+\theta\mathbb{W}(\nu)}
\end{align*}
strictly increases. The capacity balance implies that
\begin{align*}
    \frac{Q_a(\nu)+Q_p(\nu)}{\mathbb{W}(\nu)} = K\gamma-\frac{\gamma}{\delta}\frac{\lambda p_0}{1+\theta\mathbb{W}(\nu)},
\end{align*}
which strictly decreases. Their ratio, $\alpha^*(\nu)=Q_a(\nu)/(Q_a(\nu)+Q_p(\nu))$, therefore strictly increases in $\nu$.

At $\nu=0$, the total flow balance gives
\begin{align*}
    Q_a(0)+Q_p(0)=\lambda\mathbb{W}(0).
\end{align*}
The capacity balance therefore implies
\begin{align*}
    \frac{Q_a(0)}{\mathbb{W}(0)}=\delta\left(K-\frac{\lambda}{\gamma}\right).
\end{align*}
Consequently,
\begin{align*}
    \alpha^*(0)=\frac{Q_a(0)}{Q_a(0)+Q_p(0)}=\frac{\delta(K\gamma-\lambda)}{\lambda\gamma}.
\end{align*}
Using $Q_a(0)/\mathbb{W}(0)=\lambda p_0/(1+\theta\mathbb{W}(0))$ yields
\begin{align*}
    \mathbb{W}(0) = \frac{1}{\theta}\left(\frac{\lambda p_0}{\delta(K-\lambda/\gamma)}-1\right) < \infty. 
\end{align*}

Because $Q_p(\nu)>0$ for every finite $\nu$, the capacity
balance implies
\begin{align*}
    \frac{\lambda p_0}{1+\theta\mathbb{W}(\nu)}\left(\frac{1}{\delta}+\frac{1}{\gamma}\right)<K.    
\end{align*}
Hence, 
\begin{align*}
    \mathbb{W}(\nu)>\frac{1}{\theta}\left(\frac{\lambda p_0(\gamma+\delta)}{K\gamma\delta}-1\right)>0.
\end{align*}
Also, the expression for $Q_p(\nu)$ from~\eqref{eq:AP_nu} implies $0\leq Q_p(\nu)\leq\lambda/(\phi\nu)$ for $\nu>0$. Thus, $Q_p(\nu)\to0$, as $\nu \to \infty$, whereas the preceding lower bound on $\mathbb{W}(\nu)$ ensures that $Q_a(\nu)$ remains bounded away from zero, recalling the expression for $Q_a(\nu)$ from~\eqref{eq:AP_nu}. With finite $Q_a(\nu)$, we have $\lim_{\nu\to\infty} \alpha^*(\nu) = \lim_{\nu\to\infty} Q_a(\nu) / (Q_a(\nu) + Q_p(\nu)) = 1$. Substituting for $x \to \infty$ in~\eqref{eq:capacity_const-alt} yields 
\begin{align*}
    \lim_{\nu\to\infty}\mathbb{W}(\nu) = \frac{1}{\theta} \left(\frac{\lambda p_0(\gamma+\delta)}{K\gamma\delta}-1\right) > 0.
\end{align*}
Recalling that $\mathbb{W}(\nu)$ strictly decreases in $\nu$, it follow that $\mathbb{W}(\nu) \in (0,\infty)$ for all $\nu \geq 0$. 
\qed

\subsection{Proof of Lemma~\ref{lemma:reverify_composition}}

Recall from the proof of Theorem~\ref{thm:alpha_nu} that the active and phantom balance equations read 
\begin{align}\label{eq:balance-a-p}
    \lambda p_0=\left(\theta+\frac{1}{\mathbb{W}(\nu)}\right)Q_a(\nu),
    \qquad
    \lambda(1-p_0)+\theta Q_a(\nu)
    =\left(\phi\nu+\frac{1}{\mathbb{W}(\nu)}\right)Q_p(\nu).
\end{align}
Solving these two equations gives
\begin{align*}
    Q_a(\nu)=\frac{\lambda p_0 \mathbb{W}(\nu)}{1+\theta \mathbb{W}(\nu)},
    \qquad
    Q_p(\nu)=\frac{\lambda \mathbb{W}(\nu)(1-p_0+\theta \mathbb{W}(\nu))}{(1+\theta \mathbb{W}(\nu))(1+x \mathbb{W}(\nu))},
\end{align*}
which establishes~\eqref{eq:AP_nu}. 

From Theorem~\ref{thm:alpha_nu}, $\mathbb{W}(\nu)$ is finite for every $\nu \geq 0$ and strictly decreases in $\nu$. 
Then, it is clear that $Q_a(\nu) = \lambda p_0 / (1/\mathbb{W}(\nu) + \theta)$ strictly decreases in $\nu$. Note that
\begin{align*}
    Q_p(\nu) = \frac{\lambda(1-p_0)+\theta Q_a(\nu)}{\phi\nu+1/\mathbb{W}(\nu)}
\end{align*}
has a positive, strictly decreasing numerator and a positive, strictly increasing denominator. Hence, $Q_p(\nu)$ strictly decreases in $\nu$.  In particular, when $\nu \to \infty$, the first balance equation gives $\theta Q_a(\nu)\leq\lambda p_0$. The expression for $Q_p(\nu)$ therefore implies
\begin{align*}
    0\leq Q_p(\nu)\leq\frac{\lambda}{\phi\nu},
    \qquad \nu>0.
\end{align*}
Since $\phi>0$, it follows that $\lim_{\nu\to\infty}Q_p(\nu)=0$.

To derive the expression for the slot utilization, recall that enrollments occur at rate
\begin{align*}
    \frac{Q_a(\nu)}{\mathbb{W}(\nu)} = \frac{\lambda p_0}{1+\theta \mathbb{W}(\nu)}.
\end{align*}
Each enrollment occupies one slot for a mean duration $1/\delta$. Little's law therefore gives
\begin{align*}
    U(\nu)=\frac{\lambda p_0}{K\delta(1+\theta \mathbb{W}(\nu))},
\end{align*}
which establishes~\eqref{eq:U}. Since $\mathbb{W}(\nu)$ strictly decreases in $\nu$, it is clear that $U(\nu)$ strictly increases in $\nu$. 
\qed

\subsection{Proof of Proposition~\ref{prop:optimal_review}}

Define enrollment throughput as 
\begin{align*}
    e := \frac{\lambda p_0}{1 + \theta \mathbb{W}(\nu)}.
\end{align*}
From Theorem~\ref{thm:alpha_nu}, $\mathbb{W}(\nu)$ strictly decreases in $\nu$. Hence, $e$ increases continuously and strictly from $\delta(K-\lambda/\gamma)$ to $K\gamma\delta/(\gamma+\delta)$ as $\nu$ increases from $0$ to $\infty$; see the last part of the proof of Theorem~\ref{thm:alpha_nu}. Thus, minimizing total cost over $\nu\geq0$ is equivalent to minimizing it over
\begin{align*}
    e\in\left[ \delta\left(K-\frac{\lambda}{\gamma}\right), \frac{K\gamma\delta}{\gamma+\delta}\right).
\end{align*}
Below, derivatives with respect to $e$ are evaluated along this equilibrium relationship.

From~\eqref{eq:AP_nu}, 
\begin{align}\label{eq:Qa-e}
    Q_a(\nu) = e \mathbb{W}(\nu).
\end{align}
From~\eqref{eq:capacity_const}, 
\begin{align*}
    Q_p(\nu) = K\gamma \mathbb{W}(\nu) - \left(1+\frac{\gamma}{\delta}\right) Q_a(\nu) = K\gamma \mathbb{W}(\nu) - \left(1+\frac{\gamma}{\delta}\right) e \mathbb{W}(\nu),
\end{align*}
where the second equality follows from~\eqref{eq:Qa-e}. Thus, 
\begin{align}\label{eq:Qp-e}
    Q_p(\nu) = \left[K \gamma - \left(1+\frac{\gamma}{\delta}\right) e\right] \mathbb{W}(\nu).
\end{align}
Adding~\eqref{eq:Qa-e} and~\eqref{eq:Qp-e} gives
\begin{align}\label{eq:Qtotal-e}
    Q_a(\nu)+Q_p(\nu) =\left(K\gamma-\frac{\gamma}{\delta}e\right) \mathbb{W}(\nu).
\end{align}

Next, we examine how each of the three components of the objective function~\eqref{eq:reverify_cost} varies with $e$. 

\underline{1st component of~\eqref{eq:reverify_cost}:} From~\eqref{eq:Qp-e},
\begin{align*}
    \frac{sQ_p(\nu)}{\mathbb{W}(\nu)} = s\left[K \gamma - \left(1+\frac{\gamma}{\delta}\right) e\right],
\end{align*}
which is linearly decreasing in $e$. 

\underline{2nd component of~\eqref{eq:reverify_cost}:} From~\eqref{eq:U}, 
\begin{align*}
    rK\left(1 - U(\nu)\right) = rK\left(1-\frac{e}{K\delta}\right),
\end{align*}
which is also linearly decreasing in $e$. 

\underline{3rd component of~\eqref{eq:reverify_cost}:} Write $x := \phi \nu$. Then
\begin{align*}
    c_\nu\nu\left(Q_a(\nu)+Q_p(\nu)\right) =\frac{c_\nu}{\phi} x\left(Q_a(\nu)+Q_p(\nu)\right).
\end{align*}
The steady-state flow balance for the entire waitlist is
\begin{align*}
    \lambda =\frac{Q_a(\nu)+Q_p(\nu)}{\mathbb{W}(\nu)} + xQ_p(\nu),
\end{align*}
because families leave the waitlist through either an offer
or verification. Using~\eqref{eq:Qtotal-e}, this becomes
\begin{align}\label{eq:xQp-e}
    xQ_p(\nu) = \lambda - K\gamma + \frac{\gamma}{\delta}e.
\end{align}
Since $e<K\gamma\delta/(\gamma+\delta)$, the quantity
$K\gamma-(1+\gamma/\delta)e$ is strictly positive.
Consequently, \eqref{eq:Qp-e}--\eqref{eq:xQp-e} imply
\begin{align*}
    x\left(Q_a(\nu)+Q_p(\nu)\right)
    =xQ_p(\nu) \frac{Q_a(\nu)+Q_p(\nu)}{Q_p(\nu)}
    =\frac{\left(\lambda-K\gamma+\frac{\gamma}{\delta}e\right)\left(K\gamma-\frac{\gamma}{\delta}e\right)}{K\gamma-\left(1+\frac{\gamma}{\delta}\right)e}.
\end{align*}
To expand this expression, observe that
\begin{align*}
    K\gamma-\frac{\gamma}{\delta}e
    &=\frac{K\gamma\delta}{\gamma+\delta}+\frac{\gamma}{\gamma+\delta}
      \left[K\gamma-\left(1+\frac{\gamma}{\delta}\right)e\right],\\
    \lambda-K\gamma+\frac{\gamma}{\delta}e
    &=\lambda-\frac{K\gamma\delta}{\gamma+\delta}-\frac{\gamma}{\gamma+\delta}
      \left[K\gamma-\left(1+\frac{\gamma}{\delta}\right)e\right].
\end{align*}
Substituting these two identities into the numerator,
expanding the product, and dividing by the denominator yields
\begin{align}\label{eq:xQtotal-e}
    x\left(Q_a(\nu)+Q_p(\nu)\right)
    =\frac{\frac{K\gamma\delta}{\gamma+\delta}\left(\lambda-\frac{K\gamma\delta}{\gamma+\delta}\right)}{K\gamma-\left(1+\frac{\gamma}{\delta}\right)e}
    +\frac{\gamma}{\gamma+\delta}\left(\lambda-\frac{2K\gamma\delta}{\gamma+\delta}\right)
    -\left(\frac{\gamma}{\gamma+\delta}\right)^2\left[K\gamma-\left(1+\frac{\gamma}{\delta}\right)e\right].
\end{align}
Therefore,
\begin{align*}
    \frac{d}{de}\left(x\left(Q_a(\nu)+Q_p(\nu)\right)\right)
    =\left(1+\frac{\gamma}{\delta}\right)\left[\frac{\frac{K\gamma\delta}{\gamma+\delta}\left(\lambda-\frac{K\gamma\delta}{\gamma+\delta}\right)}{\left[K\gamma-\left(1+\frac{\gamma}{\delta}\right)e\right]^2}+\left(\frac{\gamma}{\gamma+\delta}\right)^2\right] > 0,
\end{align*}
and
\begin{align*}
    \frac{d^2}{de^2}\left(x\left(Q_a(\nu)+Q_p(\nu)\right)\right)
    =\frac{2\left(1+\frac{\gamma}{\delta}\right)^2\frac{K\gamma\delta}{\gamma+\delta}\left(\lambda-\frac{K\gamma\delta}{\gamma+\delta}\right)}{\left[K\gamma-\left(1+\frac{\gamma}{\delta}\right)e\right]^3}>0.
\end{align*}
Both inequalities use $\lambda\geq\lambda p_0>K\gamma\delta/(\gamma+\delta)$. Hence, verification cost is strictly increasing and strictly convex in $e$.

The first two cost components of~\eqref{eq:reverify_cost} are affine in $e$, whereas the third component is strictly convex. Therefore, total cost~\eqref{eq:reverify_cost} is strictly convex in $e$. Moreover, \eqref{eq:xQtotal-e} implies $\mathrm{TC}\to\infty$, as $e\to K\gamma\delta/(\gamma+\delta)$. Continuity and strict convexity consequently imply that total cost has a unique minimizer on the stated interval. The strictly increasing relationship between $e$ and $\nu$ then gives a unique finite optimal verification frequency $\nu^*$. 

\medskip

\noindent\emph{Verification threshold.}
Rearranging~\eqref{eq:xQp-e} gives
\begin{align*}
    e=\delta\left(K-\frac{\lambda}{\gamma}\right)+\frac{\delta}{\gamma}xQ_p(\nu).
\end{align*}
Differentiating with respect to $x$ at $x=0$ yields
\begin{align*}
    \left.\frac{de}{dx}\right|_{x=0} = \frac{\delta}{\gamma}Q_p(0)>0.
\end{align*}
Since $U(\nu)=e/(K\delta)$, it also follows that
\begin{align*}
    \left.\frac{dU}{dx}\right|_{x=0} = \frac{Q_p(0)}{K\gamma}.
\end{align*}

Differentiating the three cost components at $x=0$ gives
\begin{align*}
    \left.\frac{dTC}{dx}\right|_{x=0}
    =-\left[s\left(1+\frac{\gamma}{\delta}\right)+\frac{r}{\delta}\right]\frac{\delta}{\gamma}Q_p(0)+\frac{c_\nu}{\phi}\left(Q_a(0)+Q_p(0)\right).
\end{align*}
Using
\begin{align*}
    Q_p(0) = \left(1-\alpha^*(0)\right)\left(Q_a(0)+Q_p(0)\right),
\end{align*}
we obtain
\begin{align*}
    \left.\frac{dTC}{dx}\right|_{x=0}=Q_p(0)\left[\frac{c_\nu}{\phi\left(1-\alpha^*(0)\right)}-s-\frac{r+s\delta}{\gamma}\right].
\end{align*}

Because total cost is strictly convex in $e$ and
$\left.de/dx\right|_{x=0}>0$, its minimizer is above the
lower endpoint if and only if this derivative is negative.
Thus,
\begin{align*}
    \nu^*>0 \quad\Longleftrightarrow\quad
    \phi>\frac{c_\nu}{\left(s+\frac{r+s\delta}{\gamma}\right)\left(1-\alpha^*(0)\right)}.
\end{align*}
Recalling $V:=s+(r+s\delta)/\gamma$, define
\begin{align*}
    \phi_{\min} := \frac{c_\nu}{V\left(1-\alpha^*(0)\right)}.
\end{align*}
Thus, $\nu^*>0$ if and only if $\phi>\phi_{\min}$.
If the inequality fails, including equality, strict
convexity implies that the unique optimum is $\nu^*=0$.
This proves the threshold result. The zero-verification
acceptance probability used here, from Theorem~\ref{thm:alpha_nu}, is given by 
\begin{align*}
    \alpha^*(0)=\frac{\delta(K\gamma-\lambda)}{\lambda\gamma}.
\end{align*}

\medskip
\noindent\emph{Comparative statics.}
Viewing total cost as a function of $e$, its derivative is
\begin{align*}
    \frac{dTC}{de} = -s\left(1+\frac{\gamma}{\delta}\right) - \frac{r}{\delta} + \frac{c_\nu}{\phi}\frac{d}{de}\left(x\left(Q_a(\nu)+Q_p(\nu)\right)\right).
\end{align*}
The equilibrium relationship between $e$ and $\nu$ is independent of $s$, $r$, and $c_\nu$. Therefore,
\begin{align*}
    \frac{\partial}{\partial s}\left(\frac{dTC}{de}\right)&=-\left(1+\frac{\gamma}{\delta}\right)<0,\\
    \frac{\partial}{\partial r}\left(\frac{dTC}{de}\right)&=-\frac{1}{\delta}<0,\\
    \frac{\partial}{\partial c_\nu}\left(\frac{dTC}{de}\right)&=\frac{1}{\phi}\frac{d}{de}\left(x\left(Q_a(\nu)+Q_p(\nu)\right)\right)
    >0.
\end{align*}
Since $dTC/de$ is strictly increasing in $e$, the unique minimizer of total cost is nondecreasing in $s$ and $r$ and nonincreasing in $c_\nu$.  Finally, the strictly increasing, cost-independent relationship between $e$ and $\nu$ transfers these comparative statics to $\nu^*$. The comparisons are strict while the optimum is positive.
\qed

\section{Proofs from Section~\ref{sec:pooling}}

\subsection{Proof of Lemma~\ref{lemma:transition}}

\noindent\emph{(i)}
The infant classroom is the model of \S\ref{subsec:impatience} with parameters $(K_I,\lambda_I,\delta_I)$. Theorem~\ref{thm:fixed_point} therefore gives a unique equilibrium acceptance probability $\alpha_I^*$. Then, infant enrollments occur at rate $\lambda_I\alpha_I^*$, including when $\alpha_I^*=0$. Since infant slot utilization is bounded, departures have the same long-run rate. Independent move-up decisions thus give $g=\sigma\lambda_I\alpha_I^*$, which establishes~\eqref{eq:grad_flow}.

\noindent\emph{(ii)}
Move-ups occupy a mean of $g/\delta_T$ toddler slots, so feasibility requires $0\leq g\leq K_T\delta_T$. Fix such a $g$ and an external acceptance probability $\alpha$. To derive the effective clearance rate (the rate at which external offers are resolved) $\mu_T(\alpha)$, suppose that the external waitlist is always nonempty. Every toddler slot is then either occupied or holds an outstanding external offer. Little's law gives $\mu_T(\alpha)/\gamma$ outstanding offers and $(g+\alpha\mu_T(\alpha))/\delta_T$ occupied slots on average. Hence,
\begin{align*}
    \frac{\mu_T(\alpha)}{\gamma} + \frac{g+\alpha\mu_T(\alpha)}{\delta_T} = K_T,
\end{align*}
and therefore,
\begin{align}\label{eq:mu-T}
    \mu_T(\alpha)
    =\frac{\gamma(K_T\delta_T-g)}{\alpha\gamma+\delta_T}
    =\frac{(K_T-g/\delta_T)\gamma\delta_T}
           {\alpha\gamma+\delta_T}.
\end{align}
Thus, the effective clearance rate has the form in~\eqref{eq:clearing_rate}, with effective capacity $K_T-g/\delta_T$.

Given that the external toddler waitlist has arrival rate $\lambda_T$ and clearance rate $\mu_T(\alpha)$ whenever nonempty, the mean waiting time is given by
\begin{align*}
    \mathbb{W}_T(\alpha)
    =\frac{1}{(\mu_T(\alpha)-\lambda_T)^+}
    =\frac{\alpha\gamma + \delta_T}{\left(\gamma\,(K_T\delta_T - g) - \alpha\gamma\lambda_T - \delta_T \lambda_T\right)^{+}},
\end{align*}
establishing~\eqref{eq:wait_T}. This is~\eqref{eq:alpha_wait} with $(K,\lambda,\delta)$ replaced by $(K_T-g/\delta_T,\lambda_T,\delta_T)$.

For $g<K_T\delta_T$, the proofs of Theorem~\ref{thm:fixed_point} and Lemma~\ref{lemma:alpha-star} apply with $(K,\lambda,\delta)$ replaced by $(K_T-g/\delta_T,\lambda_T,\delta_T)$, since capacity enters those arguments only through the clearance rate. Hence, the toddler fixed point is unique and satisfies
\begin{align*}
    \alpha_T^* = \frac{p_0}{1+\theta\mathbb{W}_T(\alpha_T^*)}.
\end{align*}
Define
\begin{align*}
    g_c:=\delta_T\left(K_T-\frac{\lambda_T}{\gamma}\right).
\end{align*}
If $g<g_c$, equivalently $\lambda_T<\mu_T(0)$, then
\begin{align*}
    0<\alpha_T^*<p_0,
    \qquad
    \lambda_T<\mu_T(\alpha_T^*),
    \qquad
    \mathbb{W}_T(\alpha_T^*)<\infty.
\end{align*}
If $g\geq g_c$, then $\alpha_T^*=0$ and $\mathbb{W}_T(\alpha_T^*)=\infty$. These conclusions also hold at $g=K_T\delta_T$, where $\mu_T(\alpha)\equiv0$.

As in part~(i), external toddler enrollments occur at rate $\lambda_T\alpha_T^*$. Total toddler enrollments therefore occur at rate $g+\lambda_T\alpha_T^*$. When this rate is positive, the fraction attributable to move-ups is
\begin{align*}
    \frac{g}{g+\lambda_T\alpha_T^*}.
\end{align*}
Little's law gives a mean of $(g+\lambda_T\alpha_T^*)/\delta_T$ occupied toddler slots and utilization
\begin{align*}
    U_T=\frac{g+\lambda_T\alpha_T^*}{K_T\delta_T},
\end{align*}
establishing the occupancy expressions in~\eqref{eq:U_T}.

It remains to establish the comparative statics in $g$. We consider two cases. 

\textbf{Case 1:} $g<g_c$. Write $\mu:=\mu_T(\alpha_T^*)>\lambda_T$. The fixed-point equation and waiting-time formula (Theorem~\ref{thm:fixed_point}) give
\begin{align*}
    \alpha_T^*=p_0T_T(\mu),
    \quad \mbox{ where } \ 
    T_T(\mu):=\frac{\mu-\lambda_T}{\mu-\lambda_T+\theta}.
\end{align*}
The function $T_T$ is continuous and strictly increasing, with $T_T(\mu)\to0$ as $\mu\downarrow\lambda_T$. Substituting into~\eqref{eq:mu-T} with $\mu_T(\alpha_T^*) = \mu$ and rearranging gives
\begin{align*}
    g=K_T\delta_T-\frac{\delta_T}{\gamma}\mu-p_0\mu T_T(\mu).
\end{align*}
The right-hand side is continuous and strictly decreasing in $\mu$, with limits $g_c$ as $\mu\downarrow\lambda_T$ and $-\infty$ as $\mu\to\infty$. Consequently, $\mu$ and $\alpha_T^*=p_0T_T(\mu)$ are continuous and strictly decreasing in $g$, while $\mathbb{W}_T(\alpha_T^*)=1/(\mu-\lambda_T)$ strictly increases.

Moreover,
\begin{align*}
    g+\lambda_T\alpha_T^* = K_T\delta_T-\frac{\delta_T}{\gamma}\mu - p_0(\mu-\lambda_T)T_T(\mu).
\end{align*}
Both subtracted terms strictly increase in $\mu>\lambda_T$. Thus, the total toddler enrollment rate, and hence slot utilization $U_T$, strictly increases in $g$.

\textbf{Case 2:} $g\geq g_c$. Here, $\alpha_T^*=0$ and $U_T=g/(K_T\delta_T)$, so utilization again strictly increases in $g$. If $g_c>0$, then as $g\uparrow g_c$, \eqref{eq:mu-T} forces $\mu\downarrow\lambda_T$ and $\alpha_T^*\downarrow0$, joining the two regimes continuously. If $g_c\leq0$, every feasible $g$ lies in the zero-acceptance regime. Therefore, $\alpha_T^*$ is continuous in $g$, strictly decreasing in the stable regime and zero thereafter, whereas total toddler enrollments and slot utilization $U_T$ are continuous and strictly increasing throughout the feasible range.
\qed

\subsection{Proof of Lemma~\ref{lemma:pooling_alpha}}

Throughout the proof, $f\geq0$ is treated as a continuous variable, and $\lambda_I<K_I\gamma$, $g(0)<K_T\delta_T$, and the external toddler wait is finite at $f=\hat f$, as assumed. Let
\begin{align*}
    \mu_I(f) := \frac{(K_I+f)\gamma\delta_I}{\alpha_I^*(f)\gamma+\delta_I},
    \qquad
    T_I(\mu):=\frac{\mu-\lambda_I}{\mu-\lambda_I+\theta},
    \quad \mu>\lambda_I.
\end{align*}
Under $\lambda_I<K_I\gamma$, by Lemma~\ref{lemma:alpha-star}, $\mu_I(f)>\lambda_I$ for every $f\geq0$. 

\noindent\emph{(i)}
The infant classroom has $K_I+f$ slots, so its fixed-point equation is
$\alpha_I^*(f)=p_0T_I(\mu_I(f))$. Rearranging the definition of $\mu_I(f)$ gives
\begin{align*}
    K_I+f = \frac{\mu_I(f)}{\gamma} + \frac{p_0\mu_I(f)T_I(\mu_I(f))}{\delta_I}.
\end{align*}
The right-hand side is continuous and strictly increasing in $\mu_I(f)$ on $(\lambda_I,\infty)$, with limits $\lambda_I/\gamma$ and $\infty$ at the two endpoints. Thus, $\mu_I(f)$ is continuous and strictly increasing in $f$. Consequently, $\alpha_I^*(f)$ is continuous and strictly increasing, while
\begin{align*}
    \mathbb{W}_I(f)=\frac{1}{\mu_I(f)-\lambda_I}
\end{align*}
is continuous and strictly decreasing. Moreover,~\eqref{eq:alpha_u} gives
\begin{align*}
    (K_I+f)U_I(f)
    =\frac{\lambda_I\alpha_I^*(f)}{\delta_I}.
\end{align*}
Hence, the expected number of occupied infant slots $(K_I+f) U_I(f)$ strictly increases in $f$.

\noindent\emph{(ii)} By~\eqref{eq:grad_flow}, move-ups leave the infant classroom at rate $g(f)=\sigma\lambda_I\alpha_I^*(f)$, so Lemma~\ref{lemma:transition}(ii) applies with $g$ replaced by $g(f)$ wherever $g(f)\leq K_T\delta_T$. By part~(i), $g(f)$ is continuous and nondecreasing in $f$, and strictly increasing if $\sigma>0$. By the proof of Lemma~\ref{lemma:transition}(ii), $\alpha_T^*$ is continuous in $g$, strictly decreasing while
\begin{align*}
    \frac{g}{\delta_T}+\frac{\lambda_T}{\gamma}<K_T,
\end{align*}
that is, while the external toddler wait is finite, and zero thereafter, and the toddler enrollment rate $g+\lambda_T\alpha_T^*$ strictly increases in $g$. Composing the two, $\alpha_T^*(f)$ is continuous and nonincreasing in $f$, and the toddler enrollment rate $g(f)+\lambda_T\alpha_T^*(f)$ is continuous and nondecreasing in $f$. If $\sigma>0$, then $U_T(f)=(g(f)+\lambda_T\alpha_T^*(f))/(K_T\delta_T)$ strictly increases in $f$, and $\alpha_T^*(f)$ strictly decreases in $f$ wherever the external toddler wait is finite, which, as shown at the end of the proof, includes all of $[0,\hat f]$.

\medskip
Finally, we show that~\eqref{eq:lendable} has a unique solution, so that the interval $[0,\hat f]$ in the statement is well defined, that $g(0)<K_T\delta_T$ is equivalent to $\hat f>0$, and that the external toddler wait is finite on all of $[0,\hat f]$. Let
\begin{equation*}
    \ell(f):=f+\frac{g(f)+\lambda_T\alpha_T^*(f)}{\delta_T},\qquad f\geq0,
\end{equation*}
the $f$ lent slots plus the mean number of occupied toddler slots, so that~\eqref{eq:lendable} reads $\ell(\hat f)=K_T$. The steady-state requirement $g(f)\leq K_T\delta_T$ holds on an interval containing zero, because $g(0)<K_T\delta_T$ and $g$ is continuous and nondecreasing; at the right endpoint of that interval, if there is one, $g(f)=K_T\delta_T$, so move-ups occupy all $K_T$ toddler slots and $\ell(f)\geq K_T$. On that interval, by part~(ii), the second term of $\ell$ is continuous and nondecreasing in $f$, so $\ell$ is continuous and strictly increasing, with $\ell(f)\geq f$. Moreover, $\ell(0)<K_T$: if the external toddler wait is finite at $f=0$, then $\lambda_T<\mu_T(\alpha_T^*(0))$ rearranges to $\ell(0)+\lambda_T/\gamma<K_T$, and otherwise $\alpha_T^*(0)=0$ and $\ell(0)=g(0)/\delta_T<K_T$. The intermediate value theorem therefore yields a solution $\hat f\in(0,K_T]$ of $\ell(\hat f)=K_T$, strict monotonicity makes it unique, and $\ell(f)\leq K_T$ precisely when $f\leq\hat f$. Conversely, if $g(0)\geq K_T\delta_T$, then $g(f)\geq K_T\delta_T$ for every $f\geq0$, so move-ups alone would occupy all $K_T$ toddler slots and no $f>0$ satisfies $\ell(f)\leq K_T$. Hence, $g(0)<K_T\delta_T$ is equivalent to $\hat f>0$.

The external toddler wait is finite at $f=\hat f$ whenever $\hat f>\lambda_T/\gamma$: were it infinite, the condition of Case~2 in the proof of Lemma~\ref{lemma:transition} would hold at $g=g(\hat f)$, because the wait is finite in Case~1, so that $\alpha_T^*(\hat f)=0$ and $g(\hat f)/\delta_T+\lambda_T/\gamma\geq K_T$, and $\ell(\hat f)=K_T$ would read $\hat f=K_T-g(\hat f)/\delta_T\leq\lambda_T/\gamma$. Because that wait is finite, the condition of Case~1 in the same proof holds at $g=g(\hat f)$ and, because $g$ is nondecreasing, at every $f\in[0,\hat f]$, so the external toddler wait is finite on all of $[0,\hat f]$.
\qed

\subsection{Proof of Proposition~\ref{prop:optimal_pooling}}

Let  $\check f := K_I - \lambda_I\alpha_I^*(0)/\delta_I$ be the maximum number of infant slots that can be lent to the toddler classroom. 

Throughout the proof, $f$ is treated as a continuous variable on $[-\check f,\hat f]$, and $\lambda_I<K_I\gamma$, $g(0)<K_T\delta_T$, and the external toddler wait is finite at $f=\hat f$, as assumed, so that, by the proof of Lemma~\ref{lemma:pooling_alpha}, $\hat f>0$ and the external toddler wait is finite on all of $[0,\hat f]$. 
Assume also that $C$ is continuous and nondecreasing on the feasible interval and strictly convex on $[0,\hat f]$. 
By~\eqref{eq:alpha_u}, $\check{f}=K_I(1-U_I(0))>0$. 
For $f<0$, set $\alpha_I^*(f):=\alpha_I^*(0)$ and $g(f):=g(0)$, and evaluate $\alpha_T^*(f)$ using toddler capacity $K_T-f$. These definitions agree with the positive branch at zero. The toddler wait is finite throughout the feasible interval: on $[0,\hat f]$, move-up flow is no larger than at $\hat f$; on $[-\check f,0]$, it is fixed while toddler capacity increases.

We first establish a useful property of the acceptance probabilities. For either classroom $j\in\{I,T\}$, combining the fixed-point and clearance identities expresses effective capacity as
\begin{align*}
    \left(\lambda_j+\frac{\theta\alpha}{p_0-\alpha}\right)\left(\frac{1}{\gamma}+\frac{\alpha}{\delta_j}\right),
    \qquad 0<\alpha<p_0.
\end{align*}
This expression has a positive first derivative and the strictly positive second derivative with respect to $\alpha$ 
\begin{align*}
    \frac{2\theta p_0(1/\gamma+p_0/\delta_j)}{(p_0-\alpha)^3}>0.
\end{align*}
Hence, its inverse, the equilibrium acceptance probability, is continuous, strictly increasing, and strictly concave in effective capacity.

Write $E(f):=g(f)+\lambda_T\alpha_T^*(f)$ for the total toddler enrollment rate. Substituting~\eqref{eq:grad_flow} into~\eqref{eq:pooling_objective} gives
\begin{align}\label{eq:TC-alt}
    \mathrm{TC}(f) = s(\lambda_I+\lambda_T) + r(K_I+K_T) - (V_I-\sigma V_T)\lambda_I\alpha_I^*(f) - V_T E(f) + C(f),
\end{align}
where
\begin{align*}
    V_I-\sigma V_T=s(1-\sigma)+\frac{r}{\delta_I}\geq0.
\end{align*}

For $f\leq0$, the infant acceptance probability and move-up flow are constant, while effective toddler capacity is $K_T-f-g(0)/\delta_T$. Thus, $\alpha_T^*(f)$ and $E(f)$ decrease continuously and strictly as $f$ increases. Since $V_T>0$ and $C$ is nondecreasing, $\mathrm{TC}$ is continuous and strictly increasing on $[-\check f,0]$. Consequently, $-\check f$ is its unique minimizer on this interval, and $\mathrm{TC}(-\check f)<\mathrm{TC}(0)$.

For $f\geq0$, infant capacity is $K_I+f$, so $\alpha_I^*(f)$ is strictly increasing and strictly concave. Hence, $g(f)=\sigma\lambda_I\alpha_I^*(f)$ is concave and nondecreasing. Effective toddler capacity is $K_T-g/\delta_T$, so toddler acceptance is concave in $g$. The total toddler enrollment rate $g+\lambda_T\alpha_T^*$ is therefore concave in $g$ and, by Lemma~\ref{lemma:transition}, increasing in $g$. Composing it with $g(f)$ shows that $E(f)$ is concave in $f$.

The expression for $\mathrm{TC}$ in~\eqref{eq:TC-alt} is consequently a sum of convex terms on $[0,\hat f]$, including the strictly convex function $C$. Thus, $\mathrm{TC}$ is strictly convex there and has a unique minimizer, denoted by $f_+$.

The objective is continuous on the compact feasible interval, so a global optimum exists. Every global optimum equals either $-\check f$ or $f_+$, whichever has lower cost; both are optimal if their costs coincide. Since $\mathrm{TC}(-\check f)<\mathrm{TC}(0)$, any globally optimal $f_+$ must be strictly positive.

\noindent\emph{(i)}
Differentiating from the right at zero gives
\begin{align*}
    \mathrm{TC}'(0+) = C'(0) - \lambda_I\alpha_I^{*\prime}(0)V_I - \lambda_T\alpha_T^{*\prime}(0)V_T,
\end{align*}
where all derivatives at zero are right derivatives. Strict convexity on $[0,\hat f]$ implies
\begin{align*}
    f_+>0
    \quad\Longleftrightarrow\quad
    \lambda_I\alpha_I^{*\prime}(0)V_I
    +\lambda_T\alpha_T^{*\prime}(0)V_T>C'(0).
\end{align*}
Thus, \eqref{eq:pooling_threshold} is necessary for a positive global optimum. If it fails, $f_+=0$ and the unique global optimum is $-\check f$. If it holds, global optimality is determined by comparing $\mathrm{TC}(f_+)$ with $\mathrm{TC}(-\check f)$.

\noindent\emph{(ii)}
The feasible interval is independent of $s$, $r$, and $C$. On $[0,\hat f]$, the coefficients of $s$ and $r$ in the objective are, respectively,
\begin{align*}
\begin{aligned}
    &\lambda_I+\lambda_T -(1-\sigma)\lambda_I\alpha_I^*(f)-E(f),\\
    &K_I+K_T-\frac{\lambda_I\alpha_I^*(f)}{\delta_I}-\frac{E(f)}{\delta_T}.
\end{aligned}
\end{align*}
Both strictly decrease, meaning that infant enrollments strictly increase, while toddler enrollments are nondecreasing and strictly increase when $\sigma>0$.

Increasing either $s$ or $r$ therefore adds a strictly decreasing function of $f$ to the objective. This cannot move its unique minimizer to the left: any smaller choice had strictly higher cost before the change and receives a larger cost increase. Hence, $f_+$ is nondecreasing in $s$ and $r$.

Finally, if two admissible flexibility costs satisfy $C_2'\geq C_1'$ on $(0,\hat{f})$, then $C_2-C_1$ is nondecreasing on $[0,\hat{f}]$. Adding this difference cannot move the unique minimizer to the right: any larger choice had strictly higher cost under $C_1$ and receives a weakly larger cost change. Thus, $f_+$ is nonincreasing in marginal flexibility cost.
\qed

\end{document}